\documentclass[12pt]{amsart}
\usepackage{amsmath,amssymb,amsbsy,amsfonts,amsthm,latexsym,amsopn,amstext,amsxtra,euscript,amscd,mathrsfs,color,bm}
\usepackage{float} 
\usepackage[english]{babel}
\usepackage{mathtools}
\usepackage[textsize=tiny]{todonotes}
\usepackage{url}
\usepackage[colorlinks,linkcolor=blue,anchorcolor=blue,citecolor=blue,backref=page]{hyperref}
\usepackage{nicefrac}
\usepackage{enumitem}

\usepackage[a4paper]{geometry}

\usepackage[norefs,nocites]{refcheck}
\usepackage[norefs,nocites]{refcheck}
     
  \usepackage{soul}
 \geometry{margin=20mm}

\begin{document}
%
%\renewcommand*{\backref}[1]{}
%\renewcommand*{\backrefalt}[4]{%
%    \ifcase #1 (Not cited.)%
%    \or        (p.\,#2)%
%    \else      (pp.\,#2)%
%    \fi}     
    
%%%%%%%%%%%%%%%%%%%%%%%%%%%%%%%%%%%%%%%%%%%%%%%%%%%%%%%%%%%%%%%%%%%%%%
\newtheorem{theorem}{Theorem}
\newtheorem{lemma}[theorem]{Lemma}
\newtheorem{example}[theorem]{Example}
\newtheorem{algol}{Algorithm}
\newtheorem{corollary}[theorem]{Corollary}
\newtheorem{prop}[theorem]{Proposition}
\newtheorem{proposition}[theorem]{Proposition}
\newtheorem{problem}[theorem]{Problem}
\newtheorem{conj}[theorem]{Conjecture}

\theoremstyle{remark}
\newtheorem{definition}[theorem]{Definition}
\newtheorem{question}[theorem]{Question}
\newtheorem*{acknowledgement}{Acknowledgements}

\newtheorem*{Thm*}{Theorem}
\newtheorem{Thm}{Theorem}[section]
\renewcommand*{\theThm}{\Alph{Thm}}

\numberwithin{equation}{section}
\numberwithin{theorem}{section}
\numberwithin{table}{section}
\numberwithin{figure}{section}

\allowdisplaybreaks

%%%%%%%%%%%%%%%%%%%%%%%%%%%%%%%%%%%%%%%%%%%%%%%%%%%%%%%%%%%%%%%%%%%%%%
\definecolor{olive}{rgb}{0.3, 0.4, .1}
\definecolor{dgreen}{rgb}{0.,0.5,0.}

\def\cc#1{\textcolor{red}{#1}} 

\definecolor{dgreen}{rgb}{0.,0.6,0.}
\def\tgreen#1{\begin{color}{dgreen}{\it{#1}}\end{color}}
\def\tblue#1{\begin{color}{blue}{\it{#1}}\end{color}}
\def\tred#1{\begin{color}{red}#1\end{color}}
\def\tmagenta#1{\begin{color}{magenta}{\it{#1}}\end{color}}
\def\tNavyBlue#1{\begin{color}{NavyBlue}{\it{#1}}\end{color}}
\def\tMaroon#1{\begin{color}{Maroon}{\it{#1}}\end{color}}

\def\ccr#1{\textcolor{red}{#1}}
\def\ccm#1{\textcolor{magenta}{#1}}
\def\cco#1{\textcolor{orange}{#1}}

%\newcommand{ \commA}[1]{\marginpar{%
%\begin{color}{blue}
%\vskip-\baselineskip %raise the marginpar a bit
%\raggedright\footnotesize
%\itshape\hrule \smallskip AO: #1\par\smallskip\hrule\end{color}}\ignorespaces}
%
%\newcommand{ \commI}[1]{\marginpar{%
%\begin{color}{magenta}
%\vskip-\baselineskip %raise the marginpar a bit
%\raggedright\footnotesize
%\itshape\hrule \smallskip IS: #1\par\smallskip\hrule\end{color}}\ignorespaces}

%%%%%%%%%%%%%%%%%%%%%%%%%%%%%%%%%%%%%%%%%%%%%%%%%%%%%%%%%%%%%%%%%%%%%%

%%%%%%%%%%%%%%%%%%%%%%%%%%%%%%%%%%%%%%%%%%%%%%%%%%%%%%%%%%%%%%%%%%%%%%
%% %% Macros that are not used or that were defined twice
%% \def\xxx{\vskip5pt\hrule\vskip5pt}
%% \def\Cmt#1{\underline{{\sl Comments:}} {\it{#1}}}
%% \newcommand{\Modp}[1]{
%% \begin{color}{blue}
%%  #1\end{color}}
%% \def\bl#1{\begin{color}{blue}#1\end{color}} 
%% \def\red#1{\begin{color}{red}#1\end{color}} 
%\newcommand{\eqname}[1]{\tag{#1}}% Tag equation with name
%% JS: It is an abomination to redefine \(,\),\[,\]. They are STANDARD TeX macros
%% for delineating equations!!!
%% \def\({\left(}
%% \def\){\right)}
%% \def\[{\left[}
%% \def\]{\right]}
%% \def\gen#1{{\left\langle#1\right\rangle}}
%% \def\genp#1{{\left\langle#1\right\rangle}_p}
%% \def\genPs{{\left\langle P_1, \ldots, P_s\right\rangle}}
%% \def\genPsp{{\left\langle P_1, \ldots, P_s\right\rangle}_p}
%% \def\eq{\e_q}
%% \def\fh{{\mathfrak h}}
%% \def\fl#1{\left\lfloor#1\right\rfloor}
%% \def\rf#1{\left\lceil#1\right\rceil}
 \def\mand{\qquad\mbox{and}\qquad}
%% \def\jt{\tilde\jmath}
%% \def\ellmax{\ell_{\rm max}}
%% \def\rank#1{\mathrm{rank}#1} 
%% \def\m{{\rm m}}
%% \def\ch{\hat{h}}
%% \def\GL{{\rm GL}}
%% \def\Orb{\operatorname{Orb}}
%% \def\Per{\operatorname{Per}}
%% \def\Preper{\operatorname{Preper}}
%% \def\PGL{\operatorname{PGL}}
%% \def\tors{\operatorname{tors}}
%% \def\Gal{{\operatorname{Gal}}}
%% \newcommand{\Ch}{{\operatorname{Ch}}}
%% \newcommand{\Elim}{{\operatorname{Elim}}}
%% \newcommand{\proj}{{\operatorname{proj}}}
%% \newcommand{\h}{{\operatorname{\mathrm{h}}}}
%% \newcommand{\hh}{\mathrm{h}}
%% \newcommand{\bfalpha}{{\boldsymbol{\alpha}}}
%% \newcommand{\bfomega}{{\boldsymbol{\omega}}}
%%%%%%%%%%%%%%%%%%%%%%%%%%%%%%%%%%%%%%%%%%%%%%%%%%%%%%%%%%%%%%%%%%%%%%

%%%%%%%%%%%%%%%%%%%%%%%%%
% Alphabet calligraphic %
%%%%%%%%%%%%%%%%%%%%%%%%%
\def\cA{{\mathcal A}}
\def\cB{{\mathcal B}}
\def\cC{{\mathcal C}}
\def\cS{{\mathcal D}}
\def\cH{{\mathcal H}}
\def\cI{{\mathcal I}}
\def\cJ{{\mathcal J}}
\def\cK{{\mathcal K}}
\def\cL{{\mathcal L}}
\def\cM{{\mathcal M}}
\def\cN{{\mathcal N}}
\def\cO{{\mathcal O}}
\def\cP{{\mathcal P}}
\def\cQ{{\mathcal Q}}
\def\cR{{\mathcal R}}
\def\cS{{\mathcal S}}
\def\cT{{\mathcal T}}
\def\cU{{\mathcal U}}
\def\cV{{\mathcal V}}
\def\cW{{\mathcal W}}
\def\cX{{\mathcal X}}
\def\cY{{\mathcal Y}}
\def\cZ{{\mathcal Z}}

\def\C{\mathbb{C}}
\def\F{\mathbb{F}}
\def\K{\mathbb{K}}
\def\Z{\mathbb{Z}}
\def\R{\mathbb{R}}
\def\Q{\mathbb{Q}}
\def\N{\mathbb{N}}
\def\M{\mathrm{M}}
\def\L{\mathbb{L}}
\def\M{{\normalfont\textsf{M}}} %%% MODIFIED BY DRP
\def\U{\mathbb{U}}
\def\P{\mathbb{P}}
\def\A{\mathbb{A}}
\def\fp{\mathfrak{p}}
\def\fq{\mathfrak{q}}
\def\n{\mathfrak{n}}
\def\X{\mathcal{X}}
\def\x{\textrm{\bf x}}
\def\w{\textrm{\bf w}}
\def\ovQ{\overline{\Q}}
\def \Kab{\K^{\mathrm{ab}}}
\def \Qab{\Q^{\mathrm{ab}}}
\def \Qtr{\Q^{\mathrm{tr}}}
\def \Kc{\K^{\mathrm{c}}}
\def \Qc{\Q^{\mathrm{c}}}
\def\ZK{\Z_\K}
\def\ZKS{\Z_{\K,\cS}}
\def\ZKSf{\Z_{\K,\cS_{f}}}
\def\RSf{R_{\cS_{f}}}
\def\RTf{R_{\cT_{f}}}
\def \wH{{\mathrm H}}

\def\S{\mathcal{S}}
\def\vec#1{\mathbf{#1}}
\def\ov#1{{\overline{#1}}}
\def\sign{{\operatorname{sign}}}
\def\Vol{{\operatorname{Vol}}}
\def\Gm{\G_{\textup{m}}}
\def\fA{{\mathfrak A}}
\def\fB{{\mathfrak B}}

\def \GL{\mathrm{GL}}
\def \Mat{\mathrm{Mat}}

\def\house#1{{%
    \setbox0=\hbox{$#1$}
    \vrule height \dimexpr\ht0+1.4pt width .5pt depth \dp0\relax
    \vrule height \dimexpr\ht0+1.4pt width \dimexpr\wd0+2pt depth \dimexpr-\ht0-1pt\relax
    \llap{$#1$\kern1pt}
    \vrule height \dimexpr\ht0+1.4pt width .5pt depth \dp0\relax}}

%%%%%%%%%%%%%%%%%%%%%%%%%%%%%%%%%%%%%%%%%%%%%%%%%%%%%%%%%%%%%%%%%%%%%%

%%%%%%%% Set Up Environment for Notation %%%%%%%%%%%%%%
% This is currently set to allow quite wide items to be defined
\newenvironment{notation}[0]{%
  \begin{list}%
    {}%
    {\setlength{\itemindent}{0pt}
     \setlength{\labelwidth}{1\parindent}
     \setlength{\labelsep}{\parindent}
     \setlength{\leftmargin}{2\parindent}
     \setlength{\itemsep}{0pt}
     }%
   }%
  {\end{list}}

%%%%%%%% Set Up Environment for Parts in Theorems %%%%%%%%%%%%%%
\newenvironment{parts}[0]{%
  \begin{list}{}%
    {\setlength{\itemindent}{0pt}
     \setlength{\labelwidth}{1.5\parindent}
     \setlength{\labelsep}{.5\parindent}
     \setlength{\leftmargin}{2\parindent}
     \setlength{\itemsep}{0pt}
     }%
   }%
  {\end{list}}
% Use \Part{(a)}, instead of \item[(a)], to ensure upright font
\newcommand{\Part}[1]{\item[\upshape#1]}

%%%%%%%% Set Up Macro for Cases %%%%%%%%%%%%%%
\def\Case#1#2{%
\smallskip\paragraph{\textbf{\boldmath Case #1: #2.}}\hfil\break\ignorespaces}

\def\Subcase#1#2{%
\smallskip\paragraph{\textit{\boldmath Subcase #1: #2.}}\hfil\break\ignorespaces}

%%%%%%%%%%%%%%%%%%
% Greek Alphabet %
%%%%%%%%%%%%%%%%%%
\renewcommand{\a}{\alpha}
\renewcommand{\b}{\beta}
\newcommand{\g}{\gamma}
\newcommand{\bnu}{\bm{\nu}}
\newcommand{\bk}{\bm{k}}
\renewcommand{\d}{\Delta}
\newcommand{\e}{\epsilon}
\newcommand{\f}{\varphi}
\newcommand{\fhat}{\hat\varphi}
\newcommand{\bfphi}{{\boldsymbol{\f}}}
\renewcommand{\l}{\lambda}
\renewcommand{\k}{\kappa}
\newcommand{\lhat}{\hat\lambda}
\newcommand{\bfmu}{{\boldsymbol{\mu}}}
\renewcommand{\o}{\omega}
\renewcommand{\r}{\rho}
\newcommand{\rbar}{{\bar\rho}}
\newcommand{\s}{\sigma}
\newcommand{\sbar}{{\bar\sigma}}
\renewcommand{\t}{\tau}
\newcommand{\z}{\zeta}

\newcommand{\balpha}{\bm{\alpha}}
\newcommand{\bone}{\bm{1}}
\newcommand{\bv}{\bm{v}}
\newcommand{\bH}{\bm{H}}

%%%%%%%%%%%%%%%%%%%%
% Fraktur Alphabet %
%%%%%%%%%%%%%%%%%%%%
\newcommand{\ga}{{\mathfrak{a}}}
\newcommand{\gb}{{\mathfrak{b}}}
\newcommand{\gn}{{\mathfrak{n}}}
\newcommand{\gp}{{\mathfrak{p}}}
\newcommand{\gP}{{\mathfrak{P}}}
\newcommand{\gq}{{\mathfrak{q}}}
\newcommand{\h}{{\mathfrak{h}}}
%%%%%%%%%%%%%%%%%%%
% Barred Alphabet %
%%%%%%%%%%%%%%%%%%%
\newcommand{\Abar}{{\bar A}}
\newcommand{\Ebar}{{\bar E}}
\newcommand{\kbar}{{\bar k}}
\newcommand{\Kbar}{{\bar K}}
\newcommand{\Pbar}{{\bar P}}
\newcommand{\Sbar}{{\bar S}}
\newcommand{\Tbar}{{\bar T}}
\newcommand{\gbar}{{\bar\gamma}}
\newcommand{\lbar}{{\bar\lambda}}
\newcommand{\ybar}{{\bar y}}
\newcommand{\phibar}{{\bar\f}}

%%%%%%%%%%%%%%%%%%%%%%%%%
% Calligraphic Alphabet %
%%%%%%%%%%%%%%%%%%%%%%%%%
\newcommand{\Acal}{{\mathcal A}}
\newcommand{\Bcal}{{\mathcal B}}
\newcommand{\Ccal}{{\mathcal C}}
\newcommand{\Dcal}{{\mathcal D}}
\newcommand{\Ecal}{{\mathcal E}}
\newcommand{\Fcal}{{\mathcal F}}
\newcommand{\Gcal}{{\mathcal G}}
\newcommand{\Hcal}{{\mathcal H}}
\newcommand{\Ical}{{\mathcal I}}
\newcommand{\Jcal}{{\mathcal J}}
\newcommand{\Kcal}{{\mathcal K}}
\newcommand{\Lcal}{{\mathcal L}}
\newcommand{\Mcal}{{\mathcal M}}
\newcommand{\Ncal}{{\mathcal N}}
\newcommand{\Ocal}{{\mathcal O}}
\newcommand{\Pcal}{{\mathcal P}}
\newcommand{\Qcal}{{\mathcal Q}}
\newcommand{\Rcal}{{\mathcal R}}
\newcommand{\Scal}{{\mathcal S}}
\newcommand{\Tcal}{{\mathcal T}}
\newcommand{\Ucal}{{\mathcal U}}
\newcommand{\Vcal}{{\mathcal V}}
\newcommand{\Wcal}{{\mathcal W}}
\newcommand{\Xcal}{{\mathcal X}}
\newcommand{\Ycal}{{\mathcal Y}}
\newcommand{\Zcal}{{\mathcal Z}}

%%%%%%%%%%%%%%%%%%%%%%%%%%%%
% Blackboard Bold Alphabet %
%%%%%%%%%%%%%%%%%%%%%%%%%%%%
\renewcommand{\AA}{\mathbb{A}}
\newcommand{\BB}{\mathbb{B}}
\newcommand{\CC}{\mathbb{C}}
\newcommand{\FF}{\mathbb{F}}
\newcommand{\G}{\mathbb{G}}
\newcommand{\KK}{\mathbb{K}}
\newcommand{\NN}{\mathbb{N}}
\newcommand{\PP}{\mathbb{P}}
\newcommand{\QQ}{\mathbb{Q}}
\newcommand{\RR}{\mathbb{R}}
\newcommand{\ZZ}{\mathbb{Z}}

%%%%%%%%%%%%%%%%%%%%%%%%%%
% Boldface Math Alphabet %
%%%%%%%%%%%%%%%%%%%%%%%%%%
\newcommand{\bfa}{{\boldsymbol a}}
\newcommand{\bfb}{{\boldsymbol b}}
\newcommand{\bfc}{{\boldsymbol c}}
\newcommand{\bfd}{{\boldsymbol d}}
\newcommand{\bfe}{{\boldsymbol e}}
\newcommand{\bff}{{\boldsymbol f}}
\newcommand{\bfg}{{\boldsymbol g}}
\newcommand{\bfi}{{\boldsymbol i}}
\newcommand{\bfj}{{\boldsymbol j}}
\newcommand{\bfk}{{\boldsymbol k}}
\newcommand{\bfm}{{\boldsymbol m}}
\newcommand{\bfp}{{\boldsymbol p}}
\newcommand{\bfr}{{\boldsymbol r}}
\newcommand{\bfs}{{\boldsymbol s}}
\newcommand{\bft}{{\boldsymbol t}}
\newcommand{\bfu}{{\boldsymbol u}}
\newcommand{\bfv}{{\boldsymbol v}}
\newcommand{\bfw}{{\boldsymbol w}}
\newcommand{\bfx}{{\boldsymbol x}}
\newcommand{\bfy}{{\boldsymbol y}}
\newcommand{\bfz}{{\boldsymbol z}}
\newcommand{\bfA}{{\boldsymbol A}}
\newcommand{\bfF}{{\boldsymbol F}}
\newcommand{\bfB}{{\boldsymbol B}}
\newcommand{\bfD}{{\boldsymbol D}}
\newcommand{\bfG}{{\boldsymbol G}}
\newcommand{\bfI}{{\boldsymbol I}}
\newcommand{\bfM}{{\boldsymbol M}}
\newcommand{\bfP}{{\boldsymbol P}}
\newcommand{\bfX}{{\boldsymbol X}}
\newcommand{\bfY}{{\boldsymbol Y}}
\newcommand{\bfzero}{{\boldsymbol{0}}}
\newcommand{\bfone}{{\boldsymbol{1}}}

%%%%%%%%%%%%%%%%%%%%%%%%%%%%%%
% Miscellaneous New Commands %
%%%%%%%%%%%%%%%%%%%%%%%%%%%%%%
\newcommand{\aff}{{\textup{aff}}}
\newcommand{\Aut}{\operatorname{Aut}}
\newcommand{\Berk}{{\textup{Berk}}}
\newcommand{\Birat}{\operatorname{Birat}}
\newcommand{\characteristic}{\operatorname{char}}
\newcommand{\codim}{\operatorname{codim}}
\newcommand{\Crit}{\operatorname{Crit}}
\newcommand{\critwt}{\operatorname{critwt}} % valency of a portrait
\newcommand{\cond}{\operatorname{cond}}
\newcommand{\Cycle}{\operatorname{Cycles}}
\newcommand{\diag}{\operatorname{diag}}
\newcommand{\Disc}{\operatorname{Disc}}
\newcommand{\Div}{\operatorname{Div}}
\newcommand{\Dom}{\operatorname{Dom}}
\newcommand{\End}{\operatorname{End}}
\newcommand{\ExtOrbit}{\mathcal{EO}} %% Extended orbit
\newcommand{\Fbar}{{\bar{F}}}
\newcommand{\Fix}{\operatorname{Fix}}
\newcommand{\FOD}{\operatorname{FOD}}
\newcommand{\FOM}{\operatorname{FOM}}
\newcommand{\Gal}{\operatorname{Gal}}
\newcommand{\genus}{\operatorname{genus}}
\newcommand{\GITQuot}{/\!/}
\newcommand{\GR}{\operatorname{\mathcal{G\!R}}}
\newcommand{\Hom}{\operatorname{Hom}}
\newcommand{\Index}{\operatorname{Index}}
\newcommand{\Image}{\operatorname{Image}}
\newcommand{\Isom}{\operatorname{Isom}}
\newcommand{\hhat}{{\hat h}}
\newcommand{\Ker}{{\operatorname{ker}}}
\newcommand{\Ksep}{K^{\textup{sep}}}  %% separable closure of K
\newcommand{\lcm}{{\operatorname{lcm}}}
\newcommand{\LCM}{{\operatorname{LCM}}}
\newcommand{\Lift}{\operatorname{Lift}}
\newcommand{\limstar}{\lim\nolimits^*}
\newcommand{\limstarn}{\lim_{\hidewidth n\to\infty\hidewidth}{\!}^*{\,}}
\newcommand{\llog}{\log\log}
\newcommand{\logplus}{\log^{\scriptscriptstyle+}}
\newcommand{\maxplus}{\operatornamewithlimits{\textup{max}^{\scriptscriptstyle+}}}
\newcommand{\MOD}[1]{~(\textup{mod}~#1)}
\newcommand{\Mor}{\operatorname{Mor}}
\newcommand{\Moduli}{\mathcal{M}}
\newcommand{\Norm}{{\operatorname{\mathsf{N}}}}
\newcommand{\notdivide}{\nmid}
\newcommand{\normalsubgroup}{\triangleleft}
\newcommand{\NS}{\operatorname{NS}}
\newcommand{\onto}{\twoheadrightarrow}
\newcommand{\ord}{\operatorname{ord}}
\newcommand{\Orbit}{\mathcal{O}}
\newcommand{\Per}{\operatorname{Per}}
\newcommand{\Perp}{\operatorname{Perp}}
\newcommand{\PrePer}{\operatorname{PrePer}}
\newcommand{\PGL}{\operatorname{PGL}}
\newcommand{\Pic}{\operatorname{Pic}}
\newcommand{\Prob}{\operatorname{Prob}}
\newcommand{\Proj}{\operatorname{Proj}}
\newcommand{\Qbar}{{\bar{\QQ}}}
\newcommand{\rank}{\operatorname{rank}}
\newcommand{\rad}{\operatorname{rad}}
\newcommand{\Rat}{\operatorname{Rat}}
\newcommand{\Res}{{\operatorname{Res}}}
\newcommand{\Resultant}{\operatorname{Res}}
\renewcommand{\setminus}{\smallsetminus}
\newcommand{\sgn}{\operatorname{sgn}}
\newcommand{\SL}{\operatorname{SL}}
\newcommand{\Span}{\operatorname{Span}}
\newcommand{\Spec}{\operatorname{Spec}}
\renewcommand{\ss}{{\textup{ss}}}
\newcommand{\stab}{{\textup{stab}}}
\newcommand{\Stab}{\operatorname{Stab}}
\newcommand{\Support}{\operatorname{Supp}}
\newcommand{\Sym}{\operatorname{Sym}}  %% Symmetric group
\newcommand{\tors}{{\textup{tors}}}
\newcommand{\Trace}{\operatorname{Trace}}
\newcommand{\trianglebin}{\mathbin{\triangle}} % symmetric set difference
\newcommand{\tr}{{\textup{tr}}} % for K/k trace
\newcommand{\UHP}{{\mathfrak{h}}}    % Upper half plane
\newcommand{\Wander}{\operatorname{Wander}}
\newcommand{\<}{\langle}
\renewcommand{\>}{\rangle}

\newcommand{\pmodintext}[1]{~\textup{(mod}~#1\textup{)}}
\newcommand{\ds}{\displaystyle}
\newcommand{\longhookrightarrow}{\lhook\joinrel\longrightarrow}
\newcommand{\longonto}{\relbar\joinrel\twoheadrightarrow}
\newcommand{\SmallMatrix}[1]{%
  \left(\begin{smallmatrix} #1 \end{smallmatrix}\right)}
  
  \def\({\left(}
\def\){\right)}

\hypersetup{breaklinks=true}
%\def\[{\left[}
%\def\]{\right]}
%\def\<{\langle}
%\def\>{\rangle}

  %%%%%%%%%%%%%%%%%%%%%%%%%%%%%%%%%%

\title[Multiplicatively dependent vectors on a hyperplane]
{Multiplicatively dependent integer vectors on a hyperplane}
\author[M. Afifurrahman, V. Iverson, G. C. Sanjaya]{Muhammad Afifurrahman, Valentio Iverson, Gian Cordana Sanjaya}

\address[MA]{School of Mathematics and Statistics, University of New South Wales, Sydney NSW 2052, Australia}
\email{m.afifurrahman@unsw.edu.au}
\address[VI]{Department of Pure Mathematics, University of Waterloo, Waterloo, ON, Canada}
\email{viverson@uwaterloo.ca}
\address[GCS]{Department of Pure Mathematics, University of Waterloo, Waterloo, ON, Canada}
\email{gcsanjaya@uwaterloo.ca}
\subjclass[2020]{11N25 (primary), 11D72}

\keywords{multiplicatively dependent, integer vectors, naive height, hyperplane, integer points, affine curves} 
\thanks{}
\begin{abstract}
We establish several asymptotic formulae and upper bounds for the count of multiplicatively dependent integer vectors that lie on a fixed hyperplane and have bounded height. This work constitutes a direct extension of the results obtained by Pappalardi, Sha, Shparlinski, and Stewart.
\end{abstract}

\maketitle

\tableofcontents

\section{Introduction}\label{sec:intro}
\subsection{Multiplicatively dependent vectors}

Let $G$ be a multiplicative group and $n$ be a positive integer. A  vector $\bm{\nu}=(\nu_1,\dotsc,\nu_n)\in G^n$ is \textit{multiplicatively dependent} if there exists a nonzero vector $\bk=(k_1,\dotsc,k_n)\in \Z^n$ with \begin{align}\label{eqn:multdep}
    \nu_1^{k_1}\dotsc \nu_n^{k_n}=1.
\end{align}
We denote $\cM_n(G)$ as the set of multiplicatively dependent vectors in $G^n$.

We restrict our attention to the set of multiplicatively dependent vectors of algebraic numbers, $\cM_n(\overline{\Q}^*)$. There have been many studies in this set, in particular regarding the height of the vector $\bnu$ and the exponent vector $\bk$. With respect to the exponent vector, Loxton and van der Poorten~\cite{LvdP,vdPL}, Matveev~\cite{Matveev}, and Loher and Masser~\cite[Corollary~3.2]{LM} (attributed to K.~Yu) proved that for a fixed multiplicatively dependent vector $\bnu$, there exists an exponent vector $\bk$ of small height that satisfies~\eqref{eqn:multdep}. The exact statement of this result is reproduced in Lemma~\ref{lem:exp}, due to~\cite[Theorem~1]{vdPL}.
In addition, Stewart~\cite[Theorem~1]{Stewart} gave an inequality for the heights of the coordinates of such $\bnu$ with a low multiplicative rank. A lower bound for the sum of the heights of the coordinates is implied in Vaaler~\cite{Vaaler}.

The central motivation for this work lies in the study of arithmetic statistics of multiplicatively dependent vectors of algebraic integers. This was influenced by the work of Pappalardi, Sha, Shparlinski, and Stewart \cite{PSSS}, who studied such statistics for vectors whose coordinates are algebraic integers of bounded height within a fixed degree or within a fixed number field. Since then, the distribution of multiplicatively related vectors in a given base field has been extensively studied, including work on their density \cite{KSSS,Stew19} and translations \cite{DS2018}.

We also observe that this problem is equivalent to the problem of counting points contained within some proper algebraic subgroup of $\G^n_m$, where $\G_m$ is the multiplicative group of an algebraic closure of $\Q$, see~\cite{Schmidt96}. In addition, we note that multiplicatively independent vectors plays an important role in dynamical Mordell-Lang conjecture, as seen at~\cite{OS15,SV}.

In another direction, there have been various research directions concerning multiplicatively dependent vectors that lie on a variety, specifically a curve. For example,~\cite{OSSZ} studies abelian multiplicatively dependent points on a curve in a torus. Specialising to elliptic curves, such result may be seen at~\cite{BCMOS,BS20,BBHOS}. Another problem of this flavor, as seen at~\cite{BBGMOS,Youn24}, addresses multiplicative dependence among the values of several fixed polynomials.

Our work combines these directions as a natural extension of~\cite{PSSS}. Here we discuss the statistics of multiplicatively dependent $n$-dimensional integer vectors on a variety, specifically a hyperplane. As per our knowledge, such result was done only for $n=2$, as seen at~\cite{ BMZ99, DS2018}. In addition,~\cite{BMZ08} also considers a parallel problem on the finiteness of the number of such vectors where the variety is a ``deprived" plane $\cP^{oa}$ of dimension $2$.

Formally, for a fixed positive integer  $n$ and $J\in \C$ and $\balpha \in \Z^n$, we count the number of vectors $\bnu$ such that the following conditions hold:
\begin{itemize}
    \item the coordinates $\nu_1,\dots,\nu_n$ are in a set $A$, where $A$ is either $\Z$ or $\Z^+$ (the set of positive integers),
    \item for all $1\le i\le n$, the inequality $0<|\nu_i|\le H$ holds,
    \item the vector satisfies $\alpha_1\nu_1+\dots+\alpha_n\nu_n=J$ or $\balpha \cdot \bnu =J$ and
\item it is multiplicatively dependent.
\end{itemize}
Our goal is to derive results that hold uniformly, as $|J|$ and $H$ goes to infinity.

We first recall that~\cite[Theorem~1.4]{PSSS} implies the number of such vectors in $\Z^n$ is of order $H^{n-1}$. Separately, standard techniques from geometry of numbers (see Section~\ref{sec:prelim-lat}) show that the number of vectors in $\Z^n$ lying on a fixed hyperplane is also of order $H^{n-1}$. In the next section, we demonstrate that these two conditions are usually independent; except for some classes of hyperplanes, the number of of such vectors that are multiplicatively dependent and lie on a fixed hyperplane is of order $H^{n-2}$.

We remark that the present work only considers integer vectors due to constraints arising from some of our tools in the arguments. Neverthless, our main argument may be extended to algebraic integers, analogous to the setting at~\cite{PSSS}. We refer to Section~\ref{sec:further-ext} for further disscussions on this direction.

\subsection{Counting vectors on a hyperplane}\label{sec:results}
Define
\begin{align*}
    \cS_n(H,J; \balpha)\coloneqq \left\{\bnu \in \cM_n(\C^*) \cap [-H,H]^n \colon \balpha \cdot \bnu = J\right\}
\end{align*}
and let $S_n(H,J;\balpha)=\#\cS_n(H,J;\balpha)$. In addition, let $k$ be the number of nonzero coordinates of $\balpha$. Previous result from~\cite[Theorem 1.4]{PSSS}, restated in~\eqref{eqn:PSSS}, considered the case $k=0$, which corresponds to counting multiplicatively dependent vectors $\bnu \in \Z^n$ in the box $[-H,H]^n$. 

We are ready to state our main result in this section related to $S_n(H,J;\balpha)$ for $k\ge 3$.
\begin{theorem}\label{thm:main-k>=3}
Let $n \ge 3$ be a positive integer.
Let $J$ be an integer and $\balpha \in \Z^n$ be a vector with $k\ge 3$ nonzero coordinates.
Then, there exists an explicitly computable constant $C_{\balpha;J} \ge 0$ that depends on $\balpha$ and $J$ such that for any real number $H \gg |J|$,
\[ S_n(H,J;\balpha) = C_{\balpha;J} H^{n-2} + \begin{cases}
    O (H^{n-5/2} + |J|^2 H^{n - 4}) & \quad \text{if} \quad k\ge 5,\\
    O(H^{n-5/2}(\log H)^{16} + |J| H^{n - 3}) & \quad \text{if} \quad k=3,4 \text{ and }J\ne 0.
\end{cases} \]
\end{theorem}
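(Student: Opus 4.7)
My plan is to split $\cS_n(H,J;\balpha)$ according to the ``source'' of the multiplicative dependence: vectors with some coordinate $\nu_i\in\{\pm 1\}$ (which are automatically multiplicatively dependent and will yield the main term $C_{\balpha;J}H^{n-2}$) versus vectors with $|\nu_i|\ge 2$ for all $i$ that are nonetheless multiplicatively dependent (which will be bounded and absorbed into the error).

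For the main term, I would apply inclusion--exclusion on the set of indices $I\subseteq\{1,\dotsc,n\}$ where $\nu_i\in\{\pm 1\}$. For each nonempty $I$ and sign pattern $\epsilon\in\{\pm 1\}^I$, the number of integer vectors $\bnu\in([-H,H]\setminus\{0\})^n$ with $\nu_i=\epsilon_i$ for $i\in I$ and $\balpha\cdot\bnu=J$ reduces to a lattice-point count in $([-H,H]\setminus\{0\})^{n-|I|}$ on the shifted hyperplane
\[
\sum_{i\notin I}\alpha_i\nu_i \;=\; J-\sum_{i\in I}\alpha_i\epsilon_i.
\]
The geometry-of-numbers tools collected in Section~\ref{sec:prelim-lat} evaluate each count as $c_{I,\epsilon}(J)H^{n-|I|-1}$ with explicit error in $H$ and $J$. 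Inclusion--exclusion gathers the $|I|=1$ leading contributions into $C_{\balpha;J}H^{n-2}$, while the $|I|\ge 2$ terms together with the lattice-counting errors assemble into the $O(|J|^2H^{n-4})$ (respectively $O(|J|H^{n-3})$) portions of the stated error.

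For the remaining ``nontrivial'' contribution, let $\cS^*\subseteq \cS_n(H,J;\balpha)$ denote the subset with $|\nu_i|\ge 2$ for all $i$. Lemma~\ref{lem:exp} supplies, for each such $\bnu$, an exponent vector $\bk\in\Z^n\setminus\{\bm{0}\}$ with $\prod_i \nu_i^{k_i}=1$ and $\|\bk\|_\infty\ll(\log H)^{n-1}$. A union bound over $\bk$ reduces the problem to bounding, uniformly in $\bk$, the number of integer points on the affine variety
\[
V_{\bk}(J,\balpha) \;=\; \bigl\{\bnu\in\Z^n : \balpha\cdot\bnu=J,\ \prod_i \nu_i^{k_i}=1\bigr\}
\]
inside $([-H,H]\setminus\{-1,0,1\})^n$. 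Since $V_{\bk}$ has codimension at least two, I would fiber over $n-2$ coordinates chosen inside the support of $\balpha$ (so both defining equations remain nondegenerate after specialization), reducing the inner count to an integer-point count on an affine curve of bounded degree amenable to Bombieri--Pila-type estimates. Summing the resulting curve bound over the polynomially many admissible $\bk$ yields $O(H^{n-5/2})$ when $k\ge 5$, and $O(H^{n-5/2}(\log H)^{16})$ when $k=3,4$.

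The case split $k\ge 5$ versus $k=3,4$ reflects the linear flexibility available in the choice of fiber: with $k\ge 5$ more indices of $\balpha$'s support can be dedicated to the unfibered pair, absorbing the logarithmic factors and yielding the stronger $|J|^2H^{n-4}$ dependence, while for $k=3,4$ the fiber is rigid and the union bound introduces the $(\log H)^{16}$ loss together with the weaker $|J|H^{n-3}$ dependence. The assumption $J\ne 0$ in the second case is needed to prevent families of scalar multiples of a fixed vector from accumulating on a hyperplane through the origin and distorting the error. The main technical obstacle is obtaining the affine-variety count uniformly in $\bk$, especially when $\bk$ is supported on coordinates $i$ with $\alpha_i=0$, in which case the two defining equations decouple and a separate lattice argument is required; threading this uniformity through the union bound while preserving the $H^{-1/2}$ saving is where the bulk of the technical work lies.
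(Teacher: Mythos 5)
There is a genuine gap, and it sits at the heart of your decomposition. Your ``nontrivial'' set $\cS^*$ (all $|\nu_i|\ge 2$) is not an error term: it contains, for every pair $i_1<i_2$, all vectors with $\nu_{i_1}=\pm\nu_{i_2}$ lying on the hyperplane, and after substituting this relation the linear equation $\balpha\cdot\bnu=J$ becomes an equation in $n-1$ free coordinates with $\asymp H^{n-2}$ integer solutions in the box (whenever the gcd condition is satisfied). So $\#\cS^*$ is of order $H^{n-2}$, the same order as the main term, and cannot be bounded by $O(H^{n-5/2+o(1)})$. Concretely, your union bound over $\bk$ fails for exponent vectors supported on two coordinates with $k_{i_1}=-k_{i_2}$: the fibered curve then contains the rational line $\nu_{i_1}=\nu_{i_2}$, which is full of integer points, and Bombieri--Pila-type bounds (the paper's Proposition~\ref{prp:CCDN} explicitly requires that no linear factor carries integer points) give no saving there. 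This is exactly why the paper's constant is $C_{\balpha;J}=C^{(0)}_{\balpha;J}+C^{(1)}_{\balpha;J}$ in~\eqref{eqn:Calpha}: the $C^{(1)}$ piece, computed in Proposition~\ref{prp:Sn1} by counting solutions of~\eqref{eqn:linear-} and~\eqref{eqn:linear+} via Theorem~\ref{thm:shifted-lattice-box-count}, comes precisely from rank-one vectors with $\nu_{i_1}=\pm\nu_{i_2}$ and no coordinate equal to $\pm 1$, which your inclusion--exclusion over $\pm 1$ coordinates never sees. For $k=3$ the situation is worse still: rank-two vectors with $\alpha_j\nu_j=J$ and $\nu_{j''}$ a fixed rational multiple of $\nu_{j'}$ contribute a further main term $C^{(2);k=3}_{\balpha;J}H^{n-2}$ (see~\eqref{eqn:k=3,r=2,main} and~\eqref{eqn:Calpha-k=3}), again invisible to your split.

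The salvageable parts of your plan are the main-term computation for vectors with a $\pm 1$ coordinate (this matches the paper's $S_{n,0}$ analysis, Proposition~\ref{prp:Sn0}) and the Bombieri--Pila fibering for genuinely ``curved'' dependencies; but to close the argument you must first stratify $\cS^*$ by the shape of the minimal dependence, extract the $\nu_{i_1}^{t}=\nu_{i_2}^{\pm t}$ stratum (and, for $k=3$, the degenerate $\alpha_j\nu_j=J$ stratum) as additional $H^{n-2}$ main terms via lattice-point counts, and only then apply the curve bounds to what remains, taking care that the specialized curves have no linear factors with integer points --- which is where the paper's hypotheses in Theorems~\ref{thm:bompil-2var}--\ref{thm:bompil-4var} (e.g.\ $J\ne 0$, $\alpha_{i}\nu_{i}\ne J$) come from.
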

A formula for $C_{\balpha;J}$ is given in~\eqref{eqn:Calpha} and~\eqref{eqn:Calpha-k=3} for $k\ge 4$ and $k=3$, respectively. The constant $C_{\balpha;J}$ is generally nonzero, except for certain choices of $\balpha$ and $J$ which are discussed in Section~\ref{sec:strength}.

For completeness, we mention a corollary of this result where we fix $J$ (which also fix the hyperplane) and let $H\to \infty$.
\begin{corollary}\label{cor:k>=3-fixed}
    If we fix $J$ and let $H\to \infty$, we obtain \[ S_n(H,J;\balpha) = C_{\balpha;J} H^{n-2} + \begin{cases}
    O (H^{n-5/2}) & \quad \text{if} \quad k\ge 5,\\
    O(H^{n-5/2}(\log H)^{16}) & \quad \text{if} \quad k=3,4 \text{ and } J\ne 0.
\end{cases}  \]
\end{corollary}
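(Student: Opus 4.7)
The plan is to derive Corollary~\ref{cor:k>=3-fixed} directly from Theorem~\ref{thm:main-k>=3} by regarding $J$ as a fixed constant and absorbing the $J$-dependent error terms into the $J$-independent ones. Since the statement of Theorem~\ref{thm:main-k>=3} requires $H \gg |J|$, fixing $J$ means this hypothesis is automatically satisfied once $H$ is sufficiently large, so no extra work is needed there.

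For the case $k \ge 5$, Theorem~\ref{thm:main-k>=3} gives an error of $O(H^{n-5/2} + |J|^2 H^{n-4})$. With $J$ fixed, the factor $|J|^2$ becomes an absolute constant, and since $n - 4 < n - 5/2$, the term $|J|^2 H^{n-4}$ is swallowed by $O(H^{n-5/2})$. For the case $k = 3, 4$ with $J \ne 0$, the error is $O(H^{n-5/2}(\log H)^{16} + |J| H^{n-3})$; again $|J|$ becomes constant, and $n - 3 < n - 5/2$, so $|J| H^{n-3} = O(H^{n-5/2})$, which is dominated by $O(H^{n-5/2}(\log H)^{16})$. In both cases the leading constant $C_{\balpha;J}$ is unchanged by the specialisation.

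There is essentially no obstacle here: the corollary is a routine simplification of the uniform statement. The only mild care needed is to remember that the implicit constants in the $O$-notation of Corollary~\ref{cor:k>=3-fixed} now depend on $J$ (and $\balpha$), whereas in Theorem~\ref{thm:main-k>=3} they were uniform in $J$. This dependence is harmless for the asymptotic as $H \to \infty$, which is precisely what the corollary claims.
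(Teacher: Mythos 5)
Your proposal is correct and matches the paper's (implicit) proof exactly: the corollary is obtained by specialising Theorem~\ref{thm:main-k>=3} to fixed $J$, noting that $H \gg |J|$ holds automatically and that $|J|^2 H^{n-4}$ and $|J|H^{n-3}$ are absorbed into $O(H^{n-5/2})$ since $n-4 < n-3 < n-5/2$ as exponents of $H$. Your remark that the implied constants now depend on $J$ is also the right caveat.
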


We now discuss our results for the remaining case, $k=1,2$. When $k=2$, we obtain a formula in a similar setup from Corollary~\ref{cor:k>=3-fixed}.
\begin{theorem}\label{thm:main-k=2}
Let $J \neq 0$ be an integer and $\balpha \in \Z^n$ with exactly two nonzero elements. Then, there exists a constant $C_{\balpha;J} \ge 0$ that depends on $\balpha$ and $J$ such that as $H \to \infty$,
\[ S_n(H,J;\balpha) = C_{\balpha;J} H^{n-2} + O (H^{n-5/2}(\log H)^{16}). \]
\end{theorem}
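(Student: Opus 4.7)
After permuting coordinates, I may assume $\alpha_1, \alpha_2 \neq 0$ and $\alpha_3 = \cdots = \alpha_n = 0$, so that the constraint $\balpha \cdot \bnu = J$ reads $\alpha_1 \nu_1 + \alpha_2 \nu_2 = J$ while the \emph{tail} $\bnu' := (\nu_3, \ldots, \nu_n)$ is free apart from $\nu_j \in \Z \setminus \{0\}$ and $|\nu_j| \le H$. Define
\[
\cL := \{(\nu_1, \nu_2) \in (\Z \setminus \{0\})^2 : \alpha_1 \nu_1 + \alpha_2 \nu_2 = J,\ \max(|\nu_1|, |\nu_2|) \le H\}.
\]
Elementary lattice counting on the affine line yields $\#\cL = c_{\balpha, J} H + O(1)$ for an explicit non-negative constant $c_{\balpha, J}$ (zero if $\gcd(\alpha_1, \alpha_2) \nmid J$).

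The strategy is to decompose $\cS_n(H, J; \balpha) = \cS_n^A \sqcup \cS_n^B$ according to whether the tail $\bnu'$ is multiplicatively dependent. For $\bnu \in \cS_n^A$ the full vector is automatically multiplicatively dependent for every $(\nu_1, \nu_2) \in \cL$, so $\#\cS_n^A = \#\cL \cdot T_{n-2}(H)$, where $T_{n-2}(H)$ counts multiplicatively dependent vectors in $(\Z \setminus \{0\})^{n-2} \cap [-H, H]^{n-2}$. Invoking~\eqref{eqn:PSSS} from~\cite{PSSS} supplies an asymptotic for $T_{n-2}(H)$ with leading term of order $H^{n-3}$ and error $O(H^{n-7/2}(\log H)^{16})$; multiplying by $\#\cL$ produces a contribution of the form $c_{\balpha, J}\,\widetilde{c}_{n-2}\,H^{n-2} + O(H^{n-5/2}(\log H)^{16})$, which already matches the target error shape and furnishes part of the leading coefficient.

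It remains to bound $\#\cS_n^B$, where the tail is multiplicatively independent (hence $|\nu_j| \ge 2$ for all $j \ge 3$) but the relation $\prod_{i=1}^n \nu_i^{k_i} = 1$ has $(k_1, k_2) \ne (0,0)$. I split further by the multiplicative status of $(\nu_1, \nu_2)$. \textbf{Dependent subcase:} here $|\nu_1|, |\nu_2|$ are powers of a common base $m$, so substituting $(\pm m^p, \pm m^q)$ into the line equation gives a polynomial relation $\pm \alpha_1 m^p \pm \alpha_2 m^q = J$ of positive degree in $m$; since $J \ne 0$, an elementary size comparison (or Pillai-type bound) shows only $O_{\balpha, J}(1)$ such pairs exist, and each combines with $(2H)^{n-2}+O(H^{n-3})$ multiplicatively independent tails to give an additional constant multiple of $H^{n-2}$, absorbed into $C_{\balpha, J}$. \textbf{Independent subcase:} the dependency must couple $(\nu_1, \nu_2)$ with some tail coordinate; Lemma~\ref{lem:exp} yields an exponent vector $\bk$ with $\|\bk\|_\infty \ll (\log H)^{16}$, and after a union bound over the $(\log H)^{O(1)}$ admissible $\bk$, each fixed $\bk$ restricts $\bnu$ to an algebraic subvariety whose integer points on the affine line $\times [-H,H]^{n-2}$ can be counted via geometry of numbers (in the spirit of the proof of Theorem~\ref{thm:main-k>=3}), giving $O(H^{n-5/2}(\log H)^{16})$.

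The principal obstacle is the last subcase. Because $(\nu_1, \nu_2)$ is now confined to a one-dimensional affine set (rather than the $\ge 2$-dimensional slices available when $k \ge 3$), the geometry-of-numbers saving is more delicate: each relation $\nu_1^{k_1} \nu_2^{k_2} \prod_{j \ge 3} \nu_j^{k_j} = 1$ must cut the tail dimension by at least one in a way that still respects the line constraint on $(\nu_1, \nu_2)$, and the interaction of the two constraints is what produces precisely the $H^{-1/2}$ saving over the main term. The $(\log H)^{16}$ factor is inherited directly from Lemma~\ref{lem:exp}, whose range of exponent vectors we enumerate in the union bound.
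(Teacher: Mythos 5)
Your decomposition (by multiplicative dependence of the tail $(\nu_3,\dots,\nu_n)$ and then of the pair $(\nu_1,\nu_2)$, rather than by the multiplicative rank of the whole vector as in the paper) is a legitimately different bookkeeping of the main term: case A reduces to $\#\cL\cdot T_{n-2}(H)$ with~\eqref{eqn:PSSS}, and your ``dependent pair'' subcase is the analogue of the paper's finiteness statement (Lemma~\ref{lem:extra-count-k=2}) feeding into the constant $S'_2$ in Proposition~\ref{prp:Sn0,1-k=2}; with minor repairs (the error in~\eqref{eqn:PSSS} is $H^{n-4}\exp(c\log H/\log\log H)$, not $H^{n-7/2}(\log H)^{16}$, and finiteness of $\pm\alpha_1 m^p\pm\alpha_2 m^q=J$ needs the divisibility argument $m^{\min(p,q)}\mid J$, not a pure size comparison, because the two terms can nearly cancel) this part would go through. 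The genuine gap is your ``independent subcase,'' which you yourself flag as the principal obstacle and then do not prove. This is exactly the content of Proposition~\ref{prp:k=2,3,4} for $k=2$, and it cannot be obtained ``via geometry of numbers in the spirit of Theorem~\ref{thm:main-k>=3}'': after substituting the line $\alpha_1\nu_1+\alpha_2\nu_2=J$ into a mixed relation such as $\nu_1^{k_1}\nu_2^{k_2}=\nu_3^{k_3}$, the solutions are integer points of height $H$ on a plane curve like $A x^{k_1}(J-\alpha_1 x)^{k_2}=B\alpha_2^{k_2}y^{k_3}$, and the required saving $O(H^{1/2+o(1)})$ per exponent vector is a determinant-method bound (Lemma~\ref{lem:bompil-CCDN}, Proposition~\ref{prp:CCDN}, packaged as Theorems~\ref{thm:bompil-2var} and~\ref{thm:bompil-4var}), which in turn needs one to rule out linear factors of these curves carrying integer points; Davenport-type lattice counting gives no power saving on a curve, and the trivial divisor-type bound only yields $H^{n-2+o(1)}$, i.e.\ main-term size.

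Two further structural points are missing from that subcase. First, the exponent bound you quote is wrong: Lemma~\ref{lem:exp} gives $\|\bk\|_\infty\ll(\log H)^{n-1}$, not $(\log H)^{16}$; the paper gets small exponents by passing to a \emph{minimal} dependent subset of coordinates, and the $(\log H)^{16}$ in the error arises from summing the curve bounds over the resulting $O((\log H)^{12})$ exponent choices, not from Lemma~\ref{lem:exp} itself. Second, mixed relations need not involve only one or two tail coordinates: when the minimal relation couples $(\nu_1,\nu_2)$ with three or more tail coordinates, the right tool is the smooth-number/divisor argument of Lemma~\ref{lem:SnIupper} (Corollary~\ref{cor:Snrupper}), which gives $O(H^{n-3+o(1)})$; your proposal does not separate these cases, and no single ``fixed $\bk$ cuts a subvariety'' argument covers both regimes. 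Until the curve-counting input (or an equivalent power-saving bound for the three- and four-variable systems) is supplied, the stated error term $O(H^{n-5/2}(\log H)^{16})$ is unsubstantiated.
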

We do not have an equivalent result of Theorem~\ref{thm:main-k>=3} for $k=2$, due to reasons explained in Section~\ref{sec:strength}.

When $k=1$, the problem of counting $S_n(H,J;\balpha)$ is equivalent to counting the number of multiplicatively dependent vectors $\bnu \in \Z^n$ in the box $[-H,H]^n$ such that one of its coordinates are fixed, say $\alpha_1=J/\|\balpha\|$. Therefore, we may let $\balpha=\bfe_1 = (1, 0, 0, \ldots, 0)$ as the first standard basis vector in $\Z^n$ and $J$ be fixed. In this setup, we obtain the following result.
\begin{theorem}\label{thm:main-k=1}
Let $n \geq 3$ and $J \ne 0$ be an integer. Then, there exists real constants $C^{(0)}_J$, $C^{(1)}_J$ that depend on $J$ with
\[ S_n(H,J;\bfe_1) = \begin{cases} 
    (2H)^{n-1} & \quad \text{if} \quad |J| = 1, \\
    C^{(1)}_JH^{n-2}\left\lfloor \dfrac{\log H}{\log f(|J|)}\right\rfloor+C^{(0)}_JH^{n-2}+O(H^{n-5/2}) & \quad \text{if} \quad |J| > 1,
\end{cases} \]
    where for any positive integer $A > 1$, $f(A)$ is the smallest positive integer $B$ such that $A$ is a power of $B$.
\end{theorem}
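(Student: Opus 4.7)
The strategy is to translate the problem to a count over positive integers, then extract the main contributions from explicit families of vectors that automatically witness multiplicative dependence. My starting observation is that for integer vectors with no zero coordinates, $\bnu$ is multiplicatively dependent iff $(|\nu_1|,\dotsc,|\nu_n|)$ is: a relation $\prod \nu_i^{k_i}=1$ forces $\prod|\nu_i|^{k_i}=1$ with the same exponent vector, and conversely doubling the exponent transports a positive relation to a signed one. Hence $S_n(H,J;\bfe_1)=2^{n-1}N(H)$, where $N(H)$ is the count of positive integer vectors $(m_2,\dotsc,m_n)\in[1,H]^{n-1}$ such that $(|J|,m_2,\dotsc,m_n)$ is multiplicatively dependent. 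When $|J|=1$, the relation $J^2=1$ witnesses dependence for every such vector, giving $N(H)=H^{n-1}$ and $S_n=(2H)^{n-1}$.

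Suppose now $|J|>1$. Write $u=|J|$, $B=f(u)$ so $u=B^r$ for the maximal $r\ge 1$, and set $L=\lfloor \log H/\log B\rfloor$. The distinguished set $W=\{1,B,B^2,\dotsc,B^L\}\subset [1,H]$ has the key property that whenever some coordinate $m_i\in W$, multiplicative dependence of $(u,\bm{m})$ holds automatically: for $m_i=B^a$ with $a\ge 1$ via $u^a m_i^{-r}=B^{ar}B^{-ar}=1$, and for $m_i=1$ via $m_i^1=1$. A straightforward inclusion-exclusion then yields
\[
\#\{\bm{m}\in[1,H]^{n-1}:\text{some }m_i\in W\} = H^{n-1}-(H-L-1)^{n-1} = (n-1)(L+1)H^{n-2}+O(L^2 H^{n-3}),
\]
and multiplying by $2^{n-1}$ produces the main term $C^{(1)}_J L H^{n-2}$ with $C^{(1)}_J=2^{n-1}(n-1)$ together with a contribution $2^{n-1}(n-1) H^{n-2}$ to $C^{(0)}_J H^{n-2}$; the inclusion-exclusion error is $O(H^{n-3}\log^2 H)=o(H^{n-5/2})$.

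What remains is the residual set $\cR(H)=\{\bm{m}\in[2,H]^{n-1}\cap\Z^{n-1}:m_i\notin W\text{ for all }i,\ (u,\bm{m})\text{ mult.~dep.}\}$, which I would decompose as $\cR=\cR_1\cup\cR_2$. Here $\cR_1$ consists of vectors for which $(m_2,\dotsc,m_n)$ is already multiplicatively dependent, and $\cR_2$ of vectors for which $(m_2,\dotsc,m_n)$ is multiplicatively independent but $u^N\in\langle m_2,\dotsc,m_n\rangle$ for some $N\ge 1$. For $\cR_1$, I would parametrise $\bm{m}$ by the rank-$\le(n-2)$ subgroup of $\Q^*_{>0}$ it generates and perform a PSSS-type count (in the spirit of \cite[Theorem 1.4]{PSSS}) to obtain $|\cR_1(H)|=c(B)H^{n-2}+O(H^{n-5/2})$ for an explicit constant $c(B)$; this absorbs into $C^{(0)}_J$ and is the source of its $J$-dependence. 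For $\cR_2$, Lemma~\ref{lem:exp} bounds the witnessing exponent vector $(N,k_2,\dotsc,k_n)$ polylogarithmically in $H$, and the hypersurface $\prod m_i^{k_i}=u^N$ in $\R^{n-1}$ contains $O(H^{n-5/2})$ integer lattice points per exponent choice by the estimates of Section~\ref{sec:prelim-lat}; summing the polylogarithmically many exponent choices gives $|\cR_2(H)|=O(H^{n-5/2})$.

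The main obstacle will be the sharp control of $|\cR_1(H)|$ with error $O(H^{n-5/2})$: the naive bound from direct lattice-point counting in rank-$(n-2)$ subgroups yields only $O(H^{n-2}\log^{n-2}H)$, so beating this to isolate the precise leading constant $c(B)$ requires parametrising by primitive generating bases for the subgroups (to avoid heavy overcounting), separating the contributions by rank $r\le n-2$, and controlling each rank stratum uniformly in $B$.
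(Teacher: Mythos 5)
Your reduction to positive vectors ($S_n(H,J;\bfe_1)=2^{n-1}N(H)$), the $|J|=1$ case, and the count of vectors meeting $W=\{1,B,\dots,B^L\}$ (giving $C^{(1)}_J=2^{n-1}(n-1)$ and a piece of $C^{(0)}_J$) are all fine and consistent with what the paper does in Sections~\ref{sec:k=1} and~\ref{sec:k=1-main}. The fatal problem is your treatment of $\cR_2$. The claim that $|\cR_2(H)|=O(H^{n-5/2})$ is false: $\cR_2$ contains, for each ordered pair of positions $(i_1,i_2)$ with $i_1,i_2\ge 2$, the family $m_{i_1}=x$, $m_{i_2}=B^t x$ with $t\ge 1$ and $x\notin W$ (here $B=f(|J|)$, $u=B^r$), since $u^t x^r=(B^t x)^r$ gives a relation involving $u$ while $(x,B^tx)$ and generically all of $\bm{m}$ are multiplicatively independent. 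Summing $H/B^t$ over $t$ and choosing the remaining $n-3$ coordinates freely yields $\asymp H^{n-2}/(B-1)$ vectors per ordered pair, i.e.\ a genuine main-term contribution $2^{n-1}(n-1)(n-2)(f(|J|)-1)^{-1}H^{n-2}$ after restoring signs. This is exactly the term the paper extracts from rank-two vectors of the form $(J,x,J^rx,\dots)$ in Section~\ref{sec:k=1} (Proposition~\ref{prp:k=1}), and it is the actual source of the $J$-dependence of $C^{(0)}_J$ in~\eqref{eqn:Cbfe1}; omitting it gives the wrong constant. Your supporting argument also does not work: the lattice-point estimates of Section~\ref{sec:prelim-lat} concern hyperplane sections of boxes and say nothing about the multiplicative hypersurface $\prod_i m_i^{k_i}=u^N$; already the single equation $m_3=u\,m_2$ (exponents $(1,1)$, $N=1$) has $\asymp H^{n-2}$ solutions in the box, so no per-exponent bound of $O(H^{n-5/2})$ can hold.

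Relatedly, your attribution of the $J$-dependence of $C^{(0)}_J$ to the constant $c(B)$ in $\cR_1$ is misplaced: the leading term of $\cR_1$ comes from vectors with two equal coordinates avoiding $W$, which contributes $\binom{n-1}{2}H^{n-2}+O(H^{n-3}\log H)$ independently of $B$ (matching the $2^{n-2}(n-1)(n-2)$ piece of~\eqref{eqn:Cbfe1} after multiplying by $2^{n-1}$); the residual dependencies $m_i^a=m_j^b$ with $a\neq b$ give $O(H^{n-5/2})$ and higher ranks give $O(H^{n-3+o(1)})$, so $\cR_1$ is manageable by the PSSS-type bounds as you suggest. To repair the proof you must analyze $\cR_2$ honestly: relations $u^{k_1}m_{i_1}^{k_2}=m_{i_2}^{k_3}$ with $k_2=k_3$ produce the $(x,B^tx)$ families and the $1/(f(|J|)-1)$ constant, relations with $k_2\neq k_3$ force one coordinate to be a perfect power and give $O(H^{n-5/2})$, and relations genuinely involving three or more of the $m_i$ are $O(H^{n-3+o(1)})$ by Lemma~\ref{lem:exp} together with Lemma~\ref{lem:divdeBruijn-div} — which is precisely the route the paper takes.
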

We provide the exact definition of $C^{(1)}_J$ and $C^{(0)}_J$ in~\eqref{eqn:Cbfe1}. Note that the different result for $|J| = 1$ is expected, as every vector whose coordinate contains $\pm 1$  is multiplicatively dependent.

These results can be compared with Pappalardi, Sha, Shparlinski and Stewart's result~\cite[Theorem~1.4]{PSSS} as follows.
Denote $S_n(H)$ and $S_n^*(H)$ as the number of multiplicatively dependent vectors $\bnu \in \Z^n$  of height at most $H$. 
By substituting $d=1$ in~\cite[Equation~(1.17)]{PSSS}, we obtain 
\begin{equation}\label{eqn:PSSS}\begin{split}
    S_n(H)=2^{n-1}n(n+1)H^{n-1}+\begin{cases}
        O(\log H) & \quad \text{if} \quad n = 2, \\
        O(H^{n-2}\exp(c_n\log H/\log \log H)) & \quad \text{if} \quad n\ge 3,
    \end{cases}
    \end{split}
\end{equation}
where $c_n > 0$ only depends on $n$. Note that we give a different error term for $S_2(H)$ compared to~\cite[Equation (1.17)]{PSSS}, due to reasons explained in Section~\ref{sec:cor-PSSS}.

A variant of our main problem is to count multiplicatively dependent positive integer points.
For fixed positive integers $J$ and $n>1$, we denote $S^+_n(H,J;\balpha)$ as the number of multiplicatively dependent positive integer vectors $(\nu_1,\dots,\nu_n)$ that also satisfies $\balpha \cdot \bnu = J$. 
In comparison, if we do not require the hyperplane condition, we may follow~\cite{PSSS} to obtain the number of multiplicatively dependent positive integer vectors of height at most $H$ is $S_n(H)/2^n$, where $S_n(H)$ is defined before~\eqref{eqn:PSSS}.

We obtain the following result with a few caveats on the condition for $\balpha$.

\begin{theorem}\label{thm:main-positive-integer}
With the same requirements as Theorem~\ref{thm:main-k>=3}, the same results also hold if we replace $S_n$ with $S_n^+$ and replacing $C_{\balpha;J}$ with another constant $C^+_{\balpha;J}$ for $k\ge 4$, as long as $\balpha$ has at least two positive and two negative coordinates, with the same error terms.
  
In addition, for $k=1$, we obtain
\[ S_n^+(H,J;\bfe_1) = \begin{cases} 
    H^{n-1} & \quad \text{if} \quad J = 1, \\
    \dfrac{C^{(1)}_J}{2^{n-1}}H^{n-2}\left\lfloor \dfrac{\log H}{\log f(|J|)}\right\rfloor+\dfrac{C^{(0)}_J}{2^{n-1}}H^{n-2}+O(H^{n-5/2}) & \quad \text{if} \quad J > 1,
\end{cases} \]
where $C^{(1)}_J$ and $C^{(0)}_J$ are defined in~\eqref{eqn:Cbfe1} and $f(A)$ is defined in Theorem~\ref{thm:main-k=1}.
\end{theorem}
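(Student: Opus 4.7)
The plan is to adapt the arguments underlying Theorems~\ref{thm:main-k>=3} and~\ref{thm:main-k=1} by restricting the sign analysis to the all-positive orthant. In our proofs of the main theorems, one may stratify the vectors $\bnu \in \cS_n(H,J;\balpha)$ by their sign pattern $\bm{\epsilon} = (\sgn \nu_1, \ldots, \sgn \nu_n) \in \{\pm 1\}^n$; substituting $\nu_i = \epsilon_i |\nu_i|$ transforms the hyperplane condition into $\sum_{i=1}^n (\epsilon_i \alpha_i) |\nu_i| = J$ while preserving multiplicative dependence. The total count thereby splits as a sum of $2^n$ analogous positive-orthant counts, with $\balpha$ replaced by its sign-twisted versions. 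The quantity $S^+_n(H,J;\balpha)$ corresponds precisely to the trivial sign pattern $\bm{\epsilon} = (+1,\ldots,+1)$, and $C^+_{\balpha;J}$ is defined as the resulting summand of $C_{\balpha;J}$.

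For $k \ge 4$, the contribution from the all-positive orthant satisfies the same asymptotic as in Theorem~\ref{thm:main-k>=3}, provided that the main term retains full order $H^{n-2}$. That main term arises from vectors in which a fixed pair of coordinates satisfies a prescribed multiplicative relation of the form $\nu_i = \pm \nu_j^t$, while the remaining coordinates range over the lattice points on the corresponding section of the hyperplane $\balpha \cdot \bnu = J$. Restricting to positive entries forces $\nu_i = \nu_j^t$ only, and a geometry-of-numbers count on the positive section then yields the desired order $H^{n-2}$ precisely when $\balpha$ has at least two positive and at least two negative coordinates. Otherwise the positive section, after imposing the pair relation, becomes either bounded in volume or of reduced dimension, and no universal main term of the correct order can be extracted. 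The error terms $O(H^{n-5/2} + |J|^2 H^{n-4})$ pass through unchanged because the auxiliary bounds on multiplicative dependence used in the proof of Theorem~\ref{thm:main-k>=3} are sign-agnostic.

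For $k = 1$ with $\balpha = \bfe_1$, the hyperplane constraint fixes $\nu_1 = J$ while $(\nu_2, \ldots, \nu_n)$ ranges freely over $[1,H]^{n-1} \cap \Z^{n-1}$. When $J = 1$, every such vector is trivially multiplicatively dependent since $\nu_1^1 = 1$, producing the stated count of $H^{n-1}$. When $J > 1$, the classification of multiplicatively dependent tuples established in the proof of Theorem~\ref{thm:main-k=1} (chains indexed by common bases and low-rank relations) carries over, and the restriction $\nu_i \in [1,H]$ for $i \ge 2$ simply removes the $2^{n-1}$-fold sign choice in each principal term, explaining the factor $1/2^{n-1}$ in the stated asymptotic; the error $O(H^{n-5/2})$ is inherited. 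The main obstacle in this plan is verifying that the sign-pattern decomposition is compatible with the delicate arithmetic cancellations yielding the logarithmic factor $\lfloor \log H / \log f(|J|) \rfloor$ in Theorem~\ref{thm:main-k=1}; this should be straightforward since the relevant multiplicative chains are indexed by positive integer data that respects the sign restriction, but it requires a careful rewriting of the proof of Theorem~\ref{thm:main-k=1} to isolate the all-positive contribution.
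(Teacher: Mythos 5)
Your proposal is correct and follows essentially the same route as the paper: for $k\ge 4$ the rank~$\ge 2$ contributions are controlled by the same sign-agnostic upper bounds (so the error terms transfer), while the rank~$0$ and rank~$1$ main terms are recomputed by a Davenport-type lattice count on the positive-orthant sections, where positivity forces $\nu_i=1$ and $\nu_{i_1}=\nu_{i_2}$, and the hypothesis of at least two positive and two negative coordinates of $\balpha$ keeps the sections full-dimensional of order $H^{n-2}$, yielding $C^{+}_{\balpha;J}$ as the positive-orthant analogue of $C_{\balpha;J}$. The ``obstacle'' you flag for $k=1$ is not actually an issue: multiplicative dependence depends only on the absolute values of the coordinates, and since $\nu_1=J>0$ is fixed, taking absolute values gives an exact $2^{n-1}$-to-one correspondence between $\cS_n(H,J;\bfe_1)$ and $\cS_n^+(H,J;\bfe_1)$, so $S_n^+(H,J;\bfe_1)=S_n(H,J;\bfe_1)/2^{n-1}$ and the $k=1$ case follows directly from Theorem~\ref{thm:main-k=1} without re-running its proof --- which is exactly what the paper does.
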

In addition, we also discuss the case where $\balpha$ is a nonnegative integer vector. In this case, we automatically have $J\ll H$. Therefore, in this case we instead denote the related quantity as $S^+_n(J;\balpha)$ and obtain our asymptotical formula in the following form.

\begin{theorem}\label{thm:main-all-positive-integer}
If all coordinates of $\balpha$ are positive,  there exists a computable constant $C^+_{\balpha,J}\ge 0$, derived before Proposition~\ref{prp:allpos}, such that     
\[ S_n^+(J;\balpha)= C^+_{\balpha,J} J^{n-2} +\begin{cases}
    O(J^{n-5/2}) &\quad\text{if}\quad n\ge 5, \\
    O(J^{n-5/2}(\log J)^{16}) &\quad\text{if} \quad 3\le n \le 4.
\end{cases} \] 
\end{theorem}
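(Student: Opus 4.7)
The plan is to adapt the arguments of Theorem~\ref{thm:main-k>=3} and Theorem~\ref{thm:main-positive-integer} to the case where $\balpha$ has only positive coordinates. The key simplification is that if $\balpha, \bnu \in \Z_{>0}^n$ satisfy $\balpha \cdot \bnu = J$, then automatically $1 \le \nu_i \le J/\alpha_i$ for every $i$. Hence a separate height parameter $H$ becomes redundant: I would apply the earlier machinery with $H \asymp J/\min_j \alpha_j$, so the feasible region is already compact, being an $(n-1)$-dimensional simplex in the positive orthant of volume $\Theta(J^{n-1})$.

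Next, following the stratification used in the proof of Theorem~\ref{thm:main-k>=3}, I would split $S^+_n(J;\balpha)$ according to the type of minimal multiplicative relation satisfied by $\bnu$. The dominant contribution comes from vectors whose minimal relation is cheap --- typically two coordinates being equal, or some $\nu_i = 1$ --- which defines an affine subspace of codimension $2$ in $\R^n$ when intersected with the hyperplane $\balpha\cdot\bnu = J$. Counting lattice points on each such subspace inside the simplex via the geometry-of-numbers estimates of Section~\ref{sec:prelim-lat} yields a main term of the form $c_T J^{n-2} + O(J^{n-3})$ for each relation type $T$; summing over the finitely many types (controlled by Lemma~\ref{lem:exp}) and applying inclusion-exclusion to avoid double-counting reproduces the constant $C^+_{\balpha,J}$ defined before Proposition~\ref{prp:allpos}. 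Vectors with ``large'' minimal relations are absorbed into the error via the same exponent-vector sieve used in Theorem~\ref{thm:main-positive-integer}, yielding $O(J^{n-5/2})$ for $n \ge 5$ and $O(J^{n-5/2}(\log J)^{16})$ for $n \in \{3,4\}$, where the logarithmic factor comes from the sharpest available bounds on exponent vectors in low dimension.

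The main obstacle is that the ``at least two positive and two negative coordinates'' hypothesis of Theorem~\ref{thm:main-positive-integer} fails here, so any step in those proofs that relied on access to both sign regimes must be replaced. The compensating advantage is that the feasible region is a bounded simplex rather than an unbounded slab, so lattice-point counts come with sharper error terms and do not require an independent height parameter. I expect the most delicate point to be the following: in Theorem~\ref{thm:main-k>=3} the boundary contribution of order $|J|^2 H^{n-4}$, which in our regime $H \asymp J$ is of the same order $J^{n-2}$ as the main term, must now be shown to be absorbed into the leading coefficient $C^+_{\balpha,J}$. This requires computing the relevant simplex volume integrals explicitly and aligning them with the sum defining $C^+_{\balpha,J}$, rather than discarding them as an error.
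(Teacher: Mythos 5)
Your overall skeleton matches the paper's: take $H \asymp J$, reduce to vectors of small rank, count rank-$0$ (some $\nu_i=1$) and rank-$1$ ($\nu_{i_1}=\nu_{i_2}$) vectors as lattice points on codimension-two slices of the simplex via Section~\ref{sec:prelim-lat}, and compute the slice volumes exactly (this is Corollary~\ref{cor:Valpha-BJ-pos}), which is indeed how the paper disposes of the issue you flag about the $|J|^2H^{n-4}$ term. However, there is a genuine gap in your treatment of the higher-rank contribution when $n=3,4$. You assert that vectors with ``large'' minimal relations are absorbed into the error ``via the same exponent-vector sieve used in Theorem~\ref{thm:main-positive-integer}'', but that theorem's machinery excludes exactly this situation: for all-positive $\balpha$ you have $k=n$, so for $n=3,4$ you are in the range where Corollary~\ref{cor:Snrupper} only gives $S_{n,2},S_{n,3}\le H^{n-2+o(1)}$, and the actual saving to $O(J^{n-5/2}(\log J)^{16})$ comes from the Bombieri--Pila-based bounds of Theorems~\ref{thm:bompil-3var} and~\ref{thm:bompil-4var} through Proposition~\ref{prp:k=2,3,4} (this, not ``bounds on exponent vectors in low dimension'', is the source of the $(\log J)^{16}$). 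Your proposal never invokes these inputs, so the claimed error term is not justified as written.

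More seriously, for $n=k=3$ the rank-$2$ vectors are \emph{not} generically an error term: in the signed setting they contribute the main-term constant $C^{(2);k=3}_{\balpha;J}$ of~\eqref{eqn:k=3,r=2,main}, arising from solutions with $\alpha_j\nu_j=J$ for some $j$. To prove the present theorem one must show this family is empty (or negligible) in the positive orthant; the paper does this by observing that such vectors force $\alpha_{i_2}\nu_{i_2}+\alpha_{i_3}\nu_{i_3}=0$ in~\eqref{eqn:alpha-k=3}, which is impossible when all $\alpha_i$ and $\nu_i$ are positive, leaving only the $O(H^{n-5/2}(\log H)^{9})$ bound. Your proposal's blanket statement that large-relation vectors go into the error is precisely the claim that fails without this positivity argument, and it cannot be imported from Theorem~\ref{thm:main-positive-integer}, whose hypothesis (two positive and two negative coordinates, $k\ge 4$) you correctly note is violated here but never replace with an actual argument. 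Supplying this sign observation, and citing Proposition~\ref{prp:k=2,3,4} for $n=3,4$, would close the gap; the rest of your outline is consistent with the paper's proof.
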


A special case of Theorem~\ref{thm:main-all-positive-integer} where $\balpha=\bone_n$ is discussed in Section~\ref{sec:allone}, with alternative proofs and possible extensions. Also, we note that our methods of proof may be modified for the omitted cases of $\balpha$, as discussed in Section~\ref{sec:positive}.

To close the discussions of these results, note that the error terms $O(H^{n-5/2+o(1)})$ is expected, as discussed in Section~\ref{sec:strength}.

\subsection{Structure of paper and outline of arguments}
Section \ref{sec:intro} introduces our problems, provides necessary background, and states our main results. This section also clarifies the conventions and notations used throughout this work.

Section \ref{sec:prelim-arit} collects several classical arithmetical results essential for our arguments.
Section~\ref{sec:bompil} derives some bounds on counting integer solutions in a box $[-H,H]$ to a system of equations of the form (for example) \begin{align*}
    \nu_1^{k_1}\nu_{2}^{k_2}=\nu_3^{k_3}, \quad
    \alpha_1\nu_1+\alpha_2\nu_2+\alpha_3\nu_3=J.
\end{align*} The main tool of our arguments of this section, which can be seen at Section~\ref{sec:proof-bompil}, is based on a quantitative version~\cite{CCDN} of Bombieri-Pila's~\cite{BP} celebrated determinant method for counting integer points on a curve.

Section~\ref{sec:prelim-lat} derives results that are related to our volume-counting arguments, based on related results from Davenport~\cite{Dav} and Marichal-Mossinghoff~\cite{MarMos}.

Section~\ref{sec:upper-bounds} considers the problem of counting vectors in $\cS(H,J;\balpha)$ of a given rank $r$. Following similar arguments as in~\cite{PSSS} with additional power saving that comes from the hyperplane equation $\balpha \cdot \bnu = J$, we obtain a similar upper bound as in~\cite[Proposition~1.5]{PSSS} with some additional conditions related to some inequalities between $k$ and $r$. In particular, our result in this section is sufficient for our use case when $k\ge 5$. For $k<5$, we improve this upper bound in Section~\ref{sec:too-many-zeroes} where we instead use results from Section~\ref{sec:bompil} for $2\le k\le 4$ and obtain a new result for $k=1$.

Section~\ref{sec:small-rank} deals with deriving an asymptotic formula of the number of such vectors, which completes the proof of our main results. Section~\ref{sec:positive} considers similar problems where we instead replace the box $[-H,H]^n$ with the positive integer box $[1,H]^n$. Most arguments can be easily translated to this setup, except for the case where $\balpha$'s coordinates are all positive.

Section~\ref{sec:remark} is dedicated to various remarks related to our results. First, Section~\ref{sec:strength} discusses the strength of our results with respect to the variable dependence and the order of the error terms. Section~\ref{sec:cond-bompil} discusses the restrictions placed on the results of Section~\ref{sec:bompil}. Next, Section~\ref{sec:cor-PSSS} provides a correction to the error term of a formula of~\cite{PSSS}. Section~\ref{sec:allone} discusses an alternate proof of a result in Section~\ref{sec:positive} in the case where $\balpha=\bone_n$ and a related problem of counting multiplicatively dependent partitions.

Finally, Section~\ref{sec:remark-further} is devoted to discussing several related problems and background of our work. In particular, Section~\ref{sec:small-dim} discuss various extensions for $n=2,3$ and Section~\ref{sec:further-ext} discuss some possible further extensions, in particular the constraints of extending our arguments to algebraic integers.
 
We now discuss the outline of our main arguments for counting $S_n(H,J;\balpha)$. We first count the number of such vectors of a given multiplicative rank $r$ in Section~\ref{sec:upper-bounds}, which is done  in Corollary~\ref{cor:Snrupper}. The result depends on $k$, the number of nonzero coefficient of $\balpha$, and is sufficient for our purposes when $k\ge 5$.

For the case $k\le 4$, we instead appeal to our results in Section~\ref{sec:bompil} on bounding the number of solutions to a system of equations based on the multiplicative relations and hyperplane equations. These bounds are based on applying a variant of Bombieri-Pila's determinant method for counting integer points in a certain curve, as seen in Proposition~\ref{prp:CCDN}.

These arguments imply that for most choices of $\balpha $ and $J$, the main term of our asymptotic formula for $S_{n}(H,J;\balpha)$ comes from counting vectors of rank $0$ and $1$. In these cases, we may classify such vectors and convert our problem by using Davenport's~\cite{Dav} celebrated lemma to some volume-counting problems over $\R^{n-1}$. As seen in~\eqref{eqn:Calpha}, our counting problem is equivalent to counting volumes of some regions obtained by intersecting a box and a hyperplane related to the vector $\balpha$. We are practically done here, however if one wants to obtain an explicit coefficient one may use Marichal-Mossinghoff's formula~\cite{MarMos} (see Section~\ref{sec:count-vol}) to obtain such result.

\subsection{Conventions and notations}

We always implicitly assume that $H$ is large enough, in particular so that the logarithmic expression $\log H$ and $\log \log H$ are well defined.

Following~\cite{PSSS}, we only consider vectors with nonzero coordinates. However, our arguments can easily be extended to vectors with zero coordinates, except probably when defining the corresponding multiplicative rank (by following the convention $0^0=1$). 

We use the notation $\Vol_k$ for the volume of a $k$-dimensional polytope inside a $k$-dimensional affine subspace of $\R^n$ in the sense of Lebesgue measure.

We use the Landau symbols $O$ and $o$ and the Vinogradov symbol $\ll$. We recall that the assertions $U=O(V)$ and $U\ll V$  are both equivalent to the inequality $|U|\le cV$ with some positive constant $c$, while $U=o(V)$ means that $U/V\to 0$. Unless stated otherwise, all constants of the error terms in this work  are dependent on the related hyperplane vector $\balpha$.

For any real number $x$, let $\lceil x\rceil$ denote the smallest integer greater than or equal to $x$, and let $\lfloor x\rfloor$ denote the greatest integer less than or equal to $x$. Also, denote $\rad x$ as the product of all prime factors of $x$.

For a finite set $S$ we use $\#{S}$ to denote its cardinality. Also, we always denote vector as boldfaced. In addition, for $\balpha \in \Z^n$, we denote
$\gcd(\balpha) := \gcd(\alpha_1, \alpha_2, \ldots, \alpha_n)$.

\section{Preliminary results}
\subsection{Arithmetic results}\label{sec:prelim-arit}
We first recall a lemma of van der Poorten and Loxton~\cite[Theorem~1]{vdPL} on the exponents related to a multiplicatively dependent vector.

\begin{lemma}\label{lem:exp}
Let $n\ge 2$, and let $\alpha_1,\:\dots,\:\alpha_n$ be multiplicatively dependent nonzero integers of height at most $H$. Then, there is a positive constant $c_n$ which depends only on $n$ and a nonzero vector $\bk\in \Z^n$ with 
\[ \max_{1\le i \le n} |k_i|<c_n(\log H)^{n-1} \]
    such that
\[ \alpha_1^{k_1}\dots\alpha_n^{k_n}=1. \]
\end{lemma}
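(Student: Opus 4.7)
The plan is to translate the equation $\alpha_1^{k_1}\cdots\alpha_n^{k_n}=1$ into a homogeneous system of linear equations over $\Z$ and then apply Siegel's lemma to extract a short integer vector in the kernel. Concretely, I would list the distinct primes $p_1,\dots,p_s$ dividing $\alpha_1\cdots\alpha_n$ and write $\alpha_i=\epsilon_i\prod_{j=1}^{s}p_j^{e_{ij}}$ with $\epsilon_i\in\{\pm 1\}$ and $e_{ij}\in\Z_{\ge 0}$. The bound $|\alpha_i|\le H$ immediately gives $s\le n\log H/\log 2$ and $0\le e_{ij}\le \log H/\log 2$. Setting $E:=(e_{ij})$, the multiplicative relation is equivalent to the linear condition $E^{T}\bk=\mathbf{0}$ together with the parity condition $\sum_{i\colon \epsilon_i=-1}k_i\equiv 0\pmod{2}$.

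To produce a short vector in the lattice $\Lambda:=\{\bk\in\Z^n:E^{T}\bk=\mathbf{0}\}$, I would note that multiplicative dependence forces the $\Q$-rank $r$ of $E^{T}$ to satisfy $r\le n-1$, so $\Lambda$ has rank $n-r\ge 1$. Hadamard's inequality bounds the $r\times r$ minors of $E$ by $c_n(\log H)^{r}$, independently of $s$; Siegel's lemma in the Bombieri--Vaaler form therefore furnishes a nonzero $\bk^{*}\in\Lambda$ with
\[\|\bk^{*}\|_{\infty}\le c_n(\log H)^{r/(n-r)}\le c_n(\log H)^{n-1},\]
the last inequality using that $r\mapsto r/(n-r)$ is increasing on $[0,n-1]$ and attains $n-1$ at $r=n-1$. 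If the resulting product $\prod_i\alpha_i^{k_i^{*}}$ equals $-1$ rather than $+1$, I would simply replace $\bk^{*}$ by $2\bk^{*}$, which still lies in $\Lambda$, now satisfies the parity condition, and at most doubles the norm; the degenerate subcase in which every $\alpha_i=\pm 1$ (so $r=0$) is handled trivially by taking $\bk=(2,\dots,2)$.

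The main obstacle is the Siegel--Minkowski step, specifically avoiding a loss that grows with the number of primes $s$. A naive application of Minkowski's convex body theorem to $\Lambda$ would incur a factor depending on $s$, which can be as large as $n\log H$ and would destroy the exponent $n-1$ in the final estimate. The Bombieri--Vaaler refinement of Siegel's lemma is precisely what circumvents this, since the covolume of $\Lambda$ depends only on the maximal minors of $E$, which Hadamard controls purely in terms of $\log H$ and not in terms of $s$.
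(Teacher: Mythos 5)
Your argument is correct, but it is not the route the paper takes: the paper does not prove Lemma~\ref{lem:exp} at all, it simply quotes it as Theorem~1 of van der Poorten and Loxton \cite{vdPL}, a statement about multiplicative relations among algebraic numbers in a number field, proved there by geometry-of-numbers methods. Your proof is a self-contained, elementary substitute that works precisely because the lemma as stated here concerns rational integers: factoring $\alpha_i=\epsilon_i\prod_j p_j^{e_{ij}}$ turns the relation into the integer kernel of the exponent matrix, multiplicative dependence gives $\operatorname{rank}(E)\le n-1$, Hadamard plus Cauchy--Binet bounds the relevant covolume by $c_n(\log H)^{r}$ with no dependence on the number of primes $s$, Bombieri--Vaaler then yields a nonzero kernel vector of size $c_n(\log H)^{r/(n-r)}\le c_n(\log H)^{n-1}$, and doubling fixes the sign. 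Two small points you should make explicit: Bombieri--Vaaler is stated for a full-row-rank system, so you must first pass to $r$ linearly independent rows of $E^{T}$ (this does not change the rational kernel, hence not the integer kernel, and it is exactly what keeps the count of maximal minors at $\binom{n}{r}$ rather than $\binom{s}{r}$); and the step $(\log H)^{r/(n-r)}\le(\log H)^{n-1}$ requires $\log H\ge 1$, which is harmless since small $H$ can be absorbed into $c_n$ and the paper anyway assumes $H$ large. The trade-off between the two routes is clear: the citation covers the general number-field setting (relevant to the extensions discussed in Section~\ref{sec:further-ext}) with constants depending on the degree, while your argument gives the integer case of the lemma from scratch with only Siegel's lemma as input.
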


Let $x$ and $y>2$ be positive real numbers, and let $\psi(x,y)$ denote the number of positive integers not exceeding $x$ which have no prime factors greater than $y$. Define
\[ Z\coloneqq\left(\log \left(1+\dfrac{y}{\log x}\right)\right)\dfrac{\log x}{\log y}+\left(\log \left(1+\dfrac{\log x}{y}\right)\right)\dfrac{y}{\log y} \]
    and
\[ u \coloneqq (\log x)/(\log y). \]
We have the following asymptotic on $\psi(x,y)$, taken from~\cite{Bru}.

\begin{lemma}\cite[Theorem 1]{Bru}\label{lem:divdeBruijn}
    For $2<y\le x$, we have
    \[ \psi(x,y)=\exp\left(Z\left(1 +O((\log y)^{-1})+O((\log \log x)^{-1})+O((u+1)^{-1})\right)\right). \]
\end{lemma}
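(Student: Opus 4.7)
The plan is to follow de~Bruijn's classical saddle-point argument, since the statement is quoted directly from~\cite[Theorem~1]{Bru}. First I would express $\psi(x,y)$ via Perron's formula as a contour integral
\[
\psi(x,y) \;=\; \frac{1}{2\pi i}\int_{(c)} \zeta(s,y)\,\frac{x^s}{s}\,ds,
\]
where $\zeta(s,y) = \prod_{p \le y}(1-p^{-s})^{-1}$ is the partial zeta function over primes $\le y$. This identity is immediate from the fact that the $y$-smooth integers are enumerated exactly by the Euler product.

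Second, I would locate the saddle point $\alpha = \alpha(x,y)>0$ of the integrand on the positive real axis, namely the unique solution of
\[
\log x \;=\; -\frac{\zeta'(\alpha,y)}{\zeta(\alpha,y)} \;=\; \sum_{p \le y} \frac{\log p}{p^\alpha-1}.
\]
Shifting the contour to $\Re(s) = \alpha$ and applying a Gaussian approximation to the integrand around the saddle yields
\[
\psi(x,y) \;\sim\; \frac{\zeta(\alpha,y)\,x^{\alpha}}{\alpha\,\sigma\sqrt{2\pi}},
\]
where $\sigma^2$ is the second derivative of $\log\zeta(s,y) + s\log x$ at $s=\alpha$; both $\alpha$ and $\sigma^2$ can be estimated by partial summation against the prime number theorem.

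Third, I would match the exponential leading term $\alpha\log x - \sum_{p \le y}\log(1-p^{-\alpha})$ to the stated expression $Z$ by solving the saddle-point equation asymptotically. It is natural to split according to whether $\log x \gg y$ or $\log x \ll y$: in the former regime the saddle $\alpha$ is small and the dominant contribution comes from Mertens-type estimates on $\sum_{p \le y}\log p/(p^\alpha-1)$, which produces the summand $(y/\log y)\log(1 + \log x / y)$; in the latter $\alpha$ is close to $1$ and the contribution $(\log x/\log y)\log(1 + y/\log x)$ emerges from expanding $-\log(1-p^{-\alpha})$ together with $\pi(y)\sim y/\log y$.

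The main obstacle, and the reason the result is genuinely hard, is proving \emph{uniformity} across the entire range $2 < y \le x$: the saddle $\alpha$ varies continuously from near $0$ when $y$ is only a small power of $\log x$ to near $1$ when $y$ is close to $x$, and one must patch the two regimes together without losing the savings. The three error terms $O((\log y)^{-1})$, $O((\log\log x)^{-1})$, and $O((u+1)^{-1})$ arise respectively from the prime number theorem error entering $\pi(y)$, from the Gaussian approximation of the contour integral, and from the interpolation between the small-$u$ and large-$u$ regimes. Since this lemma is only cited rather than reproved in our paper, I would defer the technical execution of the saddle-point estimates to~\cite{Bru}.
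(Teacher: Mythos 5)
The paper does not prove Lemma~\ref{lem:divdeBruijn} at all: it is quoted verbatim as \cite[Theorem~1]{Bru} and used as a black box, so your decision to defer the technical execution to the reference is exactly what the paper does, and in that sense there is nothing to fault. Two remarks on your sketch, though. First, the route you outline (Perron's formula for $\psi(x,y)$ with the partial Euler product $\zeta(s,y)=\prod_{p\le y}(1-p^{-s})^{-1}$, a saddle point $\alpha$ solving $\sum_{p\le y}\log p/(p^{\alpha}-1)=\log x$, and a Gaussian approximation giving $\psi(x,y)\sim x^{\alpha}\zeta(\alpha,y)/(\alpha\sigma\sqrt{2\pi})$) is really the later Hildebrand--Tenenbaum method; it does yield a statement at least as strong as the lemma, uniformly in $2<y\le x$, so as a proof strategy it is viable, but it is not how de~Bruijn argues in \cite{Bru}. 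His proof of Theorem~1 there is more elementary: the upper bound is Rankin's trick, $\psi(x,y)\le x^{\sigma}\prod_{p\le y}(1-p^{-\sigma})^{-1}$ with a near-optimal explicit choice of $\sigma$, matched by an elementary lower-bound count of $y$-smooth integers, and the quantity $Z$ arises from evaluating the optimized exponent; no contour integration or Gaussian approximation is needed for a result that only controls $\log\psi(x,y)$. Second, your attribution of the three error terms to specific sources (prime number theorem, Gaussian approximation, interpolation between regimes) is speculative and would not survive contact with the actual argument in either approach; since you are deferring to the literature anyway, it would be cleaner simply to state the lemma as cited rather than to guess at the provenance of each error term.
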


We use Lemma~\ref{lem:divdeBruijn} in the following form, where we count positive integers whose prime factors divide a given integer.

\begin{lemma}\label{lem:divdeBruijn-div}
Let $x$ and $y > 2$ be positive integers, and let $\psi_0(x, y)$ denote the number of positive integers not exceeding $x$ whose prime factors divide $y$. Then there exists an absolute constant $c$ such that
\[ \psi_0(x, y) < \exp(c \log \max\{x, y\}/\log \log y). \]
\end{lemma}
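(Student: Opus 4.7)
The plan is to bound $\psi_0(x,y)$ via Rankin's trick, which fits naturally into the analytic framework underlying Lemma~\ref{lem:divdeBruijn}. For any $\epsilon \in (0,1)$, each integer $n \leq x$ counted in $\psi_0(x,y)$ satisfies $1 \leq (x/n)^\epsilon$, so
\[
\psi_0(x,y) \;\leq\; x^\epsilon \sum_{\substack{n \geq 1 \\ p \mid n \Rightarrow p \mid y}} n^{-\epsilon} \;=\; x^\epsilon \prod_{p \mid y} \frac{1}{1-p^{-\epsilon}}.
\]
Taking logarithms reduces the problem to a sharp choice of $\epsilon$ together with an effective bound on the Euler product.

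I would take $\epsilon = 1/\log\log y$, so that $\log(x^\epsilon) = \log x/\log\log y$ already supplies the desired contribution from $x$. It then remains to show that
\[
\Sigma := \sum_{p \mid y}\log\!\frac{1}{1-p^{-\epsilon}} \;\ll\; \frac{\log y}{\log\log y},
\]
and I would split this sum at the threshold $P_0 := 2^{1/\epsilon} = (\log y)^{\log 2}$, which is characterized by $p \leq P_0 \iff p^{-\epsilon} \geq 1/2$. For primes $p \mid y$ with $p \leq P_0$, the count is at most $\pi(P_0) \ll (\log y)^{\log 2}/\log\log y$, and the elementary inequality $1-e^{-t}\geq t/2$ on $[0,1]$ gives $\log(1/(1-p^{-\epsilon})) \leq \log(2/(\epsilon\log p)) = O(\log(1/\epsilon)) = O(\log\log\log y)$ for each such $p$, so their total contribution is $o(\log y/\log\log y)$. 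For primes $p \mid y$ with $p > P_0$ one has $\log(1/(1-p^{-\epsilon})) \leq 2p^{-\epsilon} \leq 1$; moreover $\prod_{p \mid y} p \leq y$ forces the number of such primes to be at most $\log y/\log P_0 = O(\log y/\log\log y)$, contributing $O(\log y/\log\log y)$.

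Summing the two regimes gives $\Sigma = O(\log y/\log\log y)$, and combining with $\log(x^\epsilon) = \log x/\log\log y$ yields $\log\psi_0(x,y) = O(\log\max(x,y)/\log\log y)$, as required. The hypothesis $y>2$ is precisely what ensures $\log\log y > 0$, so that the threshold $P_0$ is meaningful. The only delicate point is balancing the two regimes through the correct choice of $\epsilon$; no further obstacle is anticipated. As a sanity check, the bound is essentially tight when $y$ is a primorial, where the contribution from the primes near $P_0$ matches the stated right-hand side.
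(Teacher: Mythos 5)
Your proof is correct, but it follows a genuinely different route from the paper. The paper reduces $\psi_0(x,y)$ to a smooth-number count: it bounds the number of distinct prime divisors of $y$ by $O(\log y/\log\log y)$ (Hardy--Wright/Robin), replaces the primes dividing $y$ by the first $\lceil c_0\log y/\log\log y\rceil$ primes to get $\psi_0(x,y)\le\psi(x,p)$ with $p\ll\log y$ by the Prime Number Theorem, and then invokes de Bruijn's asymptotic (Lemma~\ref{lem:divdeBruijn}). You instead use Rankin's trick directly on the Euler product $\prod_{p\mid y}(1-p^{-\epsilon})^{-1}$ with $\epsilon=1/\log\log y$, splitting the primes dividing $y$ at $P_0=2^{1/\epsilon}$; the small primes are controlled by $\pi(P_0)\ll(\log y)^{\log 2}/\log\log y$ and the large ones by the trivial bound $\#\{p\mid y: p>P_0\}\le\log y/\log P_0$. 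Your argument is self-contained and elementary (it needs neither de Bruijn's theorem nor the $\omega(y)$ bound, and would yield explicit constants), whereas the paper's is shorter by outsourcing the analytic work to Lemma~\ref{lem:divdeBruijn}. Two cosmetic points: your choice $\epsilon=1/\log\log y$ can exceed $1$ when $3\le y<e^e$, contradicting your opening restriction $\epsilon\in(0,1)$ --- but Rankin's inequality needs only $\epsilon>0$ (and then the small-prime regime is empty), or one can dispose of bounded $y$ trivially; and the per-prime bound in the small regime should read $O(1+\log\log\log y)$ rather than $O(\log\log\log y)$ to cover the boundary, which does not affect the conclusion.
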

\begin{proof}
First, by a classical result due to~\cite{HW}, there are at most $c_0 \log y/\log \log y$ distinct prime numbers dividing $y$, where $c_0$ is an absolute constant (see also~\cite[Th\'eor\`eme 11]{Rob} for an explicit value of $c_0$). Thus we have $\psi_0(x, y) \leq \psi(x, p)$, where $p$ is the $\lceil c_0 \log y/\log \log y \rceil$-th smallest prime. By the Prime Number Theorem, there exists an absolute constant $c_1$ such that $p \leq c_1 \log y$. Hence by Lemma~\ref{lem:divdeBruijn}, we have
\[ \psi_0(x, y) \leq \psi(x, p) \leq \psi(x, c_1 \log y) < \exp(c \log \max\{x, y\}/\log \log y) \]
    for some absolute constant $c$, as desired.
\end{proof}

To describe the case $k=2$ also need the following result on the finiteness of a Pell-like equation related to multiplicatively dependent pairs of integers. This result generalises~\cite[Section~4]{DS2018}, which considers the case $(\alpha,\beta)=(1,1)$; see also \cite{BL2006,SS} for related equations and Section~\ref{sec:remark-further} for related discussions. Note that this lemma is a special case of \cite[Theorem 1]{BMZ99}.

\begin{lemma}\label{lem:extra-count-k=2}
For any nonzero integers $\alpha, \beta, J$, there exists finitely many multiplicatively dependent pairs $(x, y)$ of integers with $\alpha x + \beta y = J$.
\end{lemma}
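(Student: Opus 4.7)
The plan is to reduce the statement to an elementary finiteness bound via a classical characterization of multiplicatively dependent pairs of nonzero integers: a pair $(x,y) \in (\Z \setminus \{0\})^2$ is multiplicatively dependent if and only if at least one of the following holds: (i) $x \in \{-1,1\}$; (ii) $y \in \{-1,1\}$; or (iii) there exist an integer $c \ge 2$, positive integers $m, n$, and signs $\epsilon_1, \epsilon_2 \in \{-1,1\}$ such that $x = \epsilon_1 c^m$ and $y = \epsilon_2 c^n$. I would first prove this characterization: the ``if'' direction is immediate, and the ``only if'' direction follows by taking absolute values of a dependence relation $x^a y^b = 1$, observing that $a, b$ must be nonzero and of opposite signs (else one of $|x|, |y|$ equals $1$), reducing to $|x|^{a'} = |y|^{b'}$ with $\gcd(a', b') = 1$, and applying unique factorization prime-by-prime to extract a common base $c$.

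Using this classification, the count splits into three (possibly overlapping) cases. In cases (i) and (ii), the condition $x = \pm 1$ (resp.\ $y = \pm 1$) combined with $\alpha x + \beta y = J$ and $\beta \neq 0$ (resp.\ $\alpha \neq 0$) yields at most two solutions each. The substantive case is (iii): substituting $x = \epsilon_1 c^m$ and $y = \epsilon_2 c^n$ and assuming without loss of generality that $m \le n$, the equation rewrites as
\[
c^m \bigl( \epsilon_1 \alpha + \epsilon_2 \beta c^{n-m} \bigr) = J.
\]
Since $J \neq 0$, this forces $c^m \mid J$ and in particular $c^m \le |J|$, which bounds both $c \in [2, |J|]$ and $m \in [1, \log_2 |J|]$. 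For each of the finitely many tuples $(\epsilon_1, \epsilon_2, c, m)$, the identity $\epsilon_1 \alpha + \epsilon_2 \beta c^{n-m} = J/c^m$ determines $c^{n-m}$ uniquely, and hence $n$ uniquely whenever a solution exists. Summing over the finitely many tuples yields the claimed finiteness.

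No step in this argument poses a serious obstacle; the main care required is bookkeeping over signs and the (elementary but crucial) verification of the characterization in case (iii), particularly ensuring that the extracted base $c$ is a genuine integer rather than merely a rational. As the authors note, one may alternatively invoke the much more general \cite[Theorem~1]{BMZ99} of Bombieri, Masser, and Zannier on multiplicatively dependent points on curves in tori, which specialises to the present statement for the line $\alpha x + \beta y = J$ inside $\Gm^2$. I would nevertheless prefer the elementary route above, since it is self-contained and even produces an explicit polylogarithmic bound on the number of solutions in terms of $|J|$, which may be useful in later sections where quantitative dependence on $J$ matters.
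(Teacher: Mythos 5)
Your proof is correct and follows essentially the same route as the paper's: both reduce a multiplicatively dependent pair with $|x|,|y|>1$ to common powers of a single integer base, deduce from $\alpha x+\beta y=J\neq 0$ that a power of the base divides $J$ (bounding the base and the smaller exponent), and then observe that the remaining exponent is uniquely determined. The only cosmetic difference is that you carry the signs $\epsilon_1,\epsilon_2$ explicitly, whereas the paper disposes of them at the outset by reducing to positive $x,y$; the citation of \cite{BMZ99} as an alternative is likewise already noted in the paper.
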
    
\begin{proof}
By considering all possible signs of $x$ and $y$, it suffices to count pairs $(x, y)$ with $x, y > 0$. Furthermore, we may also assume that $x, y > 1$. We further assume that $\gcd(\alpha, \beta) = 1$.

A multiplicatively dependent pair $(x, y)$ of integers with $x, y > 1$ takes the form $(z^k, z^m)$ for some positive integers $z > 1$, $k$, and $m$. Clearly, $\alpha z^k + \beta z^m = J$ implies $z \mid J$, so we have finitely many choices for $z$.

Now fix $z$. We have $z^{\min\{k, m\}} \mid J$, so there are only finitely many choices for $\min\{k, m\}$. However, fixing $k$ uniquely determines $m$, and fixing $m$ uniquely determines $k$, so there are finitely many choices for the pair $(k, m)$ for fixed $z$. This proves Lemma~\ref{lem:extra-count-k=2}.
\end{proof}

\subsection{Counting points on systems of equations}\label{sec:bompil} 

As mentioned previously, one of our main arguments on this paper depends on obtaining some bounds on the number of solutions to a system of equations that consists of the hyperplane and multiplicative dependence equations. When this system has a small number of variables, we use an argument based on Bombieri-Pila's~\cite{BP} determinant method for counting integer points of a curve in a box. In particular, we use the following explicit version from~\cite{CCDN}.

\begin{lemma}\label{lem:bompil-CCDN}\cite[Theorem~3]{CCDN}
Let $f \subseteq \A_{\Q}^2$ be an integral affine curve of degree $d$. Then there exists an absolute constant $c > 0$ such that for all $H \ge 1$, the number of integer points in $f$ with naive height at most $H$ is at most $cd^3 H^{1/d}(\log H+d)$.
\end{lemma}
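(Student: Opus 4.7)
The plan is to follow the Bombieri--Pila determinant method in the quantitative refinement of \cite{CCDN}, tracking the explicit dependence on both $H$ and $d$. First I would reduce to the case that $f$ is absolutely irreducible by factoring over $\overline{\Q}$ into geometric components of total degree $d$; since there are at most $d$ such components, treating them separately costs only a factor of $d$ in the final bound. I would similarly remove the $O(d^{2})$ singular points of $f$ and argue on the smooth part of each component.

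For an absolutely irreducible $f$, I would parameterise the real locus of $f$ by smooth arcs (via the implicit function theorem off the singular locus) and partition each arc into sub-pieces each contained in a sub-box $B$ of side length $\delta \asymp H^{1 - 1/d}$. On each such $B$ I would apply the determinant step: let $V$ be the space of bivariate polynomials of total degree at most $D$, with $D$ of order $d$, and put $M = \dim V$. If $B$ contains more than $M$ integer points $(x_{i}, y_{i})$ lying on $f$, then any $M \times M$ minor of the evaluation matrix $\bigl(x_{i}^{a} y_{i}^{b}\bigr)_{i, (a, b)}$ is an integer, while Taylor-expanding the columns around the centre of $B$ and invoking Hadamard's inequality bounds the same minor in absolute value by a quantity decreasing in $\delta$ and $D$. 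For the chosen parameters this upper bound is strictly less than $1$, forcing the minor to vanish. Hence all integer points of $f$ inside $B$ lie on a common auxiliary curve $g$ with $\deg g \le D$ that does not share $f$ as a component, and B\'ezout gives at most $dD$ such points in $B$.

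Summing over the $\ll H^{1/d}$ sub-boxes along each smooth arc produces a bound of order $dD \cdot H^{1/d}$, and optimising $D$ as a function of $H$ contributes the factor $\log H + d$ (the $\log H$ from the optimal choice of $D$, and the $+d$ from absorbing the boundary case where $D$ is forced to equal $d$). Combining with the two factors of $d$ coming from the reductions to absolutely irreducible components and to smooth arcs yields the stated $c d^{3} H^{1/d}(\log H + d)$. The main obstacle, and the main content of \cite{CCDN} beyond the original argument in \cite{BP}, is reaching the cubic exponent $d^{3}$: a naive execution of the determinant method yields a substantially weaker polynomial dependence on $d$, and closing this gap requires both a carefully chosen monomial basis (so that the Hadamard estimate loses as little as possible in terms of degree) and a sharpened determinant inequality that keeps track of the bivariate structure rather than treating the two coordinates symmetrically.
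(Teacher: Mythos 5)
The paper does not actually prove this statement: Lemma~\ref{lem:bompil-CCDN} is quoted verbatim from \cite[Theorem~3]{CCDN} and used as a black box, so there is no internal argument to compare yours against. Judged as a proof, your proposal is a plan rather than an argument: every quantitative step that makes the result true with the stated uniformity in $d$ is asserted but not carried out. Concretely, (i) the claim that the real locus splits into smooth arcs which can be covered by $\ll H^{1/d}$ sub-boxes of side $\asymp H^{1-1/d}$ hides the dependence of the number of arcs on $d$ — in the original argument of \cite{BP} this decomposition is exactly where the constant acquires a very bad (non-polynomial) dependence on $d$, and obtaining a polynomially-in-$d$ decomposition is the heart of the later quantitative works; (ii) the determinant/Hadamard estimate ``strictly less than $1$ for the chosen parameters'' is not verified, and the choice of $D$ and the bookkeeping that produces the factor $\log H + d$ and the exponent $3$ on $d$ are precisely what you defer; indeed you explicitly name the cubic dependence on $d$ as ``the main obstacle'' and then do not close it. A sketch that postpones exactly the content of \cite{CCDN} is not a proof of the lemma.

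Two smaller points. First, your reduction to geometrically irreducible components is slightly off: the lemma is stated for an \emph{integral} curve over $\Q$, and if such a curve is not geometrically integral its integer points lie on the pairwise intersections of the conjugate components, so they number at most $O(d^2)$ by B\'ezout; one does not ``run the method separately on each geometric component at the cost of a factor $d$'', since those components need not be defined over $\Q$. Second, after the determinant step you should note that the auxiliary curve $g$ does not contain the (irreducible) curve $f$ as a component before invoking B\'ezout; you say this, but it relies on $f$ being integral, which is why the paper applies the lemma only to irreducible factors of degree at least $2$ in Proposition~\ref{prp:CCDN}. If your intention is merely to justify the citation, the honest course is to do what the paper does and cite \cite[Theorem~3]{CCDN}; if you intend a self-contained proof, the quantitative determinant estimate and the $d$-uniform arc decomposition must actually be supplied.
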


Lemma~\ref{lem:bompil-CCDN} will be used in the following form that is sufficient for our purposes.

\begin{proposition}\label{prp:CCDN}
Let $f \subseteq \A_{\Q}^2$ be an affine curve of degree $d$ such that any linear factor of $f$ does not have any integer points. Then there exist absolute constants $C_0, C_1 > 0$ such that for all $H \ge 1$, the number of integer points in $f$ with naive height at most $H$ is at most
\[ C_0 dH^{1/2}(\log H + 2) + C_1 d^3 H^{1/3} (\log H + d). \]
\end{proposition}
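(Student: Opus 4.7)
The plan is to decompose $f$ into irreducible components and apply Lemma~\ref{lem:bompil-CCDN} to each one, exploiting the hypothesis on linear factors to discard the only case in which the lemma would give a useless bound. Write $f = f_1 \cup \dots \cup f_s$, where each $f_j \subseteq \A^2_\Q$ is an integral affine curve of some degree $d_j \ge 1$, and $\sum_{j=1}^s d_j = d$. Every integer point of $f$ lies on some $f_j$, so it suffices to sum the bounds from Lemma~\ref{lem:bompil-CCDN} over the components. By hypothesis the components with $d_j = 1$ contain no integer points, so they contribute nothing to the total.

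I would then separate the remaining components into two groups according to their degree. For $d_j = 2$, Lemma~\ref{lem:bompil-CCDN} gives at most $8c\, H^{1/2}(\log H + 2)$ integer points on $f_j$, where $c$ is the absolute constant from that lemma. Since at most $\lfloor d/2 \rfloor$ components have degree two, their total contribution is bounded by $4cd\, H^{1/2}(\log H + 2)$, producing the first term with $C_0 = 4c$. For $d_j \ge 3$, the monotonicity $H^{1/d_j} \le H^{1/3}$ yields the componentwise bound
\[ c\, d_j^3\, H^{1/d_j}(\log H + d_j) \le c\, d_j^3\, H^{1/3}(\log H + d). \]
Summing and using the elementary inequality $\sum_j d_j^3 \le (\max_j d_j)^2 \sum_j d_j \le d \cdot d^2 = d^3$ gives $c\, d^3\, H^{1/3}(\log H + d)$, producing the second term with $C_1 = c$.

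There is no real obstacle to this argument: the only point worth emphasizing is the role of the hypothesis on linear factors. Lemma~\ref{lem:bompil-CCDN} applied to a line ($d_j = 1$) returns a bound of order $H(\log H + 1)$, which would completely overwhelm the asserted $H^{1/2}$-type estimate; the hypothesis is placed precisely so that any such line contributes nothing, and once these components are discarded, the componentwise summation above closes the argument with absolute constants $C_0, C_1$.
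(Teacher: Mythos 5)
Your proposal is correct and follows essentially the same route as the paper: factor $f$ into irreducible components, discard the linear ones using the hypothesis, apply Lemma~\ref{lem:bompil-CCDN} to each component, and split the sum into degree-two components (at most $d/2$ of them, giving the $H^{1/2}$ term with $C_0=4c$) and components of degree at least three (giving the $H^{1/3}$ term with $C_1=c$). The constants and the bookkeeping, including $\sum_j d_j^3 \le d^3$, match the paper's argument.
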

\begin{proof}
Let $f=g_1\dots g_w$ be a factorisation of $f$ as irreducible curves. Note that each integer points of $f$ must be on one of $g_1, \ldots, g_w$. From our assumption, we may see that the curves $g_i$ with $\deg g_i=1$ admit no integer points. Then, when $\deg g_i \geq 2$,  we use Lemma~\ref{lem:bompil-CCDN} to conclude that this curve has at most
\[ c (\deg g_i)^3 H^{1/\deg g_i}(\log H + \deg g_i) \]
integer points of height at most $H$. Thus the number of integer points on $f$ with height at most $H$ is at most 
\begin{align*}
    & \sum_{i, \deg g_i \geq 2} c (\deg g_i)^3 H^{1/\deg g_i}(\log H + \deg g_i) \\
    \leq\;& \#\{i : \deg g_i = 2\} \cdot 8c H^{1/2}(\log H + 2) + \sum_{i, \deg g_i \geq 3} c (\deg g_i)^3 H^{1/3}(\log H + d) \\
    \leq\;& 4cd H^{1/2}(\log H + 2) + cd^3 H^{1/3}(\log H + d).
\end{align*}
Picking $C_0 = 4c$ and $C_1 = c$ completes our argument.
\end{proof}

We are now ready to state our main results of this section.
\begin{theorem}\label{thm:bompil-2var}
Let $A$, $B$, $\alpha_1$, $\alpha_2$, $J$  be nonzero integers and $k_1$, $k_2$, $k_3$ be positive integers. Then, the number of integer solutions $(\nu_1,\nu_2,\nu_3)$ to the system of equations
\begin{align}\label{eqn:fatal2var-1}
    &A\nu_1^{k_1} \nu_2^{k_2}= B\nu_3^{k_3}, \\
    &\alpha_1\nu_1+\alpha_2\nu_2=J, \label{eqn:fatal2var-linear}
\end{align}
    such that $0<|\nu_i|\le H$ for $i=1,2,3$ is bounded above  by $$C_{2}(k_1+k_2+k_3)H^{1/2} (\log H+2)+C_3(k_1+k_2+k_3)^3H^{1/3} (\log H+k_1+k_2+k_3)$$ for some absolute constants $C_2,C_3>0$.
    
In addition, the same result also holds if we replace~\eqref{eqn:fatal2var-1} with \begin{equation}\label{eqn:fatal2var-2}
    A \nu_1^{k_1} \nu_3^{k_3} = B\nu_2^{k_2}.
\end{equation}
\end{theorem}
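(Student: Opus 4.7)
The plan is to eliminate $\nu_2$ via the linear equation~\eqref{eqn:fatal2var-linear}, reducing the system to a single bivariate polynomial equation in $(\nu_1,\nu_3)$, and then invoke Proposition~\ref{prp:CCDN}. Since $\alpha_2\ne0$, I can write $\nu_2=(J-\alpha_1\nu_1)/\alpha_2$; substituting into~\eqref{eqn:fatal2var-1} and clearing the denominator gives
\[
  f(\nu_1,\nu_3)\coloneqq A\,\nu_1^{k_1}(J-\alpha_1\nu_1)^{k_2}-B\alpha_2^{k_2}\nu_3^{k_3},
\]
a nonzero polynomial (its $\nu_3^{k_3}$ coefficient $-B\alpha_2^{k_2}$ is nonzero) of total degree $\max(k_1+k_2,k_3)\le k_1+k_2+k_3$. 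Every integer solution $(\nu_1,\nu_2,\nu_3)$ of the system projects to an integer point $(\nu_1,\nu_3)$ on $\{f=0\}$ with $|\nu_1|,|\nu_3|\le H$, so it suffices to bound the number of integer points on this affine curve.

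The main obstacle is checking the hypothesis of Proposition~\ref{prp:CCDN}: that every linear factor of $f$ carries no integer points. I will establish the stronger statement that $f$ admits no linear factor over $\QQ$ at all, which is precisely where the assumption $J\ne0$ enters. A hypothetical factor $a\nu_1+c$ independent of $\nu_3$ is ruled out at once, since specialising $\nu_1=-c/a$ leaves the non-vanishing polynomial $-B\alpha_2^{k_2}\nu_3^{k_3}$ in $\nu_3$. For a factor $L=a\nu_1+b\nu_3+c$ with $b\ne0$, substituting $\nu_3=-(a\nu_1+c)/b$ forces the polynomial identity
\[
  A\,\nu_1^{k_1}(J-\alpha_1\nu_1)^{k_2}=\frac{(-1)^{k_3}B\alpha_2^{k_2}}{b^{k_3}}(a\nu_1+c)^{k_3}
\]
in $\QQ[\nu_1]$. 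The left-hand side has the two distinct roots $0$ and $J/\alpha_1$ (distinct because $J\ne0$), of positive multiplicities $k_1$ and $k_2$, whereas the right-hand side has at most one root. This contradicts unique factorisation, completing the verification.

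The bound then follows by applying Proposition~\ref{prp:CCDN} with $d\le k_1+k_2+k_3$; the constants $C_2$, $C_3$ may be taken to be the absolute constants $C_0$, $C_1$ furnished by that proposition. The second assertion, with~\eqref{eqn:fatal2var-1} replaced by~\eqref{eqn:fatal2var-2}, is proved in exactly the same way after forming
\[
  \tilde f(\nu_1,\nu_3)\coloneqq A\alpha_2^{k_2}\nu_1^{k_1}\nu_3^{k_3}-B(J-\alpha_1\nu_1)^{k_2},
\]
still of total degree at most $k_1+k_2+k_3$. The no-linear-factor check is analogous: the assumption $J\ne0$ again guarantees that $0$ (root of the left-hand side of multiplicity $k_1$) and $J/\alpha_1$ (root of the right-hand side of multiplicity $k_2$) are distinct, rendering a single linear factor impossible.
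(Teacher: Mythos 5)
Your proof is correct and follows essentially the same route as the paper: substitute the linear relation to obtain the plane curves $f(\nu_1,\nu_3)$ and $\tilde f(\nu_1,\nu_3)$, show they admit no linear factors, and apply Proposition~\ref{prp:CCDN} with $d\le k_1+k_2+k_3$. The only cosmetic difference is that you exclude linear factors by a root-multiplicity comparison (the roots $0$ and $J/\alpha_1$ are distinct because $J\ne 0$) rather than the paper's coefficient comparison; both arguments establish the same, slightly stronger, fact that the curves have no rational linear factors at all.
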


\begin{theorem}\label{thm:bompil-3var}
    Let $A$, $B$, $\alpha_1$, $\alpha_2$, $\alpha_3$, $J$ be nonzero integers and $k_1$, $k_2$, $k_3$ be positive integers. Then, the number of integer solutions $(\nu_1,\nu_2,\nu_3)$ to the system of equations \begin{align}\label{eqn:fatal1}
        &A\nu_1^{k_1} \nu_2^{k_2}= B\nu_3^{k_3},\\
        &\alpha_1\nu_1+\alpha_2\nu_2+\alpha_3\nu_3=J, \label{eqn:fatal2}
    \end{align}
    such that $\alpha_1\nu_1,\alpha_2\nu_2 \ne J$ and $0<|\nu_i|\le H$  for $i=1,2,3$ is bounded above  by $$C_{2}(k_1+k_2+k_3)H^{1/2} (\log H+2)+C_3(k_1+k_2+k_3)^3H^{1/3} (\log H+k_1+k_2+k_3)$$ for some absolute constants $C_2,C_3>0$.
\end{theorem}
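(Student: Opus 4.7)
The plan is to eliminate $\nu_3$ and reduce to a single two-variable polynomial equation, then apply Proposition~\ref{prp:CCDN}. Multiplying~\eqref{eqn:fatal1} by $\alpha_3^{k_3}$ and substituting $\alpha_3\nu_3 = J - \alpha_1\nu_1 - \alpha_2\nu_2$ from~\eqref{eqn:fatal2} yields
\[
F(\nu_1,\nu_2) \coloneqq A\alpha_3^{k_3}\nu_1^{k_1}\nu_2^{k_2} - B(J-\alpha_1\nu_1-\alpha_2\nu_2)^{k_3} \in \Z[\nu_1,\nu_2],
\]
a polynomial of total degree at most $d \coloneqq k_1+k_2+k_3$. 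Every integer solution of the system determines an integer point on the affine curve $\{F=0\}\subset\A^2_\Q$ with $|\nu_1|,|\nu_2|\le H$, so it suffices to bound the number of such points.

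To apply Proposition~\ref{prp:CCDN} I must understand the linear factors of $F$. The specializations
\[
F(0,\nu_2) = -B(J-\alpha_2\nu_2)^{k_3}, \qquad F(\nu_1,0) = -B(J-\alpha_1\nu_1)^{k_3}
\]
are both nonzero since $B,\alpha_1,\alpha_2,J\ne 0$. Hence any linear factor $g=a\nu_1+b\nu_2+c$ of $F$, when restricted to $\nu_1=0$, must be a nonzero constant or a scalar multiple of $\nu_2-J/\alpha_2$; an analogous dichotomy holds for $\nu_2=0$. A short case split on whether $a$ and $b$ vanish then forces $g$ to be a scalar multiple of one of $\alpha_1\nu_1-J$, $\alpha_2\nu_2-J$, or $M \coloneqq J-\alpha_1\nu_1-\alpha_2\nu_2$. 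The last option is impossible: restricting $F$ to $M=0$ gives $A\alpha_3^{k_3}\nu_1^{k_1}\nu_2^{k_2}$, which does not vanish identically on the line $M=0$ since $A,\alpha_3\ne 0$ and (because $\alpha_1,\alpha_2,J\ne 0$) neither coordinate $\nu_i$ is identically zero along this line.

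Writing $F = (\alpha_1\nu_1-J)^{e_1}(\alpha_2\nu_2-J)^{e_2}\,G(\nu_1,\nu_2)$ with $e_1,e_2\ge 0$ chosen maximally, the quotient $G$ has no linear factors over $\overline{\Q}$ and hence none over $\Q$. Any integer solution satisfying the exclusions $\alpha_1\nu_1\ne J$ and $\alpha_2\nu_2\ne J$ of the theorem therefore gives an integer point on $\{G=0\}$, and Proposition~\ref{prp:CCDN} applies to $G$ with its hypothesis vacuously satisfied, producing the bound
\[
C_0\deg G\cdot H^{1/2}(\log H + 2) + C_1(\deg G)^3 H^{1/3}(\log H + \deg G).
\]
Using $\deg G \le d = k_1+k_2+k_3$ and setting $C_2 = C_0$, $C_3 = C_1$ (both absolute) concludes the argument.

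The main technical obstacle is the linear-factor classification in the middle paragraph: establishing that every linear factor of $F$ is a scalar multiple of $\alpha_1\nu_1-J$, $\alpha_2\nu_2-J$, or $M$, and crucially ruling out the scalar-multiple-of-$M$ case so that the only troublesome linear components of $F$ are exactly those already excised by the hypotheses $\alpha_1\nu_1\ne J$ and $\alpha_2\nu_2\ne J$. Once this classification is in hand, the remaining steps---substitution, degree estimate, and invocation of Proposition~\ref{prp:CCDN}---are routine.
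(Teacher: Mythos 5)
Your proof is correct and follows essentially the same route as the paper: eliminate $\nu_3$ via the linear equation to get the plane curve $A\alpha_3^{k_3}\nu_1^{k_1}\nu_2^{k_2}-B(J-\alpha_1\nu_1-\alpha_2\nu_2)^{k_3}$, show its only possible linear factors are scalar multiples of $\alpha_1\nu_1-J$ and $\alpha_2\nu_2-J$ (exactly the loci removed by the hypothesis), and apply Proposition~\ref{prp:CCDN} to the remaining factor. Your classification of linear factors by restricting to the coordinate axes is just a slightly different packaging of the same case analysis the paper performs directly on factors of the form $y-Dx-E$, $x-D$, $y-D$.
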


\begin{theorem}\label{thm:bompil-4var}
Let $J$ be a nonzero integer and $\balpha \in \Z^4$ such that at most one of $\alpha_1$, $\alpha_2$, $\alpha_3$ and $\alpha_4$ are zero, and $k_1$, $k_2$, $k_3$, $k_4$ be positive integers.
Then the number of integer solutions $(\nu_1,\nu_2,\nu_3,\nu_4)$ to the system of equations
\begin{equation}\label{eqn:fatal-4var}\begin{split}
    &\nu_1^{k_1} \nu_2^{k_2}= \nu_3^{k_3}\nu_4^{k_4},\\
    &\alpha_1\nu_1+\alpha_2\nu_2+\alpha_3\nu_3+\alpha_4\nu_4=J,
\end{split}
\end{equation}
    such that $0<|\nu_i|\le H$  for $i=1,2,3,4$ is bounded above  by $$C_{2}(k_1+k_2+k_3+k_4)H^{3/2} (\log H+2)+C_3(k_1+k_2+k_3+k_4)^3H^{4/3} (\log H+k_1+k_2+k_3+k_4)$$ for some absolute constants $C_2,C_3>0$.
\end{theorem}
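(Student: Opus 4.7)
The plan is to reduce Theorem~\ref{thm:bompil-4var} to Theorem~\ref{thm:bompil-3var} by fixing one coordinate and summing over its at most $2H$ values. Since the hypothesis allows at most one coordinate of $\balpha$ to vanish, there exists an index $j \in \{1,2,3,4\}$ with $\alpha_i \neq 0$ for all $i \neq j$; after relabeling I may assume $j = 4$. Fixing $\nu_4 \in \{-H,\ldots,H\} \setminus \{0\}$ and writing $J' := J - \alpha_4 \nu_4$, the system~\eqref{eqn:fatal-4var} becomes
\[ \nu_1^{k_1} \nu_2^{k_2} = \nu_4^{k_4} \nu_3^{k_3}, \qquad \alpha_1 \nu_1 + \alpha_2 \nu_2 + \alpha_3 \nu_3 = J', \]
which has the shape required by Theorem~\ref{thm:bompil-3var} with $A = 1$ and $B = \nu_4^{k_4}$ both nonzero, and $\alpha_1, \alpha_2, \alpha_3$ nonzero by the choice of $j$. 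For every $\nu_4$ in the generic regime, namely $J' \neq 0$ together with $\alpha_i \nu_i \neq J'$ for $i = 1,2$, Theorem~\ref{thm:bompil-3var} contributes at most $C_2 K' H^{1/2}(\log H + 2) + C_3 K'^3 H^{1/3}(\log H + K')$ solutions, where $K' = k_1 + k_2 + k_3 \leq K := k_1 + k_2 + k_3 + k_4$. Summing over the $\leq 2H$ admissible values of $\nu_4$ produces exactly the claimed main bound.

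Next I would handle the degenerate configurations that lie outside this regime. The case $J' = 0$ occurs for at most one $\nu_4$, namely $\nu_4 = J/\alpha_4$ when $\alpha_4 \neq 0$, and not at all when $\alpha_4 = 0$ (since then $J' = J \neq 0$). For that fixed $\nu_4$ I introduce $\nu_1$ as an additional free parameter (at most $2H$ choices), substitute $\nu_3 = -(\alpha_1 \nu_1 + \alpha_2 \nu_2)/\alpha_3$ into the multiplicative relation, and obtain a polynomial in $\nu_2$ of degree $\max(k_2, k_3) \leq K$ whose value at $\nu_2 = 0$ equals $\nu_4^{k_4}(-\alpha_1 \nu_1/\alpha_3)^{k_3} \neq 0$; this yields $O(KH)$ solutions. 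When instead $J' \neq 0$ but $\alpha_1 \nu_1 = J'$, the coordinate $\nu_1 = J'/\alpha_1$ is forced and $\nu_3 = -\alpha_2 \nu_2/\alpha_3$, so the multiplicative relation reduces to $\nu_1^{k_1} \nu_2^{k_2 - k_3} = \nu_4^{k_4}(-\alpha_2/\alpha_3)^{k_3}$. If $k_2 \neq k_3$ this determines $\nu_2$ up to sign for each $\nu_4$, contributing $O(H)$ solutions; if $k_2 = k_3$ it becomes a polynomial equation of degree $\leq K$ in $\nu_4$ whose value at $\nu_4 = 0$ is $(J/\alpha_1)^{k_1} \neq 0$, hence nontrivial, admitting $\leq K$ solutions for $\nu_4$ with $\nu_2$ free and thus $O(KH)$ solutions overall. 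The case $\alpha_2 \nu_2 = J'$ is entirely symmetric. All degenerate contributions are therefore $O(KH)$, which is absorbed into $C_3 K^3 H^{4/3}(\log H + K)$.

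The main obstacle is in the last paragraph: certifying that each polynomial equation arising from the degenerate analyses is not identically zero, where the hypothesis $J \neq 0$ plays the essential role (without it, the constant terms that guarantee nontriviality could vanish and one would lose the $\leq K$ bound on the number of roots). Once this non-vanishing is verified subcase by subcase, combining the generic bound from Theorem~\ref{thm:bompil-3var} with the $O(KH)$ degenerate contributions yields the asserted inequality.
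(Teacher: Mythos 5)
Your argument is correct and takes essentially the same route as the paper: both fix $\nu_4$, sum the three-variable curve bound over its at most $2H$ values, and use $J \neq 0$ to show that only $O(k_1+k_2+k_3+k_4)$ values of $\nu_4$ produce the exceptional linear configurations, each of which contributes only $O(H)$ solutions. The only cosmetic difference is that you invoke Theorem~\ref{thm:bompil-3var} as a black box and dispose of the cases it excludes ($J'=0$, $\alpha_1\nu_1=J'$, $\alpha_2\nu_2=J'$) by direct root-counting, whereas the paper reruns the factorization analysis for the corresponding plane curve $f_5$ and counts integer points on its at most two linear components; the resulting polynomial conditions on $\nu_4$ and the role of $J\neq 0$ are identical.
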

The proofs of these results are derived in Section~\ref{sec:2<=k<=4}. In addition, we discuss the condition of these results in Section~\ref{sec:cond-bompil}. 

\subsection{Proofs of Theorems~\ref{thm:bompil-2var}-\ref{thm:bompil-4var}}\label{sec:proof-bompil}

\subsubsection{Proof of Theorem~\ref{thm:bompil-2var}} 

Substituting \eqref{eqn:fatal2var-linear} to~\eqref{eqn:fatal2var-1}, we obtain
\[ A\nu_1^{k_1}(J-\alpha_1\nu_1)^{k_2}=B\alpha_2^{k_2}\nu_3^{k_3}. \]
Then, we consider a curve $f_1\in \Z[x,y]$ defined as
\[ f_1(x,y) \coloneqq B\alpha_2^{k_2}y^{k_3}-Ax^{k_1}(J-\alpha_1x)^{k_2}. \]
Suppose that $f_1$ has a linear factor. We may consider factors of the form $y-Dx-E$, $x-D$ or $y-D$ for some rational numbers $D\ne 0$ and $E$; note that $x$ and $y$ are not factors of $f_1$, since $f_1(0, 0) \neq 0$. First, suppose that $x-D \mid f_1$. Then, 
\[ B\alpha_2^{k_2}y^{k_3}=AD^{k_1}(J-\alpha_1D)^{k_2} \]
holds true for all $y$, which is impossible since $B,\alpha_2,k_3\ne 0$. With similar arguments, we also have $y-D\nmid f_1$.

The remaining case is $y-Dx-E \mid f_1$, where here we conclude
\[ B\alpha_2^{k_2}(Dx+E)^{k_3}=Ax^{k_1}(J-\alpha_1x)^{k_2} \]
for all $x$. Comparing the constant coefficients, since $B,\alpha_2, k_1\ne 0$, we obtain $E=0$. Then the left hand side is a monomial, but the right hand side is not a monomial, since $A, J, \alpha_1, k_2 \neq 0$; contradiction.

Therefore, any irreducible factors of $f_1$ is at least of degree $2$. Applying Proposition~\ref{prp:CCDN} completes the proof of Theorem~\ref{thm:bompil-2var} in this case.

When~\eqref{eqn:fatal2var-1} is replaced with~\eqref{eqn:fatal2var-2}, we  instead use the curve \[
f_2(x,y)\coloneqq A\alpha_2^{k_2}x^{k_1}y^{k_3}-B(J-\alpha_1x)^{k_2}
\] and apply similar arguments as in the previous case. This completes the proof of Theorem~\ref{thm:bompil-2var}.

\subsubsection{Proof of Theorem~\ref{thm:bompil-3var}}

Substituting \eqref{eqn:fatal2} to \eqref{eqn:fatal1} with respect to $\nu_3$, we obtain
\[ A\alpha_3^{k_3}\nu_1^{k_1}\nu_2^{k_2} = B(J-\alpha_1\nu_1-\alpha_2\nu_2)^{k_3}. \]
Then, we consider a curve $f_3\in \Z[x,y]$ defined as 
\[ f_3(x,y) \coloneqq A\alpha_3^{k_3} x^{k_1}y^{k_2} -B(J-\alpha_1x-\alpha_2y)^{k_3}. \]
Suppose $f_3$ has a linear factor. We may consider factors of the form $y-Dx-E$, $x-D$ or $y-D$ for some rational numbers $D$ and $E$. Note that $x$ and $y$ are not factors of $f_3$, since $f_3(0, 0) = -J^{k_3} \neq 0$. Therefore, we may assume $D\ne 0$.

First, suppose that $y-Dx-E \mid f_3$. We obtain \[
A\alpha_3^{k_3} x^{k_1}(Dx+E)^{k_2} =B(J-\alpha_1x-\alpha_2(Dx+E))^{k_3}
\] for all integers $x$. Substituting $x=0$, we obtain $J=\alpha_2E$, which implies \[
A\alpha_3^{k_3} x^{k_1}(Dx+E)^{k_2} =B(-\alpha_1-\alpha_2D)^{k_3}x^{k_3}.
\] The right hand side is a monomial, while the left hand side is a monomial if and only if $E = 0$, since $A, \alpha_3, D, k_2 \neq 0$. But $E = 0$ implies $J = 0$; contradiction.

We now consider the other case, where either $x-D \mid f_3$ or $y-D \mid f_3$ for some rational number $D \neq 0$. Without loss of generality, assume that $y-D \mid f_3$. We now consider the integer points that lies in $y-D$; in particular, we assume that $D$ is an integer. In this case, we obtain
\begin{equation}\label{eqn:curve-k=3} A\alpha_3^{k_3} x^{k_1}D^{k_2} = B(J-\alpha_1x-\alpha_2D)^{k_3}. \end{equation}
for all integers $x$. Comparing coefficients, we obtain $J=\alpha_2D$. In particular, the only possible linear factor of $f_3$ of the form $y - D$ is $y - J/\alpha_2$. By symmetry, the only possible linear factor of $f_3$ of the form $x - D$ is $x - J/\alpha_1$.

From the previous paragraph, the only possible linear factors of $f_3$ are $x - J/\alpha_1$ and $y - J/\alpha_2$. Thus we can write
\[ f_3(x, y) = (x - J/\alpha_1)^{t_1} (y - J/\alpha_2)^{t_2} f_4(x, y) \]
for some integers $t_1, t_2 \geq 0$ and $f_4 \in \Z[x, y]$ such that $f_4$ has no linear factors. 

We now return to our proposition, which is equivalent to counting of integer points on $f$ with $x \ne J/\alpha_1$, $y\ne J/\alpha_2$. All such points must lie in $f_4$. There, we apply Proposition~\ref{prp:CCDN} with $\deg f_4\le \deg f_3 \le k_1+k_2+k_3$, which completes the proof of Theorem~\ref{thm:bompil-3var}.

We end this discussion by  noting that when $A=B=1$, the condition~\eqref{eqn:curve-k=3} may hold only if $\alpha_2\mid J$ and $|\alpha_3|\mid |\alpha_1|$.  These facts will be used in the Section~\ref{sec:2<=k<=4} for the case $k=3$.
In particular, we conclude that the condition $\alpha_1\nu_1,\alpha_2\nu_2\ne J$ always holds when $|\alpha_2|\nmid J$ or $|\alpha_3|$ is not a divisor of $|\alpha_1|$ or $|\alpha_2|$. In addition, if $|\alpha_3|\in \{|\alpha_1|,|\alpha_2|\}$, then all possible linear factors of $f_3$ are $x\pm 1$ and $y\pm 1$.

\subsubsection{Proof of Theorem~\ref{thm:bompil-4var}}

Without loss of generality, assume that $\alpha_1, \alpha_2, \alpha_3 \neq 0$. We first fix $\nu_4=W$. If $J-\alpha_4 W=0$ (which is impossible if $\alpha_4=0$), we may see that any choice of $\nu_3$ generates at most $k_1 + k_2$ choices of $(\nu_1,\nu_2)$ satisfying $\nu_1^{k_1} \nu_2^{k_2} = \nu_3^{k_3} W^{k_4}$ and $\alpha_1 \nu_1 + \alpha_2 \nu_2 = -\alpha_3 \nu_3$. Therefore, we obtain $O((k_1 + k_2) H)$ solutions to~\eqref{eqn:fatal-4var} in this case.

We may now assume $J-\alpha_4W\ne 0$.  Then, by following the proof of Theorem~\ref{thm:bompil-3var}, we consider the curve
\[ f_5(x,y) \coloneqq \alpha_3^{k_3} x^{k_1}y^{k_2} -W^{k_4}(J-\alpha_4W-\alpha_1x-\alpha_2y)^{k_3}. \] and determine whether this curve has a linear factor with integer points. With similar arguments in the proof of Theorem~\ref{thm:bompil-3var}, since $W(J-\alpha_4 W)\ne 0$, then $f_5$ does not have any factors of the form $y-Dx-E$ with $D \neq 0$.

Next, we consider the case where $x-D \mid f_5$ or $y-D \mid f_5$ for a rational number $D$. Without loss of generality we may assume $y-D \mid f_5$. Then we get $J-\alpha_4W=\alpha_2D$, $k_1=k_3$, and
\[ \alpha_3^{k_3} D^{k_2}=W^{k_4}(-\alpha_1)^{k_3}. \]
Substituting $D=(J-\alpha_4 W)/\alpha_2$, we obtain
\[ \alpha_3^{k_3}(J-\alpha_4W)^{k_2} = \alpha_2^{k_2}W^{k_4}(-\alpha_1)^{k_3}. \]
Since $J, \alpha_1, \alpha_2, \alpha_3 \neq 0$ and $k_2, k_4 > 0$, by looking at the above equality as a polynomial equality with respect to $W$, we get that there are at most $k_2 + k_4$ choices of $W$ such that this scenario is possible. Each choice of $W$ gives a unique value of $\nu_2 = D = (J-\alpha_4W)/\alpha_2$. To summarise this paragraph, there exists at most
\[ (k_1 + k_4) + (k_2 + k_4) \leq 2(k_1 + k_2 + k_3 + k_4) \]
choices of $W$ such that $f_5$ contains a line, and in these cases, $f_5$ contains at most one line of the form $x - D$, one line of the form $y - D$, and no other lines.

Now we factor $f_5$ as follows:
\[ f_5(x, y) = (x - D_x)^{t_1} (y - D_y)^{t_2} f_6(x, y), \]
where $D_x, D_y$ are rational numbers, $t_1, t_2$ are nonnegative integers, and $f_6$ is a polynomial with no linear factors. Since $\deg(f_6) \leq \deg(f_5) \leq k_1 + k_2 + k_3 + k_4$, by applying Proposition~\ref{prp:CCDN} to $f_6$, we get that $f_5$ has at most
\begin{align*}
    & 2(2H + 1) + C_0(k_1+k_2+k_3+k_4) H^{1/2} (\log H+2) \\
    &\quad+ C_1(k_1+k_2+k_3+k_4)^3 H^{1/3} (\log H+k_1+k_2+k_3+k_4)
\end{align*}
integer points of height at most $H$. Furthermore, the $2(2H + 1)$ term disappears if $f_5$ has no linear factors, which is the case for all but $2(k_1 + k_2 + k_3 + k_4)$ choices of $W$. Summing across all possible choices of $W$ concludes the proof of Theorem~\ref{thm:bompil-4var}.

\section{From counting points to computing volumes}\label{sec:prelim-lat}
\subsection{Counting points on a shifted lattice}
An important point in our argument is counting integer solutions to
\[ \balpha \cdot \bnu = J, \]
for a fixed vector $\balpha$ and constant $J$, where the coordinates of $\bnu$ are bounded by parameters $\bH$, which is done in the following theorem.

\begin{theorem}\label{thm:shifted-lattice-box-count}
Let $\balpha \in \Z^n$ be a nonzero vector.
Let $\cB$ be a box in $\R^n$ whose side lengths are at most $H$.
Then for any integer $J$,
\[ \#\{\bnu \in \cB \cap \Z^n : \balpha \cdot \bnu = J\} =
\begin{cases}
    0 & \quad \text{if} \quad \gcd(\balpha) \nmid J, \\
     V_{\balpha}(\cB; J) + O(H^{n - 2}) & \quad \text{if} \quad \gcd(\balpha) \mid J,
\end{cases} \]
    where
\begin{equation}\label{eq:Valpha-BJ}
    V_{\balpha}(\cB; J) = \dfrac{\gcd(\balpha) \Vol_{n - 1}(\{\bnu \in \cB : \balpha \cdot \bnu = J\})}{\|\balpha\|}.
\end{equation}
\end{theorem}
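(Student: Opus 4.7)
The plan is to split into the two cases of the theorem. The first case is immediate: if $\gcd(\balpha) \nmid J$, the linear equation $\balpha \cdot \bnu = J$ has no integer solution since $\balpha \cdot \bnu \in \gcd(\balpha) \Z$ for all $\bnu \in \Z^n$.

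For the divisibility case, set $d = \gcd(\balpha)$, write $\balpha = d \balpha'$ with $\gcd(\balpha') = 1$, and $J = d J'$. By B\'ezout's identity applied to $\balpha'$, there exists a particular solution $\bnu_0 \in \Z^n$ to $\balpha' \cdot \bnu_0 = J'$. The full integer solution set is then the shifted lattice $\bnu_0 + L$ contained in the affine hyperplane $H_J = \{\bv \in \R^n : \balpha \cdot \bv = J\}$, where $L = \Z^n \cap H_0$ and $H_0 = \{\bv : \balpha \cdot \bv = 0\}$ is the parallel linear hyperplane. Thus I would reduce the count to
\[ \#\{\bnu \in \cB \cap \Z^n : \balpha \cdot \bnu = J\} = \#\bigl( (\bnu_0 + L) \cap (\cB \cap H_J) \bigr). \]

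The next step is to identify the covolume of $L$ inside $H_0$. I would argue via a density decomposition: the cosets of $L$ in $\Z^n$ are parametrised by the surjection $\balpha' \cdot (-) : \Z^n \to \Z$, so consecutive cosets lie in parallel affine hyperplanes separated by orthogonal distance $1/\|\balpha'\|$. Matching total densities (the density of $\Z^n$ in $\R^n$ equals $1$) forces
\[ \mathrm{covol}_{H_0}(L) = \|\balpha'\| = \|\balpha\|/d. \]
Substituting this covolume into the heuristic $\#\text{(lattice points)} \approx \text{volume}/\text{covolume}$ yields exactly the main term $V_{\balpha}(\cB; J)$ defined in \eqref{eq:Valpha-BJ}.

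To upgrade the heuristic to a rigorous bound with error $O(H^{n-2})$, I would invoke Davenport's lemma (the Lipschitz principle from \cite{Dav}) applied to the shifted $(n-1)$-dimensional lattice $\bnu_0 + L$ and the region $\cB \cap H_J \subset H_J$. The crucial geometric input is that $\cB \cap H_J$ is the intersection of an axis-aligned box (diameter $O(H)$) with an affine hyperplane, hence a convex polytope with at most $2n$ facets; in particular, every affine line meets it in at most one segment, which is the regularity hypothesis Davenport requires. Since the region sits in an $(n-1)$-dimensional space and has diameter $O(H)$, Davenport's bound gives the error $O(H^{n-2})$, with implicit constant depending on $\balpha$ through $\|\balpha\|$ and the successive minima of $L$.

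The main obstacle I expect is the bookkeeping to apply Davenport's lemma cleanly to a \emph{shifted} sublattice living inside an affine hyperplane of $\R^n$ rather than in $\R^{n-1}$ itself. I would handle this by choosing an orthonormal frame for $H_0$, pushing $L$ forward to a full-rank lattice in $\R^{n-1}$ of covolume $\|\balpha\|/d$, and transporting $\cB \cap H_J - \bnu_0$ to a congruent polytope there; then the standard statement of Davenport applies directly and the factor $\|\balpha\|/d$ in the denominator produces the claimed $V_{\balpha}(\cB;J)$.
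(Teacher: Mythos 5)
Your proposal is correct and follows essentially the same route as the paper: identify the solution set as a translate of the lattice $\Lambda = \Z^n \cap \{\balpha \cdot \bnu = 0\}$, transport the problem to a full-rank lattice in $\R^{n-1}$, apply Davenport's lemma to get the main term $\Vol_{n-1}/\mathrm{covol}$ with error $O(H^{n-2})$, and use the covolume identity $\mathrm{covol}(\Lambda) = \|\balpha\|/\gcd(\balpha)$. The only cosmetic difference is in that last identity, which you justify by a density-matching argument over the parallel hyperplanes $\balpha' \cdot \bnu = m$, whereas the paper derives it algebraically from the right-prism decomposition of a fundamental domain of $\Lambda \oplus \balpha\Z$ together with the index computation $\Z^n/(\Lambda \oplus \balpha\Z) \cong \Z/\|\balpha\|^2\Z$; both are standard and correct.
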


The rest of this section is dedicated for the proof of Theorem~\ref{thm:shifted-lattice-box-count}. First, note that the set $\{\bnu \in \Z^n : \balpha \cdot \bnu = J\}$ is nonempty if and only if $\gcd(\balpha)\mid J$, to which we may now assume. We then need to derive a result on approximating the number of lattice points in a convex body, for which we use the following result of Davenport.

\begin{lemma}\label{lem:Davenport-std-lattice}
For any convex body $\cB \subseteq \R^n$, we have
\[ \left|\#(\cB \cap \Z^n) - \Vol_n(\cB)\right| \leq \sum_{\varnothing \neq S \subseteq \{1, 2, \ldots, n\}} \Vol_{|S|}(\overline{\cB}_S), \]
    where for any $k \geq 0$, $\Vol_k$ denotes the $k$-dimensional volume, and for any $S \subseteq \{1, 2, \ldots, n\}$, $\overline{\cB}_S$ denotes the projection of $\cB$ into the subspace $\{\bnu \in \R^n : \forall i \in S, \nu_i = 0\}$.
\end{lemma}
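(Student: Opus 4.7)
The natural strategy is induction on the dimension $n$, following Davenport's original treatment. For the base case $n=1$, any convex body in $\R$ is an interval $\cB = [a,b]$, and a direct count gives $|\#([a,b]\cap\Z) - (b-a)| \leq 1$; this matches the sole nonzero term on the right-hand side (from $S = \{1\}$) under the convention that the projection onto the empty set of remaining coordinates is a single point of unit volume.

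For the inductive step, I would foliate $\cB$ by the integer hyperplanes $\{\nu_n = k\}_{k \in \Z}$, producing convex cross-sections $\cB_k := \{(\nu_1, \ldots, \nu_{n-1}) \in \R^{n-1} : (\nu_1, \ldots, \nu_{n-1}, k) \in \cB\}$. This yields the exact decomposition $\#(\cB \cap \Z^n) = \sum_{k \in \Z} \#(\cB_k \cap \Z^{n-1})$, so applying the inductive hypothesis to each $\cB_k$ and summing reduces the problem to bounding the gap between $\sum_k \Vol_{n-1}(\cB_k)$ and $\Vol_n(\cB)$, together with the aggregate of inductive error terms $\sum_k \sum_{\varnothing \neq T \subseteq \{1, \ldots, n-1\}} \Vol_{|T|}((\overline{\cB_k})_T)$.

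The essential analytic input is the Brunn--Minkowski inequality: the function $t \mapsto \Vol_{n-1}(\cB_t)^{1/(n-1)}$ is concave on its support, hence $t \mapsto \Vol_{n-1}(\cB_t)$ is unimodal. For a unimodal nonnegative function with compact support, the Riemann sum at integer points differs from the Fubini integral $\Vol_n(\cB) = \int \Vol_{n-1}(\cB_t)\,dt$ by at most the maximum value of the integrand, which is itself bounded by the volume of the projection of $\cB$ onto the hyperplane $\nu_n = 0$---this contributes the term $S = \{n\}$. An entirely analogous slice-to-projection argument, applied one dimension lower, converts each inner sum $\sum_k \Vol_{|T|}((\overline{\cB_k})_T)$ into a projection volume of $\cB$ itself, contributing the terms $S = T \cup \{n\}$. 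The terms $S \subseteq \{1,\ldots,n-1\}$ come from the inductive hypothesis directly, accounting together with the above for every nonempty $S \subseteq \{1, \ldots, n\}$.

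The main obstacle is the careful bookkeeping across the induction: matching the projection-volume terms that arise from summing the inductive bound over integer slices with those on the right-hand side of the target inequality, and invoking Brunn--Minkowski-type unimodality to control Riemann-sum errors by single pointwise maxima. The convexity hypothesis is indispensable throughout; without it, the sum-to-integral comparison would require a total-variation bound on $t \mapsto \Vol_{n-1}(\cB_t)$ that need not reduce to a single projection volume of $\cB$.
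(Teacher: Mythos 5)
The paper offers no argument here at all: its ``proof'' is a one-line citation to Davenport's article, so your sketch genuinely reconstructs the missing proof, and the skeleton you use (induction on $n$, slicing at integer heights $\nu_n=k$, comparing $\sum_k\Vol_{n-1}(\cB_k)$ with $\Vol_n(\cB)=\int\Vol_{n-1}(\cB_t)\,dt$) is exactly Davenport's. Where you differ is the comparison step: you invoke Brunn--Minkowski to get unimodality of $t\mapsto\Vol_{n-1}(\cB_t)$ and then the bound $\lvert\sum_k f(k)-\int f\rvert\le\max f$ for unimodal $f\ge 0$. That bound is correct, but it needs a short verification (split at the mode; the two integers straddling it contribute at most $\max f$ after subtracting the integral over the unit interval between them), whereas Davenport's own route avoids Brunn--Minkowski entirely: by Fubini along lines parallel to the $\nu_n$-axis, each such line meets the convex body in a single interval, the number of integers in an interval differs from its length by at most $1$, and integrating over the projection onto $\{\nu_n=0\}$ gives the $S=\{n\}$ error term directly; the same one-line argument then handles the lower-dimensional sums. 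On your bookkeeping, one reallocation is needed: using the identity $(\overline{\cB_k})_T=(\overline{\cB}_T)_k$ (projection of a slice equals slice of the projection), the aggregated inductive error $\sum_k\Vol\bigl((\overline{\cB_k})_T\bigr)$ is a slice sum for the convex body $\overline{\cB}_T$, and the sum-versus-integral comparison applied to it yields $\Vol_{n-|T|}(\overline{\cB}_T)+\Vol_{n-1-|T|}(\overline{\cB}_{T\cup\{n\}})$, i.e.\ it produces \emph{both} the $S=T$ and the $S=T\cup\{n\}$ terms; the $S=T$ terms do not come ``directly'' from the inductive hypothesis as you state (per-slice projection volumes summed over unboundedly many $k$ are not a single projection volume of $\cB$), though with this corrected accounting every nonempty $S\subseteq\{1,\ldots,n\}$ is covered exactly once and your proof goes through. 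Finally, note that the subscript in the statement should be read as $\Vol_{n-|S|}(\overline{\cB}_S)$, the dimension of the projection, which is the convention your argument implicitly uses.
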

\begin{proof}
Follows from~\cite{Dav}.
\end{proof}

Consider the lattice
\[ \Lambda = \{\bnu \in \Z^n : \balpha \cdot \bnu = 0\}. \]
Note that $\dim \Lambda=n-1$, thus there exists a bijective linear operator $T : \R^n \to \R^n$ that induces a bijection from $\Z^{n - 1}$ to $\Lambda$. Now, choose some $\bnu_0\in \Z^n$ with $\balpha \cdot \bnu_0 = J$. Then we have
\[ \{\bnu \in \cB \cap \Z^n : \balpha \cdot \bnu = J\} = \cB \cap (\bnu_0 + \Lambda). \]
Next, note the map $\phi : \R^n \to \R^n$ defined by $\phi(\bnu) \coloneqq T^{-1}(\bnu - \bnu_0)$ is a bijection between the sets $\cB \cap (\bnu_0 + \Lambda)$ and $T^{-1}(-\bnu_0 + \cB) \cap \Z^{n - 1}$, and $T^{-1}(-\bnu_0 + \cB)$ is a convex body. Furthermore, $T^{-1}(-\bnu_0 + \cB)$ is contained in a box whose side lengths are $O(H)$, since $\cB$ is a box whose side lengths are at most $H$.

Thus, by Lemma~\ref{lem:Davenport-std-lattice}, we obtain
\begin{align*}
    \#\{\bnu \in \cB \cap \Z^n : \balpha \cdot \bnu = J\}
    &= \#(T^{-1}(-\bnu_0 + \cB) \cap \Z^{n - 1}) \\
    &= \Vol_{n - 1}(T^{-1}(-\bnu_0 + \cB) \cap \R^{n - 1}) + O(H^{n - 2}) \\
    &= \lvert\det(T)\rvert^{-1} \Vol_{n - 1}(\cB \cap (\bnu_0 + T(\R^{n - 1}))) + O(H^{n - 2}).
\end{align*}
Since $T(\Z^{n - 1}) = \Lambda$ and $\balpha \cdot \bnu_0 = J$, we have
\[ \cB \cap (\bnu_0 + T(\R^{n - 1})) = \{\bnu \in \cB : \balpha \cdot \bnu = J\}. \]
Since $T$ is a bijection between $\Z^{n - 1}$ and $\Lambda$, we have
\[ \lvert\det(T)\rvert = d(\Lambda), \]
where $d(\Lambda)$ is the covolume of $\Lambda$. Thus, it now remains to prove
\begin{equation}\label{eqn:dLambda} d(\Lambda) = \gcd(\balpha)^{-1} \|\balpha\|. \end{equation}
By scaling, we may assume $\gcd(\balpha) = 1$.
Note  that since $\balpha$ is orthogonal to $\Lambda$, a fundamental parallelpiped for the lattice $\Lambda \oplus \balpha \Z$ is given by a right prism with height $\|\balpha\|$ and base being a fundamental parallelpiped for $\Lambda$.
Thus we have
\[ d(\Lambda \oplus \balpha \Z) = \|\balpha\| \, d(\Lambda). \]

On the other hand, since $\gcd(\balpha) = 1$, the map from $\Z^n$ to $\Z$ defined by $\bnu \mapsto \balpha \cdot \bnu$ is surjective, and the kernel is $\Lambda$ by definition.
Thus we have an isomorphism $\phi : \Z^n/\Lambda \cong \Z$ such that $\phi(\bnu \bmod{\Lambda}) = \balpha \cdot \bnu$ for all $\bnu \in \Z^n$.
Hence,
\[ \Z^n/(\Lambda \oplus \balpha \Z) \cong \Z/\phi(\balpha \bmod{\Lambda}) \, \Z = \Z/\|\balpha\|^2 \Z. \]
This implies $d(\Lambda \oplus \balpha \Z) = \|\balpha\|^2$.
Since we also have $d(\Lambda \oplus \balpha \Z) = \|\balpha\| \, d(\Lambda)$, we obtain $d(\Lambda) = \|\balpha\|$, which completes the proof of~\eqref{eqn:dLambda} and Theorem~\ref{thm:shifted-lattice-box-count}.

\subsection{Computing volumes on some regions in a box}\label{sec:count-vol}

    Theorem~\ref{thm:shifted-lattice-box-count} reduces our problem to computing the volumes of sections of hypercubes. We discuss several related results on this problem, beginning with a result from Marichal and Mossinghoff~\cite{MarMos} concerning the volume of sections of the standard hypercube, $[0,1]^n$.
    
\begin{theorem}\label{thm:Marichal-Mossinghoff}(\cite[Theorem 4]{MarMos})
Let $\balpha \in \R^n$ be a vector with nonzero coordinates.
Then for any real number $r$,
\[ \Vol_{n - 1}\left(\{\bnu \in [0, 1]^n : \balpha \cdot \bnu = r\}\right) = \frac{\|\balpha\|}{(n - 1)! \prod_i \alpha_i} \sum_{\mathbf{c} \in \{0, 1\}^n} (-1)^{\# \{i : c_i = 1\}} \max\{r - \balpha \cdot \mathbf{c}, 0\}^{n - 1}. \]
\end{theorem}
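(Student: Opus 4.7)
The plan is to reduce the claimed slice-volume identity to a primitive cumulative-volume identity that can be handled by inclusion--exclusion, and then to recover the statement by a single differentiation in $r$.

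Introduce the cumulative volume
\[ G(r) \coloneqq \Vol_n\bigl(\{\bnu \in [0,1]^n : \balpha \cdot \bnu \le r\}\bigr). \]
Slicing $[0,1]^n$ by hyperplanes orthogonal to $\balpha$ (equivalently, applying the coarea formula to the map $\bnu \mapsto \balpha\cdot \bnu$, whose gradient has constant Euclidean norm $\|\balpha\|$) gives
\[ G'(r) = \frac{\Vol_{n-1}\bigl(\{\bnu \in [0,1]^n : \balpha \cdot \bnu = r\}\bigr)}{\|\balpha\|}. \]
It therefore suffices to prove the companion identity
\[ G(r) = \frac{1}{n!\, \prod_i \alpha_i}\sum_{\mathbf{c} \in \{0,1\}^n} (-1)^{\#\{i : c_i = 1\}} \max\{r - \balpha\cdot \mathbf{c},\, 0\}^n \]
and then differentiate term by term, multiplying through by $\|\balpha\|$.

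The core case is $\alpha_i > 0$ for every $i$. Writing the cube as $\R_{\ge 0}^n \cap \bigcap_i\{\nu_i \le 1\}$ and expanding $\prod_i(1 - \mathbf{1}_{\nu_i > 1})$ yields the inclusion--exclusion identity
\[ G(r) = \sum_{I \subseteq \{1,\dots,n\}}(-1)^{|I|}\Vol_n\bigl(\{\bnu \in \R_{\ge 0}^n : \nu_i \ge 1 \text{ for } i \in I,\ \balpha \cdot \bnu \le r\}\bigr), \]
and translating each term by $\mathbf{c}_I$ (the indicator vector of $I$) converts it into the positive-orthant simplex $\{\bm{\mu} \ge 0 : \balpha\cdot\bm{\mu} \le r - \balpha\cdot\mathbf{c}_I\}$, whose volume is the classical $\max\{r - \balpha\cdot\mathbf{c}_I,0\}^n/(n!\prod_i \alpha_i)$. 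Summing over $I$ recovers the target identity. For an arbitrary sign pattern of $\balpha$ I would reduce to this case by the involution $\nu_i \mapsto 1 - \nu_i$ applied on each coordinate $i$ with $\alpha_i < 0$: this preserves the cube, replaces $\alpha_i$ by $|\alpha_i|$, and shifts $r$ by $\sum_{i:\alpha_i<0}\alpha_i$. Tracking how both sides of the identity transform then gives the general case.

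The step I expect to be most delicate is the sign bookkeeping in this reflection reduction: the factor $1/\prod_i \alpha_i$ picks up one sign per flip, the summation index $\mathbf{c}$ is reindexed by $c_i \mapsto 1 - c_i$ on the reflected coordinates, and the truncated powers shift in $r$, and all three changes must combine consistently to reproduce the claimed formula. A minor technical point is that $r \mapsto \max\{r - a, 0\}^n$ is only $C^{n-1}$, so termwise differentiation of the CDF identity is valid away from the finite exceptional set $\{\balpha \cdot \mathbf{c} : \mathbf{c} \in \{0,1\}^n\}$; since both sides of the slice identity are continuous in $r$, the formula then extends to all $r$ by continuity.
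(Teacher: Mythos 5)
Your proposal is correct. Note, however, that the paper does not prove this statement at all: it is quoted verbatim from Marichal--Mossinghoff \cite[Theorem 4]{MarMos}, so there is no in-paper argument to compare with; what you have written is a self-contained elementary derivation of the cited result. Your route (inclusion--exclusion over the $2^n$ translated orthant simplices to get the cumulative volume $G(r)=\frac{1}{n!\,\prod_i\alpha_i}\sum_{\mathbf{c}}(-1)^{\#\{i:c_i=1\}}\max\{r-\balpha\cdot\mathbf{c},0\}^n$, then differentiation and the coarea factor $\|\balpha\|$) is sound, and the sign bookkeeping you flag does close up: reflecting the coordinates in $N=\{i:\alpha_i<0\}$ replaces $\prod_i\alpha_i$ by $(-1)^{\#N}\prod_i\alpha_i$, while the reindexing $c_i\mapsto 1-c_i$ on $N$ changes the parity by $(-1)^{\#N}$ and turns the shifted arguments into $r-\balpha\cdot\mathbf{c}'$, so the two signs cancel and the general formula follows from the positive case. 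This differs in flavour from Marichal and Mossinghoff's own treatment, which obtains the slice formula from general identities for truncated powers and finite differences (Lawrence/P\'olya-type formulas); your argument is more pedestrian but entirely adequate. Two small points to tighten: the continuity of $r\mapsto\Vol_{n-1}(\{\bnu\in[0,1]^n:\balpha\cdot\bnu=r\})$, which you invoke to pass through the finite exceptional set, deserves a one-line justification --- e.g.\ since $\alpha_n\neq 0$, project out $\nu_n$ to write this volume as $\frac{\|\balpha\|}{|\alpha_n|}\Vol_{n-1}\bigl(\{\bnu'\in[0,1]^{n-1}:(r-\sum_{i<n}\alpha_i\nu_i)/\alpha_n\in[0,1]\}\bigr)$ and apply dominated convergence; and the argument (like the formula itself, with its $\max\{\cdot,0\}^{n-1}$ at a kink) degenerates when $n=1$, which is harmless since the paper only applies the theorem in dimensions at least two.
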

Using this result, we derive a new result over the hypercube $[-1/2, 1/2]^n$.
\begin{theorem}\label{thm:Marichal-Mossinghoff-half}
Let $\balpha \in \R^n$ be a vector with nonzero coordinates.
Then for any real number $r$,
\begin{multline*}
    \Vol_{n - 1}\left(\{\bnu \in [-1/2, 1/2]^n : \balpha \cdot \bnu = r\}\right) \\
    = \frac{\|\balpha\|}{2^{n - 1} (n - 1)! \prod_i \alpha_i} \sum_{\mathbf{c} \in \{-1, 1\}^n} (-1)^{\# \{i : c_i = 1\}} \max\{2r - \balpha \cdot \mathbf{c}, 0\}^{n - 1}.
\end{multline*} 
\end{theorem}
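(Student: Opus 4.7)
The plan is to reduce this statement to the already-proved Theorem~\ref{thm:Marichal-Mossinghoff} via an affine change of variables, followed by a reparameterization of the vertex sum. The key observation is that the map $\bmu = \bnu + \tfrac{1}{2}\mathbf{1}$, where $\mathbf{1} = (1,1,\dots,1)$, sends $[-1/2,1/2]^n$ bijectively onto $[0,1]^n$, and under this map the hyperplane $\balpha \cdot \bnu = r$ becomes $\balpha \cdot \bmu = r + \tfrac{1}{2}\balpha \cdot \mathbf{1}$. Since the map is a translation, it preserves $(n-1)$-dimensional Lebesgue measure.

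Concretely, I would first set $r' \coloneqq r + \tfrac{1}{2}\balpha \cdot \mathbf{1}$ and note that
\[ \Vol_{n-1}\bigl(\{\bnu \in [-1/2,1/2]^n : \balpha \cdot \bnu = r\}\bigr) = \Vol_{n-1}\bigl(\{\bmu \in [0,1]^n : \balpha \cdot \bmu = r'\}\bigr). \]
Then I would apply Theorem~\ref{thm:Marichal-Mossinghoff} with $r$ replaced by $r'$ to express the right-hand side as a sum over $\mathbf{c} \in \{0,1\}^n$.

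Next, I would perform the change of indexing $\mathbf{c}' = 2\mathbf{c} - \mathbf{1} \in \{-1,1\}^n$, which is a bijection between $\{0,1\}^n$ and $\{-1,1\}^n$ preserving the set $\{i : c_i = 1\} = \{i : c'_i = 1\}$. A direct calculation gives
\[ r' - \balpha \cdot \mathbf{c} = r + \tfrac{1}{2}\balpha \cdot \mathbf{1} - \balpha \cdot \tfrac{\mathbf{c}'+\mathbf{1}}{2} = \tfrac{1}{2}\bigl(2r - \balpha \cdot \mathbf{c}'\bigr), \]
so $\max\{r' - \balpha \cdot \mathbf{c}, 0\}^{n-1} = 2^{-(n-1)}\max\{2r - \balpha \cdot \mathbf{c}', 0\}^{n-1}$. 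Substituting this into the expression from Theorem~\ref{thm:Marichal-Mossinghoff} and pulling out the factor $2^{-(n-1)}$ yields exactly the claimed formula.

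There is no real obstacle here: the entire argument is a bookkeeping exercise once the translation $\bnu \mapsto \bnu + \tfrac{1}{2}\mathbf{1}$ is made, and the only place where mild care is needed is in verifying that the sign $(-1)^{\#\{i : c_i = 1\}}$ is preserved under the bijection $\mathbf{c} \leftrightarrow \mathbf{c}'$, which is immediate since the index set where the coordinate equals $1$ is the same in both parametrizations.
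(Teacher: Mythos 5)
Your proof is correct and follows essentially the same route as the paper: translate $[-1/2,1/2]^n$ to $[0,1]^n$, apply Theorem~\ref{thm:Marichal-Mossinghoff} at the shifted level $r+\tfrac{1}{2}\balpha\cdot\bfone_n$, and reparametrize the vertex sum from $\{0,1\}^n$ to $\{-1,1\}^n$, extracting the factor $2^{-(n-1)}$. The sign check and the identity $r'-\balpha\cdot\mathbf{c}=\tfrac12(2r-\balpha\cdot\mathbf{c}')$ are exactly the bookkeeping the paper carries out.
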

\begin{proof}
Indeed, Theorem~\ref{thm:Marichal-Mossinghoff} implies
\begin{align*}
    & \Vol_{n - 1}\left(\{\bnu \in [-1/2, 1/2]^n : \balpha \cdot \bnu = r\}\right) \\
    =\;& \Vol_{n - 1}\left(\{\bnu \in [0, 1]^n : \balpha \cdot \bnu = r + \balpha \cdot \bfone_n/2\}\right) \\
    =\;& \frac{\|\balpha\|}{(n - 1)! \prod_i \alpha_i} \sum_{\mathbf{c} \in \{0, 1\}^n} (-1)^{\# \{i : c_i = 1\}} \max\{r - \balpha \cdot (\mathbf{c} - \bfone_n/2), 0\}^{n - 1} \\
    =\;& \frac{\|\balpha\|}{(n - 1)! \prod_i \alpha_i} \sum_{\mathbf{c} \in \{-1/2, 1/2\}^n} (-1)^{\# \{i : c_i = 1/2\}} \max\{r - \balpha \cdot \mathbf{c}, 0\}^{n - 1} \\
    =\;& \frac{\|\balpha\|}{2^{n - 1} (n - 1)! \prod_i \alpha_i} \sum_{\mathbf{c} \in \{-1, 1\}^n} (-1)^{\# \{i : c_i = 1\}} \max\{2r - \balpha \cdot \mathbf{c}, 0\}^{n - 1},
\end{align*}
    where $\bfone_n$ is the all-one vector of dimension $n$.
\end{proof}
From these results, we obtain the following corollary for approximating the volume of a section of the hypercubes $[0, 1]^n$ and $[-1/2, 1/2]^n$ with the volume of another section that passes through the origin.

\begin{corollary}\label{cor:Marichal-Mossinghoff-approx}
Let $\balpha \in \R^n$ be a nonzero vector.
Then for any real number $r$, we have
\begin{align*}
    \Vol_{n - 1}\left(\{\bnu \in [0, 1]^n : \balpha \cdot \bnu = r\}\right) &= \Vol_{n - 1}\left(\{\bnu \in [0, 1]^n : \balpha \cdot \bnu = 0\}\right) + O(|r|), \\
    \Vol_{n - 1}\left(\{\bnu \in [-1/2, 1/2]^n : \balpha \cdot \bnu = r\}\right) &= \Vol_{n - 1}\left(\{\bnu \in [-1/2, 1/2]^n : \balpha \cdot \bnu = 0\}\right) + O(|r|).
\end{align*}
Furthermore, if $\balpha$ has at least three nonzero coordinates, then
\[ \Vol_{n - 1}\left(\{\bnu \in [-1/2, 1/2]^n : \balpha \cdot \bnu = r\}\right) = \Vol_{n - 1}\left(\{\bnu \in [-1/2, 1/2]^n : \balpha \cdot \bnu = 0\}\right) + O(|r|^2). \]
\end{corollary}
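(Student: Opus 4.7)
The plan is to deduce all three estimates from the explicit volume formulas of Theorems~\ref{thm:Marichal-Mossinghoff} and~\ref{thm:Marichal-Mossinghoff-half}, supplemented by a symmetry argument in the third (quadratic) estimate.

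First, a reduction: both of those theorems assume the coordinates of $\balpha$ are nonzero, so I would integrate out the coordinates where $\alpha_i = 0$. Concretely, if $\balpha$ has $k$ nonzero entries assembled into $\balpha' \in \R^k$, then the slice of $C^n$ at $\balpha \cdot \bnu = r$ (with $C = [0,1]$ or $C = [-1/2, 1/2]$) is the product of the slice of $C^k$ at $\balpha' \cdot \bnu' = r$ with $C^{n-k}$ in the zero directions, and its $(n-1)$-volume equals the $(k-1)$-volume of the reduced slice since $\Vol(C^{n-k}) = 1$. Crucially, this reduction preserves the number of nonzero coordinates, so the hypothesis ``at least three nonzero coordinates'' in the third assertion is untouched; from now on I assume all coordinates of $\balpha$ are nonzero and relabel $k$ as $n$.

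For the two $O(|r|)$ bounds, the explicit formulas express each volume $h(r)$ as a finite linear combination of terms $g(r - a_{\mathbf{c}})$ or $g(2r - a_{\mathbf{c}})$, where $g(x) := \max\{x, 0\}^{n-1}$ and $a_{\mathbf{c}} = \balpha \cdot \mathbf{c}$ for $\mathbf{c}$ ranging over a finite set of sign/zero-one vectors. Since $g$ is Lipschitz on any bounded interval, with constant depending only on $n$ and the bound, each increment $g(r - a_{\mathbf{c}}) - g(-a_{\mathbf{c}})$ is $O(|r|)$ for $|r|$ bounded, with implicit constant depending on $\balpha$ and $n$; for $|r|$ large the slice is empty so the bound is trivial. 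Summing over the finitely many $\mathbf{c}$ gives the required estimate.

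The improved $O(|r|^2)$ bound is where the argument becomes substantive, and this is the step I expect to be the main obstacle. I would combine two ingredients: (i) the isometry $\bnu \mapsto -\bnu$ of $[-1/2, 1/2]^n$ preserves $(n-1)$-volume and exchanges the slices at $r$ and $-r$, so $h$ is even, $h(r) = h(-r)$; (ii) since $g \in C^{n-2}$ and now $n \geq 3$ after the reduction, the finite sum defining $h$ is of class $C^1$, and evenness then forces $h'(0) = 0$. Because $h$ is piecewise polynomial of degree at most $n-1$ with finitely many breakpoints (namely the values $(\balpha \cdot \mathbf{c})/2$), the vanishing of its constant deviation and of $h'(0)$ kills the constant and linear terms of any local polynomial expansion at $0$, yielding $h(r) - h(0) = O(r^2)$ near the origin; boundedness of $h$ then extends this globally. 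The delicate point I would verify carefully is the possibility that $r = 0$ itself is a breakpoint of the piecewise polynomial: in that case one applies the $C^1$-matching identity $h'(0) = 0$ to each of the two one-sided polynomial pieces separately, forcing both to take the form $h(0) + O(r^2)$ and closing the argument.
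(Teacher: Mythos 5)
Your proposal is correct and follows essentially the same route as the paper: reduce to the case where all coordinates of $\balpha$ are nonzero by integrating out the zero directions, then read off the $O(|r|)$ bounds from the explicit formulas of Theorems~\ref{thm:Marichal-Mossinghoff} and~\ref{thm:Marichal-Mossinghoff-half} (Lipschitz/piecewise-polynomial structure plus boundedness for large $|r|$), and obtain the $O(|r|^2)$ bound from the evenness of the symmetric-box volume together with differentiability when there are at least three nonzero coordinates. Your treatment of $h'(0)=0$ and of a possible breakpoint at $r=0$ simply fills in details the paper leaves implicit.
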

\begin{proof}
First suppose that $\balpha$ has no zero coordinates.
The expressions on the right hand side of the formulae in Theorem~\ref{thm:Marichal-Mossinghoff} and Theorem~\ref{thm:Marichal-Mossinghoff-half} are continuous, piecewise linear if $n = 2$, and differentiable if $n \geq 3$, while the left hand side is bounded.
Furthermore, as a function in terms of $r$, the expression
\[ \Vol_{n - 1}\left(\{\bnu \in [-1/2, 1/2]^n : \balpha \cdot \bnu = r\}\right) \]
is symmetric with respect to $0$.
This completes the proof when $\balpha$ has no zero coordinates.

In the case where $\balpha$ has some zero coordinates, we may instead consider the induced vector $\balpha'$ that consists of all nonzero coordinates of $\balpha$ and apply our arguments here.
\end{proof}
With regards to our arguments, we apply Corollary~\ref{cor:Marichal-Mossinghoff-approx} to the variable $V_{\balpha}(\cB; J)$ defined in~\eqref{eq:Valpha-BJ} to obtain the following result.

\begin{corollary}\label{cor:Valpha-BJ-approx}
Let $\balpha \in \R^n$ be a nonzero vector with $k > 0$ nonzero coordinates.
Then for any real numbers $H \geq 1$ and $J$,
\[ V_{\balpha}([0, H]^n; J) = V_{\balpha}([0, 1]^n; 0) H^{n - 1} + O(|J| H^{n - 2}) \]
    and
\[ V_{\balpha}([-H, H]^n; J) = \begin{cases}
    V_{\balpha}([-1/2, 1/2]^n; 0) (2H)^{n - 1} + O(|J| H^{n - 2}) & \quad \text{if} \quad k \leq 2, \\
    V_{\balpha}([-1/2, 1/2]^n; 0) (2H)^{n - 1} + O(|J|^2 H^{n - 3}) & \quad \text{if} \quad k \geq 3.
\end{cases} \]
\end{corollary}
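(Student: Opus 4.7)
The plan is to reduce each assertion directly to Corollary~\ref{cor:Marichal-Mossinghoff-approx} by a linear change of variables that rescales the cube in question to either the unit cube $[0,1]^n$ or the half-unit cube $[-1/2,1/2]^n$. For the first formula, the substitution $\bnu = H\bmu$ gives a bijection between $\{\bnu \in [0,H]^n : \balpha \cdot \bnu = J\}$ and $\{\bmu \in [0,1]^n : \balpha \cdot \bmu = J/H\}$ that scales $(n-1)$-dimensional Lebesgue measure by $H^{n-1}$. Applying Corollary~\ref{cor:Marichal-Mossinghoff-approx} with $r = J/H$ then gives
\[
    \Vol_{n-1}\bigl(\{\bnu \in [0,H]^n : \balpha \cdot \bnu = J\}\bigr) = \Vol_{n-1}\bigl(\{\bmu \in [0,1]^n : \balpha \cdot \bmu = 0\}\bigr) H^{n-1} + O(|J| H^{n-2}),
\]
and multiplying through by the factor $\gcd(\balpha)/\|\balpha\|$ from the definition~\eqref{eq:Valpha-BJ} of $V_{\balpha}$ yields the first claim.

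For the second formula, I would instead substitute $\bnu = 2H\bmu$, obtaining a bijection onto $\{\bmu \in [-1/2,1/2]^n : \balpha \cdot \bmu = J/(2H)\}$ with $(n-1)$-dimensional volume scaling factor $(2H)^{n-1}$. Invoking Corollary~\ref{cor:Marichal-Mossinghoff-approx} at $r = J/(2H)$: when $k \leq 2$ the error is $O(|J|/H)$, and after multiplying by $(2H)^{n-1}$ it becomes $O(|J| H^{n-2})$; when $k \geq 3$ the stronger error $O(|r|^2) = O(|J|^2/H^2)$ becomes $O(|J|^2 H^{n-3})$. Multiplying by $\gcd(\balpha)/\|\balpha\|$ then produces the second claim in each subcase.

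The proof is essentially mechanical once Corollary~\ref{cor:Marichal-Mossinghoff-approx} is available; the only point requiring care is tracking the rescaling factor correctly (in particular the extra factor of $2$ for $[-H,H]^n$, whose side length is $2H$ rather than $H$) and selecting the appropriate regime of the corollary according to whether $k \leq 2$ or $k \geq 3$. I do not anticipate any substantive obstacle.
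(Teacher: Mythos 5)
Your proposal is correct and matches the paper's own argument, which likewise combines the scaling relation $V_{\balpha}(H\cB;0)=H^{n-1}V_{\balpha}(\cB;0)$ implied by~\eqref{eq:Valpha-BJ} with Corollary~\ref{cor:Marichal-Mossinghoff-approx} applied at $r=J/H$ (resp.\ $r=J/(2H)$). You simply spell out the rescaling substitution and the factor $\gcd(\balpha)/\|\balpha\|$ more explicitly than the paper does.
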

\begin{proof}
The only thing to note is that~\eqref{eq:Valpha-BJ} implies $V_{\alpha}(H \cB; 0) = H^{n - 1} V_{\alpha}(\cB; 0)$ for any box $\cB$.
\end{proof}

In the special case where all coordinates of $\balpha$ are positive, we use the following lemma on computing the volume of the related region.

\begin{lemma}
For any $\balpha \in \R^n$ with positive coordinates and for any $r \in \R$,
\[ \Vol_{n - 1}(\{\bnu \in \R^n : \bnu \geq \bfzero, \balpha \cdot \bnu = r\}) = \frac{\|\balpha\|}{(n - 1)! \prod_i \alpha_i} \max\{r, 0\}^{n - 1}. \]
\end{lemma}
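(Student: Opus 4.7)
First I would dispose of the trivial case $r<0$: since every $\bnu\ge\bfzero$ and every $\alpha_i>0$ force $\balpha\cdot\bnu\ge 0$, the level set is empty and both sides equal $0$, in agreement with $\max\{r,0\}^{n-1}=0$. So the content is in the case $r\ge 0$.

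The plan for $r\ge 0$ is to compute the sublevel volume $V(r):=\Vol_n\bigl(\{\bnu\in\R^n:\bnu\ge\bfzero,\ \balpha\cdot\bnu\le r\}\bigr)$ directly, and then differentiate to read off the $(n-1)$-dimensional level-set volume via the co-area formula applied to the linear function $\bnu\mapsto \balpha\cdot\bnu$ whose gradient has constant norm $\|\balpha\|$. Concretely, the sublevel set is the simplex with vertices $\bfzero$ and $(r/\alpha_i)\bfe_i$ for $i=1,\dots,n$, so a standard simplex-volume computation (e.g.\ row-reducing the matrix whose columns are the vectors $(r/\alpha_i)\bfe_i$, or an iterated affine change of variables $u_i=\alpha_i\nu_i/r$ reducing to the standard simplex $\{\bfu\ge\bfzero,\ \sum u_i\le 1\}$ of volume $1/n!$) yields
\[ V(r)=\frac{r^n}{n!\prod_i\alpha_i}. \]

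Next I would apply the co-area formula
\[ V(r)=\int_0^r \frac{\Vol_{n-1}(\{\bnu\ge\bfzero:\balpha\cdot\bnu=s\})}{\|\balpha\|}\,ds, \]
valid because $\bnu\mapsto\balpha\cdot\bnu$ is a smooth submersion on $\R^n$ with $\|\nabla(\balpha\cdot\bnu)\|=\|\balpha\|$. Differentiating in $r$ gives
\[ \Vol_{n-1}\bigl(\{\bnu\ge\bfzero:\balpha\cdot\bnu=r\}\bigr)=\|\balpha\|\,V'(r)=\frac{\|\balpha\|\,r^{n-1}}{(n-1)!\prod_i\alpha_i}, \]
which is precisely the claimed formula for $r\ge 0$. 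Combining with the trivial case finishes the argument.

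This is essentially a routine calculation; I do not anticipate a real obstacle. The only mild care needed is to justify the co-area step, which one can avoid entirely by instead using Cavalieri's principle: slice the sublevel simplex by hyperplanes parallel to $\balpha\cdot\bnu=r$, noting that moving along the unit normal $\balpha/\|\balpha\|$ changes $\balpha\cdot\bnu$ at rate $\|\balpha\|$, so the thickness element is $ds/\|\balpha\|$. Either route gives the same identification and the same conclusion.
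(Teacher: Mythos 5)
Your proposal is correct and in substance coincides with the paper's proof: both arguments come down to the volume $r^n/(n!\prod_i\alpha_i)$ of the simplex with vertices $\bfzero$ and $(r/\alpha_i)\bfe_i$, converted to the $(n-1)$-dimensional face area via the factor $\|\balpha\|$ (the paper scales to $r=1$ and uses the cone formula, volume $=$ base $\times$ height$/n$ with height $=\operatorname{dist}(\bfzero,\{\balpha\cdot\bnu=1\})=\|\balpha\|^{-1}$, while you integrate slices by co-area/Cavalieri and differentiate in $r$). These are equivalent packagings of the same computation, so there is no gap.
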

\begin{proof}
Note that by scaling, we may assume $r=1$. We now prove
\[ \frac{\|\balpha\|^{-1} \Vol_{n - 1}(\{\bnu \in \R^n : \bnu \geq \bfzero, \balpha \cdot \bnu = 1\})}{n} = \frac{1}{n! \prod_i \alpha_i}. \]
We view both sides as the volume of the simplex whose vertices are $\bfzero$ and $\alpha_1^{-1} \bfe_1, \ldots, \alpha_n^{-1} \bfe_n$, where the $\bfe_i$'s are the standard basis vectors of $\R^n$.
This is obvious for the right hand side, and for the left hand side, we just need to check that the distance from $\bfzero$ to the face $\{\bnu \in \R^n : \bnu \geq \bfzero, \balpha \cdot \bnu = 1\}$ is $\|\balpha\|^{-1}$.
This holds since the projection of $\bfzero$ to this face is $\|\balpha\|^{-2} \balpha$.
\end{proof}

Thus in the case where $\balpha$ has positive coordinates, we obtain another formula for $V_{\balpha}([0, H]^n; J)$ when $J\ll H$.

\begin{corollary}\label{cor:Valpha-BJ-pos}
For any $\balpha \in \R^n$ with positive coordinates and for any real numbers $H \geq 1$ and $J \geq 0$ with $J \le H$,
\[ V_{\balpha}([0, H]^n; J) = \frac{\gcd(\balpha)}{(n - 1)! \prod_i \alpha_i} J^{n - 1}. \]
\end{corollary}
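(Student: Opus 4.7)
The plan is to observe that, under the hypotheses $J \le H$ and $\balpha$ having positive integer coordinates (so each $\alpha_i \ge 1$, since $V_{\balpha}$ is defined via $\gcd(\balpha)$), the upper bound in the box $[0,H]^n$ is automatically enforced by the nonnegativity condition together with the hyperplane equation. This reduces the corollary directly to the volume formula in the lemma immediately preceding it.

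More concretely, first I would unfold the definition from \eqref{eq:Valpha-BJ} to write
\[ V_{\balpha}([0, H]^n; J) = \frac{\gcd(\balpha)}{\|\balpha\|} \, \Vol_{n - 1}\left(\{\bnu \in [0, H]^n : \balpha \cdot \bnu = J\}\right). \]
Then I would verify the set equality
\[ \{\bnu \in [0, H]^n : \balpha \cdot \bnu = J\} = \{\bnu \in \R^n : \bnu \ge \bfzero,\ \balpha \cdot \bnu = J\}: \]
if $\bnu \ge \bfzero$ and $\balpha \cdot \bnu = J$, then for each $i$ the term $\alpha_i \nu_i$ is nonnegative and bounded by the total sum, so $\alpha_i \nu_i \le J$, which combined with $\alpha_i \ge 1$ and $J \le H$ yields $\nu_i \le J/\alpha_i \le H$. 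Finally, I would invoke the preceding lemma to evaluate the volume of the right-hand set as $\|\balpha\| J^{n-1}/((n-1)! \prod_i \alpha_i)$, and substituting this back gives the claimed formula.

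There is no real obstacle here: the whole content is the observation that the upper-box constraint is vacuous in this regime, so the corollary is a direct rescaling of the unbounded simplex-volume computation from the preceding lemma.
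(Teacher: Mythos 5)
Your proposal is correct and matches the paper's (implicit) reasoning: the paper states this corollary without a separate proof, remarking only that the relevant bodies are contained in the box $[0,J]^n$, which is exactly your observation that the constraint $\nu_i \le H$ is vacuous when $\alpha_i \ge 1$ and $J \le H$, after which the preceding lemma and the definition in \eqref{eq:Valpha-BJ} give the formula with the factor $\|\balpha\|$ cancelling.
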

The lack of $H$ variables in this corollary is expected, since all points in the corresponding bodies are bounded by, for example, the box $[0,J]^n$.

\section{Multiplicatively dependent vectors of large rank}\label{sec:upper-bounds}

Our main strategy of proving our main results is counting the number of vectors in $\cS_n(H,J;\balpha)$ based on its multiplicative rank, defined as follows.
\begin{definition}\label{def:multrank}
Let $\overline{\Q}$ be an algebraic closure of the rational numbers $\Q$. For each $\bm{\nu}$ in $(\overline{\Q^*})^n$, define $s$ as the \textit{(multiplicative) rank} of $s$ in the following way. If $\bm{\nu}$ has a coordinate which is a root of unity, we put $s=0$. Otherwise, $s$ is defined as be the largest integer with $1\le s \le n$ such that any $s$ coordinates of $\bm{\nu}$ form a multiplicatively independent vector.
\end{definition}

 Note that the rank $s$ of a multiplicatively dependent vector $\bm\nu \in (\overline{\Q^*})^n$, which is our main object, satisfies \begin{align*}
    0\le s\le n-1.
\end{align*}
 Now, denote
\[ S_{n,r}(H,J;\balpha)=\#\{ \bm{\nu}\in \cS_n(H,J;\balpha) \colon \text{ the multiplicative rank of }\bnu\text{ is } r\}. \]
From this definition, we obtain the following identity: \begin{equation}\label{eqn:sumrankD}
    S_{n}(H,J;\balpha)=S_{n,0}(H,J;\balpha)+S_{n, 1}(H,J;\balpha)+\dots+S_{n, n-1}(H,J;\balpha).
\end{equation}

We first obtain some upper bounds for $S_{n,r}(H,J;\balpha)$. These bounds hold uniformly with respect to $J$ and $k$, the number of nonzero coordinates in $\balpha$. We first state our bound in the following form, which corresponds to the indices of the corresponding multiplicatively dependent set.

\begin{lemma}\label{lem:SnIupper}
Let $n$ be a positive integer, $\balpha \in \Z^n$ be a nonzero vector, and $I$ be a subset of $\{1, 2, \ldots, n\}$.
Let $S_{n, I}(H, J; \balpha)$ denote the set of vectors $\bnu \in \cS_n(H, J; \balpha)$ such that the collection $(\nu_i)_{i \in I}$ is multiplicatively dependent but $(\nu_i)_{i \in I'}$ is multiplicatively independent for any proper subset $I' \subsetneq I$.
Then there exists constants $c, c_I > 0$, depending only on $n$ and $I$, such that
\[ \#S_{n,I}(H,J;\balpha) < \begin{cases}
        c_I H^{n-1-\lceil\#I/2\rceil}\exp(c\log H/\log\log H) & \quad \text{if} \quad \{i : a_i \neq 0\} \not\subseteq I, \\
        c_I H^{n-\lceil\#I/2\rceil}\exp(c\log H/\log\log H) & \quad \text{if} \quad \{i : a_i \neq 0\} \subseteq I.
    \end{cases} \]
\end{lemma}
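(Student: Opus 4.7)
Let $m = \#I$. The plan is to write $\#S_{n, I}(H, J; \balpha)$ as a product of two factors: (a) the number of minimally multiplicatively dependent tuples $(\nu_i)_{i \in I}$ with each $\nu_i \in [-H, H] \setminus \{0\}$, and (b) the number of compatible tails $(\nu_i)_{i \notin I}$ with each $\nu_i \in [-H, H] \setminus \{0\}$ satisfying $\balpha \cdot \bnu = J$. For part (b), fixing the $I$-part and applying Theorem~\ref{thm:shifted-lattice-box-count} to the residual linear equation yields $O(H^{n - m - 1})$ when $\{i : \alpha_i \neq 0\} \not\subseteq I$, and $O(H^{n - m})$ otherwise. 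Since $\lfloor m/2 \rfloor + (n - m - 1) = n - 1 - \lceil m/2 \rceil$ and $\lfloor m/2 \rfloor + (n - m) = n - \lceil m/2 \rceil$, it therefore suffices to bound the count in (a) by $O(H^{\lfloor m/2 \rfloor} \exp(c \log H/\log\log H))$.

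\textbf{Bounding the minimally dependent tuple count.}
For each such tuple, Lemma~\ref{lem:exp} supplies an exponent vector $\bk = (k_i)_{i \in I} \in \Z^m$ with $\max_i |k_i| < c_m (\log H)^{m - 1}$ and $\prod_{i \in I} \nu_i^{k_i} = 1$; minimality forces $k_i \neq 0$ for every $i \in I$. Since there are $O((\log H)^{m(m-1)})$ admissible $\bk$, it suffices to bound the count for each. For fixed $\bk$, let $I_{\pm} = \{i \in I : \pm k_i > 0\}$; after negating $\bk$ if necessary, we may assume $|I_+| \leq \lfloor m/2 \rfloor$. Rewrite the relation as
\[ N \coloneqq \prod_{i \in I_+} \nu_i^{k_i} = \prod_{i \in I_-} \nu_i^{-k_i}, \]
a positive integer. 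Fixing $(\nu_i)_{i \in I_+}$ (with $O(H^{|I_+|})$ choices) determines $N$, and a prime-by-prime analysis of the valuation equations $\sum_{i \in I_-}(-k_i) v_p(\nu_i) = v_p(N)$ shows that the number of compatible $(\nu_i)_{i \in I_-}$ is bounded by $2^{|I_-|} \tau(N)^{|I_-| - 1}$. Summing over $(\nu_i)_{i \in I_+}$ using the submultiplicativity $\tau(AB) \leq \tau(A) \tau(B)$ together with the standard moment estimate $\sum_{n \leq H} \tau(n^k)^r = O_{k, r}(H(\log H)^{c(k, r)})$ yields $O(H^{|I_+|} (\log H)^{D})$ for some exponent $D$ depending on $\bk$ and $m$. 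For $\bk$ with some large $|k_i|$, the extra size restriction $N \leq H^{\min(s_+, s_-)}$, with $s_{\pm} = \sum_{i \in I_\pm} |k_i|$, forces $(\nu_i)_{i \in I_+}$ to range over a much smaller set, providing savings that compensate for the potentially larger divisor moments. The sum over the $O((\log H)^{m(m-1)})$ admissible $\bk$ contributes only an $\exp(O(\log\log H))$ factor, which is absorbed into the claimed $\exp(c \log H/\log\log H)$.

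\textbf{Main obstacle.}
The crux of the argument is establishing the tight bound $O(H^{\lfloor m/2 \rfloor} \exp(c \log H/\log\log H))$ for the minimally dependent tuple count, improving significantly upon the naive bound $O(H^{m-1})$ one gets by fixing $m - 1$ coordinates arbitrarily and leaving the last determined up to sign. The essential idea is to split $I$ according to the signs of a primitive exponent vector and fix only the smaller side; this reduces the remaining coordinates to being controlled by a divisor-type quantity that can be bounded via classical divisor moment estimates. Some care is required when some $|k_i|$ is of order $(\log H)^{m - 1}$, since naive moment bounds then become weak; in that regime one must leverage the size restriction imposed on $N$ by the relation itself to extract compensating savings.
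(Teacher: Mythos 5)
Your skeleton matches the paper's proof: handle the coordinates outside $I$ through the linear equation (giving $H^{n-\#I-1}$ or $H^{n-\#I}$ according to whether some nonzero $\alpha_i$ survives outside $I$), then use Lemma~\ref{lem:exp}, split $I$ by the signs of the exponents, and fix the smaller side. The gap is in the crux step, namely your count of the larger side. You bound the number of admissible $(\nu_i)_{i\in I_-}$ by $2^{|I_-|}\tau(N)^{|I_-|-1}$ and then sum with divisor-moment estimates $\sum_{n\le H}\tau(n^k)^r\ll H(\log H)^{c(k,r)}$; these are not uniform in $k$, and here the exponents run up to $(\log H)^{\#I-1}$, where such bounds are useless. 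Your proposed rescue, the restriction $N\le H^{\min(s_+,s_-)}$, fails precisely in the balanced regime: take $\#I=4$ and $\bk=(K,K,-K,-K)$ (or the primitive variant $(K,K+1,-K,-K-1)$) with $K\asymp(\log H)^2$, which is admissible for Lemma~\ref{lem:exp}. Then $s_+=s_-$, so the size restriction imposes no constraint at all on $(\nu_1,\nu_2)$, while for $\nu_1$ equal to the product of the first $\sim\log H/\log\log H$ primes one has $\tau(\nu_1^K)\ge(K+1)^{\omega(\nu_1)}=H^{2+o(1)}$; hence your intermediate bound $\sum_{\nu_1,\nu_2\le H}\tau(\nu_1^K\nu_2^K)^{|I_-|-1}$ is at least $H\cdot H^{2+o(1)}=H^{3+o(1)}$, far above the target $H^{\lfloor\#I/2\rfloor}\exp(c\log H/\log\log H)=H^{2+o(1)}$. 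So the asserted "compensating savings" do not exist there, and the estimate $O(H^{\lfloor\#I/2\rfloor}\exp(c\log H/\log\log H))$ for the minimally dependent $I$-tuples is not established; this is exactly the delicate case your last paragraph flags but does not resolve.

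The paper avoids divisor moments altogether, and this is where the factor $\exp(c\log H/\log\log H)$ in the statement actually comes from. After fixing the exponent vector and the coordinates on the smaller side, every coordinate $\nu_{i_0}$ on the larger side is an integer with $|\nu_{i_0}|\le H$ all of whose prime factors divide the fixed product of the small-side coordinates, which has absolute value at most $H^n$; by the de Bruijn-type bound of Lemma~\ref{lem:divdeBruijn-div} there are therefore at most $\exp(c\log H/\log\log H)$ choices for each such coordinate, uniformly in the exponents. Substituting this observation for your $\tau(N)^{|I_-|-1}$ step (rather than trying to track exact solution counts of the multiplicative equation) closes the gap; as written, the key estimate has a genuine hole.
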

\begin{proof}
Denote $t=\#I$. We first bound the number of choices of $\nu_i$ with $i\notin I$. Trivially, if we fix all the coordinates $\nu_i$ with $i \in I$, there are at most $(2H)^{n - t}$ choices for the coordinates $\nu_i$ with $i \notin I$. This completes our argument when $\{i : a_i \neq 0\} \subseteq I$.

Now, suppose that $\{i : a_i \neq 0\} \not\subseteq I$. Then the coordinates $\nu_i$ with $i \notin I$ satisfies
\[ \sum_{i \notin I} \alpha_i \nu_i = J - \sum_{i \in I} \alpha_i \nu_i. \] Since the left-hand side is nonzero, there are at most $(2H)^{n - t - 1}$ choices for the coordinates $\nu_i$ with $i \notin I$ for a fixed choice of $\nu_i$ with $i\in I$. This establishes our upper bound of $\nu_i$ with $i\notin I$.

Next, we bound the number of possible choices for $\nu_i$ with $i \in I$. Our argument mostly follows the proof of~\cite[Propositions~1.3 and~1.5]{PSSS}, however with some simplified arguments with respect to our setup.
Let $I=\{j_1,\dots,j_t\}$. Then, from the minimality of $I$, there exist nonzero integers $k_{j_1}$, $\dots$, $k_{j_t}$, with
\begin{equation}\label{eqn:sI}
    \nu_{j_1}^{k_{j_1}}\dots \nu_{j_t}^{k_{j_t}}=1.
\end{equation}Furthermore, using Lemma~\ref{lem:exp}, we may let
\begin{align*}
    \max \{|k_{j_1}|,\dots,|k_{j_t}| \} < c_3(\log H)^{t-1}.
\end{align*}

Let $P$ be the set of indices $i$ such that $k_i$ is positive, and $N$ be the set of indices $i$ for which $k_i$ is negative. We note that either $\#P$ or $\#N$ is at least $\lceil t/2\rceil $. Also, we may rewrite~\eqref{eqn:sI} as \begin{equation}\label{eqn:sIPN}    
\prod_{i\in P} \nu_{j_i}^{k_{j_i}}=\prod_{i\in N} \nu_{j_i}^{-k_{j_i}}.
\end{equation}

Based on this observation, we now let $I_0$ be the set of indices $i$ from $I$ such that $k_i$ is positive if $\# P\ge \lceil t/2\rceil $, and the the set of indices $i$ such that $k_i$ is negative otherwise. Let $t_0=\#I_0$. In either case we have
\[ t_0 \ge \lceil t/2\rceil. \]
We rewrite~\eqref{eqn:sIPN} as \begin{equation}\label{eqn:sI0}    
\prod_{i\in I_0} \nu_{i}^{|k_i|}=\prod_{i\in I\backslash I_0} \nu_{i}^{|k_i|}.
\end{equation}
Observe that the number of different equations that may arise from~\eqref{eqn:sI0} is at most
\[ (2c_3 (\log H)^{t-1})^t (2H)^{t-t_0} < c_4 H^{t-t_0} (\log H)^{c_5}, \]
for some positive constants $c_4$, $c_5$ that only depend on $n$.

We now fix $\nu_{i}$ for $i\in I \backslash I_0$ and bound the number of choices for $\nu_{i_0}$ for each $i_0 \in I_0$. By~\eqref{eqn:sI0}, $\nu_{i_0}$ divides $\prod_{i\in I\backslash I_0} \nu_i$, and $\left|\prod_{i\in I\backslash I_0} \nu_i\right| \leq H^n$. Therefore, since $|\nu_{i_0}| \leq H$, we use Lemma~\ref{lem:divdeBruijn-div} to conclude there are at most $\exp(c_6 \log H/\log \log H)$ choices of $\nu_{i_0}$ for some positive constant $c_6$ that only depend on $n$. Then there are at most 
\begin{equation}\label{eqn:vji}
    H^{t-t_0} \exp(c_7 \log H/\log \log H) \le H^{\lfloor t/2 \rfloor}  \exp(c_7 \log H/\log \log H) 
\end{equation}
ways of picking $\nu_{i_0}$ across all $i_0 \in I_0$, for some positive constant $c_7$ depending only on $n$. Multiplying~\eqref{eqn:vji} with our bound for $i\not\in I$ completes the proof.
\end{proof}

Applying Lemma~\ref{lem:SnIupper} with respect to our multiplicative rank notation, we obtain the following corollary.

\begin{corollary}\label{cor:Snrupper}
Let $n$ be a positive integer, and $\balpha \in \Z^n$ with $k$ nonzero coordinates. Then, for all nonnegative integers $r < n$,
\begin{equation*}
        S_{n,r}(H,J;\balpha)< \begin{cases}
            c_{r} H^{n-1-\lceil  (r+1)/2\rceil}\exp(c\log H/\log\log H) & \quad \text{if} \quad r\le k-2,\\
            c_{r} H^{n-\lceil  (r+1)/2\rceil}\exp(c\log H/\log\log H) &\quad \text{otherwise},
        \end{cases}
\end{equation*}
for constants $c,c_r>0$ that only depend on $n$ and $r$.
\end{corollary}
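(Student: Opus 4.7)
The plan is to deduce Corollary~\ref{cor:Snrupper} from Lemma~\ref{lem:SnIupper} by a union bound over the possible minimal multiplicatively dependent subsets of coordinates. Starting from any $\bnu \in \cS_n(H,J;\balpha)$ of multiplicative rank exactly $r < n$, I would first observe that there must exist a subset $I \subseteq \{1,\ldots,n\}$ of cardinality $r+1$ with $(\nu_i)_{i\in I}$ multiplicatively dependent and every proper subcollection independent. This is immediate from Definition~\ref{def:multrank}: since every $r$ coordinates are independent, any dependence relation on the full vector involves at least $r+1$ coordinates, and a minimal such set has size exactly $r+1$ with all proper subsets independent. The corner case $r=0$ fits the same template by choosing $I=\{i\}$ for some $i$ with $\nu_i$ a root of unity, the empty collection being vacuously independent.

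By a union bound over the $\binom{n}{r+1}$ choices of $I$,
\[ S_{n,r}(H,J;\balpha) \;\leq\; \sum_{\#I = r+1} \#S_{n,I}(H,J;\balpha), \]
and each summand is controlled by Lemma~\ref{lem:SnIupper}.

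The only remaining bookkeeping is to determine which branch of Lemma~\ref{lem:SnIupper} applies. The set $\{i : \alpha_i \neq 0\}$ has size $k$, so the containment $\{i : \alpha_i \neq 0\} \subseteq I$ forces $\#I \geq k$, equivalently $r \geq k-1$. Thus when $r \leq k-2$, every $I$ of size $r+1$ omits at least one nonzero coordinate of $\balpha$, and we uniformly obtain the stronger bound $c_I H^{n-1-\lceil(r+1)/2\rceil} \exp(c \log H/\log\log H)$; in the complementary regime $r \geq k-1$, some $I$ may contain all nonzero coordinates, forcing the weaker bound $c_I H^{n-\lceil(r+1)/2\rceil} \exp(c \log H/\log\log H)$. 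Absorbing the finite sum over the at most $\binom{n}{r+1}$ subsets into a new constant $c_r$ depending only on $n$ and $r$ yields the stated inequality. I do not anticipate any substantive obstacle here; the statement is essentially a bookkeeping repackaging of Lemma~\ref{lem:SnIupper} in terms of the multiplicative rank rather than the distinguished dependent subset.
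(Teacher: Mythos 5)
Your proposal is correct and is essentially the paper's own argument: the paper derives the corollary by exactly this application of Lemma~\ref{lem:SnIupper}, decomposing a rank-$r$ vector via a minimal dependent index set $I$ of size $r+1$ (which exists by Definition~\ref{def:multrank}, with the $r=0$ case handled by a root-of-unity coordinate), taking a union bound over the $\binom{n}{r+1}$ choices of $I$, and noting that the containment $\{i:\alpha_i\neq 0\}\subseteq I$ can only occur when $r+1\ge k$, i.e.\ $r\ge k-1$. No gaps; the bookkeeping on the branch condition and the absorption of constants into $c_r$ match the paper.
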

Note that for $r>k-2$, our bound is exactly the bound in~\cite[Equation (1.22)]{PSSS} for the number of multiplicatively dependent integer vectors without the hyperplane condition.

Applying this corollary separately for $k\ge 5$ and $k<5$, we obtain the following statement.

\begin{corollary}\label{cor:mainterm}
    Let $H$, $J$ be integers, $n>0$ be a positive integer and $\balpha\in \Z^n$ with $k$ nonzero coordinates. Then, as $H\to \infty$, 
\begin{equation}\label{eqn:sumrankDorder}\begin{split}
    &S_n(H,J;\balpha)= 
        S_{n,0}(H,J;\balpha)+S_{n,1}(H,J;\balpha)\\ &\quad +\begin{cases}
            S_{n,2}(H,J;\balpha)+S_{n,3}(H,J;\balpha) +O(H^{n-3+o(1)}) & \quad \text{if} \quad k< 5,\\
            O(H^{n-3+o(1)}) &\quad \text{if}\quad k\ge 5.
        \end{cases}
\end{split}
\end{equation}
\end{corollary}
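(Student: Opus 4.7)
The plan is to apply the decomposition~\eqref{eqn:sumrankD} and absorb all terms $S_{n,r}(H,J;\balpha)$ of sufficiently large rank into the error term $O(H^{n-3+o(1)})$ via Corollary~\ref{cor:Snrupper}. More precisely, when $k \ge 5$ I will show that $S_{n,r}(H,J;\balpha) = O(H^{n-3+o(1)})$ for every $r \ge 2$, and when $k < 5$ I will do the same for every $r \ge 4$. Since the sum ranges over at most $n$ ranks and the factor $\exp(c \log H / \log \log H)$ is $H^{o(1)}$, these bounds will combine to yield the stated formula.

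To carry out the verification, I will split according to the two branches of Corollary~\ref{cor:Snrupper}. In the branch $r \le k-2$, the estimate reads $H^{n-1-\lceil (r+1)/2 \rceil + o(1)}$, which is $O(H^{n-3+o(1)})$ exactly when $\lceil (r+1)/2 \rceil \ge 2$, i.e.\ when $r \ge 2$. In the branch $r > k-2$, the estimate is $H^{n-\lceil (r+1)/2 \rceil + o(1)}$, which is $O(H^{n-3+o(1)})$ exactly when $\lceil (r+1)/2 \rceil \ge 3$, i.e.\ when $r \ge 5$.

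It remains to check that these two criteria cover all required cases. For $k \ge 5$ any $r \in \{2,3\}$ falls into the first branch, and any $r \ge 5$ always satisfies the second criterion; the borderline value $r = 4$ is handled by the first branch when $k \ge 6$, and by the second branch when $k = 5$, since then $\lceil 5/2 \rceil = 3$. For $k < 5$ and $r \ge 4$ we always have $r > k-2$ and $\lceil (r+1)/2 \rceil \ge 3$, so the second criterion applies throughout. The calculation is then a matter of summing finitely many terms, each bounded by $H^{n-3+o(1)}$. The only delicate point is the threshold case $(r,k) = (4,5)$, where the exponent lands exactly at $n-3$; any weakening of the second branch of Corollary~\ref{cor:Snrupper} in this regime would force $S_{n,4}$ to be retained as a main term, so this is the step on which the current form of the corollary is tightest.
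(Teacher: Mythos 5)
Your proposal is correct and is essentially the paper's own argument: the paper likewise deduces the corollary by combining the decomposition~\eqref{eqn:sumrankD} with Corollary~\ref{cor:Snrupper}, absorbing $\exp(c\log H/\log\log H)=H^{o(1)}$ into the error term, with the same case analysis over the two branches (including the borderline case $r=4$, $k=5$, where the second branch gives exponent exactly $n-3$). No gaps.
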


Note that our heuristic expects the main term of $S_n(H,J;\balpha)$ to be of order $H^{n-2}$ for most cases. However, applying Corollary~\ref{cor:Snrupper} for $k<5$ only implies $S_{n,2}(H,J;\balpha),S_{n,3}(H,J;\balpha)\le H^{n-2+o(1)}$, which is not sufficient for our purposes. Therefore, we devote Section~\ref{sec:too-many-zeroes} to improve this bound.

On the other hand, we will compute the asymptotics for $S_{n,0}(H,J;\balpha)$ and $S_{n,1}(H,J;\balpha)$ in Section~\ref{sec:small-rank}, which will complete our computation for $S_{n}(H,J;\balpha)$.

\section{Hyperplanes with few nonzero coordinates}\label{sec:too-many-zeroes} \subsection{Introduction}

With regards of Corollary~\ref{cor:mainterm}, we now consider the case $k<5$. The case of $k=0$, which corresponds to $\balpha = \bfzero_n$, is trivial except when $J=0$. In this case, the problem becomes counting multiplicatively dependent integer vectors without any other condition, which is already done by Pappalardi, Sha, Shparlinski and Stewart in~\cite[Theorem~1.4]{PSSS}. Our focus in this section is computing/bounding $S_{n, r}(H, J; \balpha)$ for $r = 2, 3$ when $1 \leq k \leq 4$.

For the case $k=1$, which reduces to the case $\balpha = \bfe_1 = (1, 0, \ldots, 0)$, we obtain the following statement, which is proven in Section~\ref{sec:k=1}.

\begin{proposition}\label{prp:k=1}
Let $n \geq 3$, $J \neq 0$ and $H$ be integers. 
We have \begin{align*}
    S_n(H,J;\bfe_1)=\begin{cases} (2H)^{n-1} &\quad \text{when} \quad |J|=1,\\
        S_{n,0}(H,J;\bfe_1)+S_{n,1}(H,J;\bfe_1) \\ \: + \dfrac{2^{n-1}(n-1)(n-2)}{f(|J|)-1}H^{n-2}+O(H^{n-5/2}) &\quad \text{when} \quad |J|>1,
    \end{cases}
    \end{align*} where $f$ is defined in Theorem~\ref{thm:main-k=1}.
\end{proposition}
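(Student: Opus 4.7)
If $|J|=1$, the coordinate $\nu_1=\pm 1$ is a root of unity, so every vector $(J,\nu_2,\ldots,\nu_n)$ with $\nu_i\ne 0$ is automatically multiplicatively dependent; this yields $(2H)^{n-1}$ vectors. For $|J|>1$, I invoke Corollary~\ref{cor:mainterm} (with $k=1$) to reduce the task to proving
\[ S_{n,2}(H,J;\bfe_1)+S_{n,3}(H,J;\bfe_1)=\frac{2^{n-1}(n-1)(n-2)}{f(|J|)-1}\,H^{n-2}+O(H^{n-5/2}), \]
since higher-rank contributions are already absorbed into $O(H^{n-3+o(1)})$ by the corollary.

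First I bound $S_{n,3}$: any rank-$3$ vector contains a minimal multiplicatively dependent $4$-subset $I\subseteq\{1,\ldots,n\}$, and by Lemma~\ref{lem:exp} the exponent vector $\bk$ realizing this relation satisfies $\lVert\bk\rVert_\infty<c_n(\log H)^{n-1}$, so there are only $(\log H)^{O(1)}$ admissible $\bk$. For each such $\bk$ the relation $\prod_{i\in I}\nu_i^{k_i}=1$ (with $\nu_1=J$ if $1\in I$) is an equation in at most three unknowns; fixing one of them reduces it to a two-variable product $\nu_p^{k_p}\nu_q^{k_q}=C$, whose integer solutions number $O(H^{o(1)})$ by the divisor bound. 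Summing over the fixed unknown ($O(H)$ values), over $\bk$, and over the $(2H)^{n-4}$ free coordinates, I obtain $S_{n,3}=O(H^{n-3+o(1)})$, absorbed in $O(H^{n-5/2})$.

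Next I analyze $S_{n,2}$. Writing $f=f(|J|)$ and $|J|=f^m$ with $m\ge 1$, I classify rank-$2$ vectors by their minimal dependent triple $T$, distinguishing Case A ($1\in T$, so $T=\{1,i,j\}$) from Case B ($T\subseteq\{2,\ldots,n\}$). In Case A, the relation $J^a\nu_i^b\nu_j^c=1$ with $abc\ne 0$ corresponds to the curve $J^aX^bY^c=1$ in $(X,Y)$. Excluding factors $X-D$ or $Y-D$ (which would force $c=0$ or $b=0$) and factors $Y=\alpha X+\beta$ with $\beta\ne 0$ (which lead to a contradiction upon differentiating the resulting identity in $X$), the only linear factors take the form $Y=\delta f^t X$ for $t\in\Z\setminus\{0\}$ and $\delta\in\{\pm 1\}$; each such $(t,\delta)$ arises by choosing $(a,b,c)=(-t,-m,m)/\gcd(m,t)$, doubled if necessary to satisfy the sign condition $\epsilon_J^a\delta^c=1$ with $\epsilon_J=\sgn J$. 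For each unordered pair $\{i,j\}\subseteq\{2,\ldots,n\}$ the number of integer pairs $(\nu_i,\nu_j)\in([-H,H]\setminus\{0\})^2$ with $\nu_j=\delta f^t\nu_i$ is $2\lfloor H/f^t\rfloor$ for $t>0$ and $2\lfloor H/f^{|t|}\rfloor$ for $t<0$ (since $f^{|t|}\mid\nu_i$ is required); summing over $t\ne 0$ and $\delta=\pm 1$, and excluding the $O(\log H)$ pairs where $\nu_i$ is itself a power of $f$ (which would lower the rank), gives $8H/(f-1)+O(\log H)$ admissible pairs. Multiplying by $\binom{n-1}{2}$ pairs and $(2H)^{n-3}$ free coordinates yields the main term $\frac{2^{n-1}(n-1)(n-2)}{f-1}H^{n-2}+O(H^{n-3})$; overcounts between two simultaneously satisfying pairs (disjoint or sharing one index) contribute $O(H^{n-3})$, and restricting free coordinates to $|\nu_k|\ge 2$ modifies the count by a further $O(H^{n-3})$.

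The remaining contributions, namely Case A with a non-linear curve factor and Case B (where any linear factor $\nu_\ell=C\nu_i$ would force a pair dependency and contradict rank~$2$), are handled by Proposition~\ref{prp:CCDN}: each irreducible factor of degree $d\ge 2$ contributes $O(d\,H^{1/2}(\log H+2)+d^3H^{1/3}(\log H+d))$ integer points, so summing over the $(\log H)^{O(1)}$ exponent vectors admissible by Lemma~\ref{lem:exp}, over the choices of pair or triple of indices, and multiplying by $(2H)^{n-3}$ or $(2H)^{n-4}$ free coordinates, yields a total of $O(H^{n-5/2}(\log H)^{O(1)})$, absorbed into the stated error term. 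The main obstacle is the sharp isolation of the linear family $\nu_j=\delta f^t\nu_i$ as the sole source of the $H^{n-2}$ main term, together with the clean bookkeeping across the rank filtration and the multiple overlapping pairs in Case A.
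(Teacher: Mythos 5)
Your overall strategy is the same as the paper's: reduce via Corollary~\ref{cor:mainterm}, show $S_{n,3}(H,J;\bfe_1)=O(H^{n-3+o(1)})$, and extract the main term of $S_{n,2}$ from pairs $(\nu_i,\nu_j)$ whose ratio is $\pm f(|J|)^t$. Your direct identification of these lines (without the paper's preliminary reduction to $J$ positive and not a perfect power) and the count of $8H/(f-1)+O(\log H)$ admissible pairs per unordered pair of indices do reproduce the constant $2^{n-1}(n-1)(n-2)/(f(|J|)-1)$ correctly. But there is a genuine gap in how you dispose of the remaining contributions.

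The problem is Case B. When the minimal dependent triple is $T=\{i,j,\ell\}\subseteq\{2,\ldots,n\}$, the relation $\nu_i^a\nu_j^b\nu_\ell^c=1$ involves three unbounded unknowns, and no linear equation ties them together (the hyperplane only pins $\nu_1=J$); it therefore defines a surface in $\A^3$, not a plane curve, so Proposition~\ref{prp:CCDN} cannot be applied to it as you do, and multiplying a plane-curve point bound by $(2H)^{n-4}$ free coordinates leaves one of the three unknowns of $T$ unaccounted for. Moreover, your parenthetical claim that any linear factor in Case B would force a pair dependency and contradict rank two is false: for instance $(\nu_i,\nu_j,\nu_\ell)=(x,2x,4)$ satisfies $x^2(2x)^{-2}\cdot 4=1$ and is pairwise multiplicatively independent for generic $x$, so Case B does contain one-parameter linear families; they only contribute $O(H^{n-3})$ because the third coordinate is pinned to finitely many values. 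The bound you actually need, $O(H^{n-3+o(1)})$ for Case B, is exactly what Lemma~\ref{lem:SnIupper} provides (this is how the paper handles it), or can be recovered by first fixing the variable that sits alone on one side of the relation at cost $O(H)$ and then confining the other two via Lemma~\ref{lem:divdeBruijn-div} or via CCDN applied to the resulting genuine plane curve.

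Two smaller defects: in your rank-three bound, when $1\notin I$ the relation has four unknowns (not ``at most three''), and ``the divisor bound'' is the wrong tool, since after fixing coordinates the constant $C$ can be as large as $H^{c(\log H)^{n-1}}$ (the exponents are only bounded by Lemma~\ref{lem:exp}), so $\tau(C)$ need not be $H^{o(1)}$; the correct argument notes that the remaining variables are at most $H$ with all prime factors dividing a fixed integer of radical at most $|J|H^{n}$ and invokes Lemma~\ref{lem:divdeBruijn-div}, after choosing which variable(s) to fix so that the remaining ones lie on the same side of the equation. Finally, your CCDN-based treatment of the nonlinear components accumulates $(\log H)^{O(1)}$ factors, so as written you only reach an error term $O(H^{n-5/2}(\log H)^{O(1)})$ rather than the stated $O(H^{n-5/2})$; the paper avoids these logs in the rank-two analysis by the elementary perfect-power count in the case of unequal exponents.
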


For the case $k=2,4$, we provide a power-saving improvement  of~\eqref{eqn:sumrankDorder} which is proven in Section~\ref{sec:2<=k<=4}.

\begin{proposition}\label{prp:k=2,3,4}
Let $\balpha\in \Z^n$ with $k$ nonzero coordinates, and let $J \ne 0$ and $H$ be integers. If $k=2,4$, then
\[ S_n(H,J;\balpha)=S_{n,0}(H,J;\balpha)+S_{n,1}(H,J;\balpha)+O(H^{n-5/2} (\log H)^{16}). \] If $k=3$, there exists a constant $C^{(2);k=3}_{\balpha;J}\ge 0$   with \[ S_n(H,J;\balpha)=S_{n,0}(H,J;\balpha)+S_{n,1}(H,J;\balpha)+C^{(2);k=3}_{\balpha;J}H^{n-2}+O(H^{n-5/2} (\log H)^{16}).
\] 
\end{proposition}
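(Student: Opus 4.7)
The plan is to first invoke Corollary~\ref{cor:mainterm}, which for $k \le 4$ gives
\[ S_n(H,J;\balpha) = S_{n,0}(H,J;\balpha) + S_{n,1}(H,J;\balpha) + S_{n,2}(H,J;\balpha) + S_{n,3}(H,J;\balpha) + O(H^{n-3+o(1)}), \]
reducing the proposition to estimating $S_{n,2}$ and $S_{n,3}$. For each $r \in \{2,3\}$ I would decompose $S_{n,r}$ as a sum over the minimally multiplicatively dependent subset $I \subseteq \{1,\ldots,n\}$ of size $r+1$ attached to $\bnu$. When $\{i : \alpha_i \ne 0\} \not\subseteq I$, Lemma~\ref{lem:SnIupper} already yields $O(H^{n-3+o(1)})$ per $I$, which is absorbed into the error term; the substantive case is $\{i : \alpha_i \ne 0\} \subseteq I$, which forces $|I| \ge k$.

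In that case, Lemma~\ref{lem:exp} furnishes a multiplicative relation among $(\nu_i)_{i \in I}$ with exponents of size at most $c(\log H)^r$, and one matches each pair $(k,r)$ to a determinant-method bound from Section~\ref{sec:bompil}. Concretely, $(k,r) = (2,2)$ and $(3,2)$ use Theorems~\ref{thm:bompil-2var} and~\ref{thm:bompil-3var} directly; for $(k,r) = (2,3)$ and $(3,3)$, I would first fix the one coordinate in $I \setminus \{i : \alpha_i \ne 0\}$, losing a factor of at most $2H+1$, and then reduce to those same theorems after absorbing the fixed coordinate into the constants $A$ and $B$; and for $(k,r) = (4,3)$, where $I$ equals the support of $\balpha$, one invokes Theorem~\ref{thm:bompil-4var}. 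The case $(k,r) = (4,2)$ never falls in this regime since $|I| = 3 < 4$. Multiplying each bound by the $O((\log H)^{r(r+1)})$ admissible exponent vectors, the $O(1)$ positions of $I$, and the $(2H+1)^{n-|I|}$ free coordinates yields a contribution of $O(H^{n-5/2}(\log H)^c)$ for some $c \le 16$. The worst exponent $c=16$ is saturated at $(k,r) = (4,3)$: the $K$ factor in Theorem~\ref{thm:bompil-4var} multiplied by the $O(K_0^4)$ sum over exponent vectors with $K_0 = O((\log H)^3)$, plus the extra $\log H$ from the theorem's statement, contributes $(\log H)^{16}$; the cases $(2,3)$ and $(3,3)$ reach the same exponent via the $H$-factor from the fixed coordinate.

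The remaining ingredient is the degenerate contribution for $(k,r) = (3,2)$, explicitly excluded by Theorem~\ref{thm:bompil-3var}: the vectors with $\alpha_i \nu_i = J$ for some index $i$ in the support of $\balpha$. Fixing such $i$ pins $\nu_i = J/\alpha_i$ and forces the other two coordinates of $I$ onto the line $\alpha_j \nu_j + \alpha_\ell \nu_\ell = 0$, and the multiplicative relation collapses to an equation $\nu^{m} = \mathrm{const}$ in a single variable. A finite case analysis, separating according to whether $J/\alpha_i$ and $-\alpha_j/\alpha_\ell$ are multiplicatively dependent in $\Q^*$, produces either an $O(1)$ or an $O(H)$ family on the line; combined with the $H^{n-3}$ free coordinates this yields exactly $C^{(2);k=3}_{\balpha;J} H^{n-2}$ with an explicit nonnegative constant, plus error of smaller order. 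The analogous degeneracies for $(k,r) = (3,3)$, and the residual linear contributions in the proof of Theorem~\ref{thm:bompil-4var} (arising from the $O(K)$ special $W$ at which $f_5$ splits off a line), contribute only $O(H^{n-3+o(1)})$ and are absorbed.

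The hardest part is this last step. I need inclusion–exclusion across the three possible degenerate indices, since a single $\bnu$ may satisfy $\alpha_i \nu_i = J$ for more than one $i$, and I must distinguish between the arithmetic subcases (trivial ratios, roots of unity, genuine multiplicative dependencies among rationals) to produce a clean constant rather than a quantity that oscillates with $H$. By contrast, the determinant-method applications are essentially mechanical once Lemma~\ref{lem:exp} is in hand, and the only delicate accounting is the bookkeeping of the cumulative $\log H$ powers across the nested summations.
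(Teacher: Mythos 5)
Your skeleton is essentially the paper's: decompose by rank via Corollary~\ref{cor:mainterm}, use Lemma~\ref{lem:SnIupper} when the support of $\balpha$ is not contained in the minimal dependent index set $I$, feed the remaining configurations into Theorems~\ref{thm:bompil-2var}--\ref{thm:bompil-4var} (fixing a non-support coordinate of $I$ when $|I|$ exceeds $k$), and extract the $k=3$ main term from the degenerate solutions $\alpha_j\nu_j=J$ excluded by Theorem~\ref{thm:bompil-3var}; your logarithm bookkeeping and the shape of $C^{(2);k=3}_{\balpha;J}$ agree with the paper.

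There is, however, a genuine gap at rank $3$: you implicitly assume the relation supplied by Lemma~\ref{lem:exp} has the balanced shape $\nu_{j_1}^{k_1}\nu_{j_2}^{k_2}=\nu_{j_3}^{k_3}\nu_{j_4}^{k_4}$, but it may just as well be of the form $\nu_{j_1}^{k_1}\nu_{j_2}^{k_2}\nu_{j_3}^{k_3}=\nu_{j_4}^{k_4}$ as in~\eqref{eqn:rank3-case2}. For that shape your matching fails: Theorem~\ref{thm:bompil-4var} is stated only for the $2{+}2$ form, so the case $(k,r)=(4,3)$ is not covered, and in the case $(k,r)=(3,3)$ where the zero-coefficient coordinate of $I$ is the singleton, fixing it leaves $\nu_{j_1}^{k_1}\nu_{j_2}^{k_2}\nu_{j_3}^{k_3}=W^{k_4}$, which is not of the form treated by Theorem~\ref{thm:bompil-3var} either. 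You cannot fall back on Lemma~\ref{lem:SnIupper}, because in the relevant regime the support of $\balpha$ lies inside $I$ and the lemma then only gives $O(H^{n-2+o(1)})$, i.e.\ the order of the main term. The paper disposes of all $3{+}1$ relations before the case analysis, see~\eqref{eqn:rank3-case2-count}: fix the singleton coordinate in $O(H)$ ways; each of the other three coordinates of $I$ then divides a power of it, so Lemma~\ref{lem:divdeBruijn-div} gives $H^{o(1)}$ choices for each, hence $O(H^{n-3+o(1)})$ such vectors in total, which is absorbed. Adding this step closes the gap. Two smaller slips, harmless once noted: for $(k,r)=(2,3)$ the set $I\setminus\{i:\alpha_i\neq 0\}$ has two elements, and you must fix exactly one of them (fixing both costs $H^2$ and, with only $(\log H)^{O(1)}$ remaining solutions, lands you at $H^{n-2+o(1)}$, too large); and in the $k=3$ degenerate analysis the ``independent'' case produces $O(\log H)$, not $O(1)$, admissible values of the free coordinate on the line, which is still absorbed in the error term.
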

Note that for the case $k=3$, we also provide some possible conditions of $\balpha$ and $J$ such that $C^{(2);k=3}_{\balpha;J}=0$.

We now describe our main strategy for proving Proposition~\ref{prp:k=2,3,4}.
Due to Lemma~\ref{lem:exp}, to count $S_{n, 2}(H, J; \balpha)$, we need to count the number of vectors $\bnu \in \cS_n(H, J; \balpha)$ such that there exists distinct indices $j_1, j_2, j_3$ and positive integers $k_1, k_2, k_3 \ll (\log H)^2$ with
\begin{equation}\label{eqn:rank2}
\nu_{j_1}^{k_1}\nu_{j_2}^{k_2} =\nu_{j_3}^{k_3}.
\end{equation}
Meanwhile, to count $S_{n, 3}(H, J; \balpha)$, we need to count the number of vectors $\bnu \in \cS_n(H, J; \balpha)$ such that there exists distinct indices $j_1, j_2, j_3, j_4$ and positive integers $k_1, k_2, k_3, k_4 \ll (\log H)^3$ with either
\begin{equation}\label{eqn:rank3-case1}
    \nu_{j_1}^{k_1}\nu_{j_2}^{k_2} =\nu_{j_3}^{k_3}\nu_{j_4}^{k_4}
\end{equation}
    or
\begin{equation}\label{eqn:rank3-case2}
    \nu_{j_1}^{k_1}\nu_{j_2}^{k_2} \nu_{j_3}^{k_3}=\nu_{j_4}^{k_4}.
\end{equation}
Note that by the proof of Lemma~\ref{lem:SnIupper} (with $t_0 = 3$), there are at most
\begin{equation}\label{eqn:rank3-case2-count}
    \binom{n}{4} (\log H)^{12} \cdot O(H^{n - 3} \exp(c \log H/\log \log H)) = O(H^{n - 3 + o(1)})
\end{equation}
    vectors $\bnu \in S_n(H, J; \balpha)$ of rank $3$ satisfying~\eqref{eqn:rank3-case2} for some indices $j_1, j_2, j_3, j_4$ with $k_1, k_2, k_3, k_4 \ll (\log H)^3$.
Thus, to count vectors in $S_{n, 3}(H, J; \balpha)$, we only need to consider~\eqref{eqn:rank3-case1}.

\subsection{The case \texorpdfstring{$k=1$}{k=1}}\label{sec:k=1}

To count $S_n(H,J;\bfe_1)$, we need to  count multiplicatively dependent vectors $\bnu \in \Z^n$ with $0<|\nu_i|\le H$ for $2\le i \le n$ and $\nu_1=J$. When $|J|=1$, any arbitrary choices of other coordinates of $\bnu$ always guarantee a multiplicatively dependent vector. Thus, there are $(2H)^{n-1}$ choices of $\bnu$ in this case. Thus, we may now assume $|J|\ne 1$. Furthermore, we note that if $(J,\dots)$ is a multiplicatively dependent vector, then so does $(-J, \dots)$ and $(J^r,\dots)$ for any positive integer $r$. Thus, we may assume that $J > 1$ and that $J$ is not a perfect power.

We first handle the case $r=3$. Consider a vector $\bnu \in \cS_n(H,J;\bfe_1)$ of rank 3; thus, this vector satisfies~\eqref{eqn:rank3-case1} or~\eqref{eqn:rank3-case2}. As mentioned before, we only need to consider the case where the vector satisfies~\eqref{eqn:rank3-case1}. If $1\notin \{j_1,j_2,j_3,j_4\}$, Lemma~\ref{lem:SnIupper} implies that there are at most $O(H^{n-3+o(1)})$ possible choices for the vector $\bnu$.

Next, suppose $1 \in \{j_1,j_2,j_3,j_4\}$. Without loss of generality, let $j_4=1$, which implies
\[ \nu_1^{k_1}\nu_2^{k_2}=\nu_3^{k_3}J^{k_4}. \]
We fix $\nu_3$ in $H$ ways and use Lemma~\ref{lem:divdeBruijn-div} to obtain at most $H^{o(1)}$ choices for $(\nu_1,\nu_2)$. We then apply Lemma~\ref{lem:exp} for bounding the number of exponent choices, and choose the other coordinates of $\bnu$ in $H^{n-4}$ ways. Hence, in the case $j_4=1$, we also obtain at most $O(H^{n-3+o(1)})$ possible vectors $\bnu$. Combining with~\eqref{eqn:rank3-case2-count}, we get
\[ S_{n,3}(H,J;\bfe_1) = O(H^{n-3+o(1)}). \]

Now, we turn our attention to the case $r=2$. Note that in this section, the term $S_{n,2}(H,J;\bfe_1)$ also contributes in the main term of $S_n(H,J;\bfe_1)$, as can be seen with tuples of the form $(J,x,Jx,\dotsc) \in \cS_n(H,J;\bfe_1)$ for any integer $0<|x|\le H/J$.  Thus, we need to compute the correct asymptotic for this quantity.

Following~\eqref{eqn:rank2}, we need to count $\bnu \in \cS_n(H,J;\bfe_1)$ such that there exist some indices $j_1$, $j_2$, $j_3$ and positive integers $k_1, k_2, k_3 \ll (\log H)^2$ with
\begin{equation}\label{eqn:k=1, r=2} \nu_{j_1}^{k_1}\nu_{j_2}^{k_2} =\nu_{j_3}^{k_3}. \end{equation}
Furthermore, we need to restrict $\bnu$ so that $|\nu_i| \neq 1$ for any $i \leq n$ and $|\nu_{i_1}|^{k_1} \neq |\nu_{i_2}|^{k_2}$ for any positive integers $k_1, k_2$ whenever $i_1 \neq i_2$, so that $\bnu$ has rank $2$. Again, we first fix these exponents.

By Lemma~\ref{lem:SnIupper}, when $1\notin \{j_1,j_2,j_3\}$, there are at most $O(H^{n-4} H^{1+o(1)})=O(H^{n-3+o(1)})$ possible vectors $\bnu$. Thus, we may assume $1 \in \{j_1,j_2,j_3\}$. However, when $j_3=1$, we may repeat the argument to obtain at most $O(H^{n-3+o(1)})$ possible vectors $\bnu$. Hence, we may assume $j_1=1$ and obtain 
\begin{equation}\label{eqn:j1=1}
    J^{k_1}\nu_{j_2}^{k_2} = \nu_{j_3}^{k_3}.
\end{equation}
We may fix $j_2$ and $j_3$ and assume that $\nu_{j_2},\nu_{j_3}>0$ and $\gcd(k_2,k_3)=1$. Thus, we need to count the number of possible solutions $(\nu_{j_2},\nu_{j_3})$ to~\eqref{eqn:j1=1} for a fixed choice of $(k_1,k_2,k_3)$, such that the corresponding vector $\bnu$ is of rank exactly $2$. Note that in particular, the rank condition implies $\nu_{j_2}, \nu_{j_3} > 1$.

We first consider the case where $k_2=k_3$, which easily implies $(\nu_{j_2},\nu_{j_3})=(x,J^rx)$ for an integer $r > 0, x>1$ with $J^r x \le H$. Thus, up to sign changes, our problem now is equivalent to counting pairs $(J^r,x)$ such that $x$ is not a power of $J$ and $J^r x \le H$. Then, there are
\[ \sum_{0 < r < \log H/\log J} \left(\frac{H}{J^r} + O\left(\frac{\log H}{\log J}\right)\right) = \dfrac{H}{J-1}+ O((\log H)^2) \]
possible choices of $(J^r,x)$ in this case for a fixed $J$ and positive $x$.
Considering all possible signs of these coordinates, we obtain that there are
\begin{equation*}
    \dfrac{4}{J-1} H + O((\log H)^2)
\end{equation*}
possible choices for $(\nu_{j_2},\nu_{j_3})$ in this case.

We now consider the case when $k_2\ne k_3$. Suppose that $k_2>k_3$. Then, $\nu_{j_3}^{k_3}/J^{k_1}$ is a $k_2$-th perfect power. Thus, there are
\begin{equation*}
\sum_{2 \le k_2 \ll (\log H)^2} O(H^{1/k_2}) = O(H^{1/2}) \end{equation*}
possible choices for $(\nu_{j_2},\nu_{j_3})$ in this case. We also obtain a similar upper bound when $k_2<k_3$. These arguments imply there are \begin{equation}\label{eqn:k=1, r<=2}
    \dfrac{4}{J-1} H(2H - 2)^{n-3} + O(H^{n-5/2})= \dfrac{2^{n-1}}{J-1}H^{n-2}+O(H^{n-5/2})
\end{equation} vectors that satisfy~\eqref{eqn:j1=1} for a fixed $j_2$ and $j_3$ with rank at least $1$.

Now we bound and exclude the number of vectors satisfying~\eqref{eqn:k=1, r=2} with rank exactly $1$ that satisfy~\eqref{eqn:k=1, r=2}, which means the number of vectors $\bnu$ that satisfy~\eqref{eqn:k=1, r=2} and \begin{equation*}
\nu_{j_4}^{k_4}=\nu_{j_5}^{k_5}
\end{equation*} for some indices $j_4,j_5$ and positive integers $k_4,k_5\ll \log H$. Note that this system of equation consists of at least three different variables.  Following the arguments leading to~\eqref{eqn:j1=1}, we may let $j_1=1$ and replace~\eqref{eqn:k=1, r=2} with \eqref{eqn:j1=1}. 

We note that if $\{j_4,j_5\}\not\subseteq \{1,j_2,j_3\}$, there are trivially $O(H^{n-3+o(1)})$ vectors $\bnu$ that satisfy this case by fixing the variables in each of the equations. Therefore,  we may now assume $\{j_4,j_5\}\subseteq \{1,j_2,j_3\}$. This implies (without loss of generality) either \begin{equation*}
    J^{k_5} = \nu_{j_2}^{k_4}  \quad \text{or} \quad \nu_{j_2}^{k_4} = \nu_{j_3}^{k_5}, 
\end{equation*} In either of these cases, $\nu_{j_2}$ and $\nu_{j_3}$ are uniquely determined from this equation and~\eqref{eqn:j1=1}, for a fixed $k_1$, $\dots$, $k_5$. Therefore, adding over all possible exponents, there are at most $O(H^{n-3}(\log H)^8)$ vectors that have to be excluded.

Returning to the original counting, we note there are $(n-1)(n-2)$ choices for $(j_2,j_3)$. Therefore, returning to~\eqref{eqn:k=1, r<=2}, we obtain
\[ S_{n,2}(H,J;\bfe_1)=\dfrac{2^{n-1}(n-1)(n-2)}{J-1}H^{n-2}+O(H^{n-5/2}). \]
    when $n\ge 4$. This completes the proof of Proposition~\ref{prp:k=1}.

\subsection{The case \texorpdfstring{$2\leq k\leq 4$}{2<=k<=4}}\label{sec:2<=k<=4}

Instead of using Corollary \ref{cor:Snrupper} to bound $S_{n,r}$ in the case $k=2,3,4$, we use  bounds that are derived from Theorems \ref{thm:bompil-2var}, \ref{thm:bompil-3var}, and \ref{thm:bompil-4var}, respectively. These arguments yield Proposition \ref{prp:k=2,3,4}.

\subsubsection{The case $k=2$} We note that for all $\bnu\in \cS_n(H,J;\balpha)$, we have
\begin{equation}\label{eqn:alpha-k=2}
\alpha_{i_1}\nu_{i_1}+\alpha_{i_2}\nu_{i_2}=J,
\end{equation}
for some indices $i_1$ and $i_2$ such that the corresponding coefficients in $\balpha$ are nonzero.

We first consider the case $r=2$, and consider the related vector $\bnu$ of rank 2. Then $\bnu$ satisfies~\eqref{eqn:rank2} for some $j_1, j_2, j_3$ and positive integers $k_1, k_2, k_3 \ll (\log H)^2$. Applying Lemma~\ref{lem:SnIupper}, when $\{i_1, i_2\} \not\subseteq \{j_1, j_2, j_3\}$, the number of such vectors across all exponents $k_1, k_2, k_3$ is at most
\[ n^3 (\log H)^6 \cdot O(H^{n - 3 + o(1)}) = O(H^{n - 3 + o(1)}). \]
    
We now assume that $\{i_1, i_2\} \subseteq \{j_1, j_2, j_3\}$. Without loss of generality, we may work with the system of equations \begin{align*}
    &\nu_{j_1}^{k_1}\nu_{j_2}^{k_2}=\nu_{j_3}^{k_3} \quad \text{or} \quad \nu_{j_1}^{k_1}\nu_{j_3}^{k_2}=\nu_{j_2}^{k_3} ,\\
    &\alpha_{j_1}\nu_{j_1}+\alpha_{j_2}\nu_{j_2}=J.
\end{align*}
In this case, recalling that $k_1,k_2,k_3\ll (\log H)^2$, we apply Theorem~\ref{thm:bompil-2var} to obtain at most $$O((k_1+k_2+k_3)H^{1/2} \log H)$$ tuples $(\nu_{j_1},\nu_{j_2},\nu_{j_3})$ for a fixed $k_1,k_2,k_3$. Adding over all $k_1,k_2,k_3$, we obtain at most $$O(H^{1/2}(\log H)^{9})$$ tuples $(\nu_{j_1},\nu_{j_2},\nu_{j_3})$ overall. Considering the other $n-3$ coordinates of $\bnu$, we obtain
\begin{equation}\label{eqn:k=2, r=2}
S_{n,2}(H,J;\balpha)\ll H^{n-5/2}(\log H)^{9}.
\end{equation} 

We proceed to the case $r=3$, where we instead consider the system of equations that comes from~\eqref{eqn:rank3-case1} and \eqref{eqn:alpha-k=2}. Again, by Lemma~\ref{lem:SnIupper}, when $\{i_1,i_2\}\not\subseteq \{j_1,j_2,j_3,j_4\}$, there are $O(H^{n-3+o(1)})$ possible choices for $\bnu$.

Therefore, we let $\{i_1,i_2\}\subseteq \{j_1,j_2,j_3,j_4\}$. In this case, we may first fix (without loss of generality) $\nu_{j_4}$, with $j_4\ne i_1,i_2$, in $H$ ways. Then, for the rest of the coordinates, we recall $k_1,\dots,k_4\ll (\log H)^3$ and use Theorem~\ref{thm:bompil-2var} to obtain at most $$O((k_1+k_2+k_3+k_4) H^{1/2} \log H)$$ solutions for a fixed $k_1,k_2,k_3,k_4$.  Adding over all possible $(k_1,k_2,k_3,k_4)$, we obtain there are at most \[
O(H^{3/2}(\log H)^{16})
\] choices for the tuple $(\nu_{j_1}, \nu_{j_2}, \nu_{j_3}, \nu_{j_4})$. By considering the other $n-4$ coordinates of $\bnu$, we obtain \begin{equation}\label{eqn:k=2, r=3}
S_{n,3}(H,J;\balpha)\ll H^{n-5/2}(\log H)^{16}.
\end{equation}
Substituting~\eqref{eqn:k=2, r=2} and~\eqref{eqn:k=2, r=3} to~\eqref{eqn:sumrankDorder} completes the proof of Proposition~\ref{prp:k=2,3,4} for $k=2$.

\subsubsection{The case $k=4$}  We recall that Corollary~\ref{cor:Snrupper} and~\eqref{eqn:sumrankD} implies \[
S_n(H,J;\balpha)=S_{n,0}(H,J;\balpha)+S_{n,1}(H,J;\balpha)+S_{n,3}(H,J;\balpha)+O(H^{n-3+o(1)}).
\] We now derive an upper bound for $S_{n,3}(H,J;\balpha)$. Now, note that for all $\bnu\in \cS_n(H,J;\balpha)$,  \begin{equation}\label{eqn:alpha-k=4}
\alpha_{i_1}\nu_{i_1}+\alpha_{i_2}\nu_{i_2}+\alpha_{i_3}\nu_{i_3}+\alpha_{i_4}\nu_{i_4}=J,
\end{equation}
for some indices $i_1$, $i_2$, $i_3$, $i_4$ such that the corresponding coefficients in $\balpha$ are nonzero. By similar argument as in the case $k = 2$ for $r = 3$, the problem reduces to bounding the number of solutions to~\eqref{eqn:rank3-case1} and~\eqref{eqn:alpha-k=4} with $\{i_1, i_2, i_3, i_4\} = \{j_1, j_2, j_3, j_4\}$.

We apply Theorem~\ref{thm:bompil-4var} to this equation for each choice of $(k_1,k_2,k_3,k_4)$. Arguing similarly as in the previous cases, we obtain that there are at most $O(H^{n-5/2}(\log H)^{16})$ choices of $\bnu$ for this case. Hence, \[
S_{n,3}(H,J;\balpha)\ll H^{n-5/2} (\log H)^{16}
\]in this case, which in turns also imply \begin{equation*}
S_{n}(H,J;\balpha)=S_{n,0}(H,J;\balpha)+S_{n,1}(H,J;\balpha)+O(H^{n-5/2} (\log H)^{16}).
\end{equation*}This completes the proof of Proposition~\ref{prp:k=2,3,4} for $k=4$.

\subsubsection{The case $k=3$}
In this case, we note that for all $\bnu\in \cS_n(H,J;\balpha)$,  \begin{equation}\label{eqn:alpha-k=3}
\alpha_{i_1}\nu_{i_1}+\alpha_{i_2}\nu_{i_2}+\alpha_{i_3}\nu_{i_3}=J,
\end{equation} for some indices $i_1$, $i_2$, $i_3$ such that the corresponding coefficients in $\balpha$ are nonzero.

We first consider the case $r=2$, and consider the related vector $\bnu$ of rank 2. With similar argument as in the case $k=2$, the problem reduces to counting the number of integer solutions to~\eqref{eqn:rank2} and~\eqref{eqn:alpha-k=3} with $\{i_1, i_2, i_3\} = \{j_1, j_2, j_3\}$ and $1<|\nu_{i_1}|,|\nu_{i_2}|,|\nu_{i_3}|\le H$. First, we bound integer solutions with $\nu_{i_1} \alpha_{i_1}, \nu_{i_2} \alpha_{i_2}, \nu_{i_3} \alpha_{i_3} \neq J$.

Without loss of generality, we consider the system of equations
\begin{align*}
    &\nu_{i_1}^{k_1}\nu_{i_2}^{k_2}=\nu_{i_3}^{k_3},\\
    &\alpha_{i_1}\nu_{i_1}+\alpha_{i_2}\nu_{i_2}+\alpha_{i_3}\nu_{i_3}=J,
\end{align*}
for a fixed $k_1$, $k_2$, $k_3$, with $\alpha_{j_1} \nu_{j_1}, \alpha_{j_2} \nu_{j_2}, \alpha_{j_3} \nu_{j_3} \neq J$. Arguing similarly as the case $k=2$ but using Theorem~\ref{thm:bompil-3var}, we obtain at most \[
O(H^{n-5/2}(\log H)^{9})
\] such vector $\bnu$ with $\alpha_{i_1} \nu_{i_1}, \alpha_{i_2} \nu_{i_2}, \alpha_{i_3} \nu_{i_3} \neq J$. In particular, following the proof of Theorem~\ref{thm:bompil-3var}, if one of these is true: \begin{itemize}
    \item None of $|\alpha_{i_1}|$, $|\alpha_{i_2}|$, $|\alpha_{i_3}|$ are divisors of the other numbers,
    \item $\alpha_1,\alpha_2,\alpha_3 \nmid J$,
\end{itemize}then we obtain
\begin{equation}\label{eqn:k=3,r=2}
S_{n,2}(H,J;\balpha) \ll H^{n-5/2}(\log H)^{9}.
\end{equation}

It remains to count vectors $\bnu \in \cS_{n, 2}(H, J; \balpha)$ such that $(\nu_{i_1}, \nu_{i_2}, \nu_{i_3})$ is multiplicatively dependent and $\alpha_j \nu_j = J$ for some $j \in \{i_1, i_2, i_3\}$. There are at most three triples $(\nu_{i_1}, \nu_{i_2}, \nu_{i_3})$ such that $\nu_j \alpha_j = J$ holds for at least two values of $J$, and so there are only $O(H^{n - 3})$ vectors $\bnu \in \cS_{n, 2}(H,J;\balpha)$ such that $\alpha_j \nu_j = J$ for at least two values of $J$. Thus we now count individually across each $j \in \{i_1, i_2, i_3\}$.

Without loss of generality, assume that $j = i_1$. We may let $|\alpha_{i_1}|\mid |J|$. First, if $|J| = |\alpha_{i_1}|$, then $|\nu_{i_1}| = 1$ and so $\bnu$ has rank zero, not two.

Hence, we may let $|\alpha_{i_1}|$ properly divides $|J|$. Let $x = J/\alpha_{i_1} \in \Z$ and $y = -\alpha_{i_2}/\alpha_{i_3} \in \Q$. By swapping $\alpha_{i_2}$ with $\alpha_{i_3}$, we may assume without loss of generality that $|y| > 1$. Since $\nu_{i_1} = x$, the linear condition $\balpha \cdot \bnu$ reduces to $\alpha_{i_2} \nu_{i_2} + \alpha_{i_3} \nu_{i_3} = 0$, or
\[ \nu_{i_3} = (-\alpha_{i_2}/\alpha_{i_3}) \nu_{i_2} = y\nu_{i_2}. \]
We now divide the counting based on whether $|y|$ is a rational power of $|x|$.

First, suppose that $|y|$ is not a rational power of $|x|$. Then by Lemma~\ref{lem:exp}, $(x, \nu_{i_2}, y \nu_{i_2})$ being multiplicatively dependent of rank two implies that there exist nonzero integers $m_1, m_2, m_3 \ll (\log H)^2$, not necessarily positive, such that $x^{m_1} \nu_{i_2}^{m_2} (y \nu_{i_2})^{m_3} = 1$, or equivalently
\[ x^{m_1} y^{m_3} = \nu_{i_2}^{-m_2 - m_3}. \]
Note that $|x|^{m_1} |y|^{m_3} \neq 1$. Thus $|\nu_{i_2}|$ is a rational power of $|x|^{m_1} |y|^{m_3}$, and so there are $O(\log H)$ choices for $\nu_{i_2}$, which implies that there are $O(H^{n - 3} \log H)$ choices for $\bnu$ in this case.

Next, suppose that $|y|$ is a rational power of $|x|$. Then $(x, \nu_{i_2}, y \nu_{i_2})$ is guaranteed to be multiplicatively dependent. Since $\nu$ is of rank two, we have $|y|\ne 1$. Then, $(x, \nu_{i_2}, y \nu_{i_2})$ has rank two if and only if $|\nu_{i_2}|$ is not a rational power of $|x|$. Thus there are
\[ 2|y|^{-1} H + O(\log H) \]
choices of $\nu_{i_2}$, which amounts to
\[ \left(2|y|^{-1} H + O(\log H)\right)(2H + O(\log H))^{n - 3} = 2^{n - 2} |y|^{-1} H + O(H^{n - 3} \log H) \]
choices of $\bnu$, where $(2H + O(\log H))^{n - 3}$ counts the number of choices of the coordinates of $\bnu$ other than $\nu_{i_1}, \nu_{i_2}, \nu_{i_3}$ such that the rank of $\bnu$ is exactly two (i.e. none are $\pm 1$ or powers of other coordinates). 

To conclude our arguments, the case $\alpha_{i_1} \nu_{i_1} = J$ gives a contribution of $2^{n - 2} |y|^{-1} H^{n - 2} + O(H^{n - 3} \log H)$ if $|J/\alpha_{i_1}|$ is an integer greater than $1$, $|\alpha_{i_2}| > |\alpha_{i_3}|$, and $|\alpha_{i_2}/\alpha_{i_3}|$ is a rational power of $|J/\alpha_{i_1}|$ not equal to $1$, and $O(H^{n - 3} \log H)$ otherwise.

Considering contributions from the three cases $\alpha_j \nu_j = J$ across all $j = i_1, i_2, i_3$, we get that
\begin{equation}\label{eqn:k=3,r=2,balpha}
S_{n,2}(H,J;\balpha)= C^{(2);k=3}_{\balpha;J} H^{n-2}+O(H^{n-5/2}(\log H)^{9}),
\end{equation}
    with
\begin{equation}\label{eqn:k=3,r=2,main}
    C^{(2)}_{\alpha;J} = 2^{n - 2} \sum_{j, j', j''} |\alpha_{j'}/\alpha_{j''}|^{-1},
\end{equation}
    where the sum runs across all permutations of $(i_1, i_2, i_3)$ such that the following holds:
\begin{itemize}
    \item   $|J/\alpha_j|$ is a integer greater than $1$, and
    \item   $|\alpha_{j'}/\alpha_{j''}|$ is an integer greater than $1$ that is a rational power of $|J/\alpha_j|$.
\end{itemize}

For the case $r=3$, we consider the related vector $\bnu$ of rank $3$. Arguing similarly as in the case $r=2$ to reduce the number of variables, we obtain a system of equations that consists of~\eqref{eqn:rank3-case1} and~\eqref{eqn:alpha-k=3} with $\{i_1,i_2,i_3\}\subseteq \{j_1,j_2,j_3,j_4\}$. By applying Theorem~\ref{thm:bompil-4var} for each choice of $(k_1,k_2,k_3,k_4)$, there are at most $O(H^{n-5/2}(\log H)^{16})$ possible choices for $\bnu$ in this case. Combining with~\eqref{eqn:rank3-case2-count}, when $r=3$, we obtain
\begin{equation}\label{eqn:k=3,r=3}
S_{n,3}(H,J;\balpha) \ll H^{n-5/2}(\log H)^{16}.\end{equation}
Substituting~\eqref{eqn:k=3,r=2} or~\eqref{eqn:k=3,r=2,balpha} and~\eqref{eqn:k=3,r=3} to~\eqref{eqn:sumrankDorder}, we obtain
\[
S_{n}(H,J;\balpha)=S_{n,0}(H,J;\balpha)+S_{n,1}(H,J;\balpha)+C^{(2);k=3}_{\balpha;J}H^{n-2}+O(H^{n-5/2}(\log H)^{16})
\] and complete the proof of Proposition~\ref{prp:k=2,3,4}.

\section{Multiplicatively dependent vectors of small ranks}\label{sec:small-rank} 
\subsection{Introduction}
The main goal of this section  is to complete the proof of the results in Section~\ref{sec:results} by providing the formulae of the number of related vectors of rank $0$ and $1$. The main argument are provided for $k\ge 4$, which is provided in Section~\ref{sec:k>=4}. The arguments for the case $k=3$ and $k=2$, done in Sections~\ref{sec:k=3-main} and~\ref{sec:k=2-main} respectively, are similar, only that we need to provide some adjustments with regards to the leading coefficient. These arguments complete the proof of Theorem~\ref{thm:main-k>=3} and~\ref{thm:main-k=2}. 

The case $k=1$ is treated separately in Section~\ref{sec:k=1-main}. In particular, this case provides a different asymptotic behaviour for the related vector-counting function, as seen in Theorem~\ref{thm:main-k=1}. The last Section~\ref{sec:positive} discusses the setup where we restrict the vectors $\bnu$ to have strictly positive coordinates, which corresponds to Theorems~\ref{thm:main-positive-integer} and~\ref{thm:main-all-positive-integer}.

Before stating our result, we need to define some vectors related to $\balpha\in \Z^n$. First, for $1\le i\le n$, define  \begin{equation*}
\balpha^*_{i} \coloneqq (\alpha_1,\dots,\alpha_{i-1},\alpha_{i+1},\dots,\alpha_n)\in \Z^{n-1}
\end{equation*} as the vector obtained from removing the $i$-th coordinate in $\balpha$. Next, define
\begin{equation}\label{eqn:alphai1i2} \balpha^-_{i_1,i_2} \coloneqq (\alpha_1,\dots,\alpha_{i_1-1},\alpha_{i_1+1},\dots,\alpha_{i_2-1},\alpha_{i_2+1},\dots,\alpha_n,\alpha_{i_1}-\alpha_{i_2})\in \Z^{n-1} \end{equation}
    as the vector obtained by removing the $i_1$-th and $i_2$-th coordinates of $\balpha$, and appending the resulting vector with $\alpha_{i_1}-\alpha_{i_2}$.
We also define $\balpha^+_{i_1,i_2} $ in a similar way, with $+$ in place of $-$. 
We also denote an indicator function $\delta$ over $\balpha\in \Z^n$ and $J\in \Z$, defined as
\begin{align*}
    \delta_{\balpha}(J)\coloneqq\begin{cases}
        1 & \quad \text{if} \quad \gcd(\balpha)\mid J,\\
        0 & \quad \text{if} \quad \gcd(\balpha)\nmid J.
    \end{cases}
\end{align*}

Related to these objects, we now define
\begin{align}\begin{split}\label{eqn:C0C1}
    & C^{(0)}_{\balpha;J} \coloneqq 2^{n-2}\sum_{i=1}^n [\delta_{\balpha_i^*}(J-\alpha_i)+\delta_{\balpha_i^*}(J+\alpha_i)] \; V_{\balpha_i^*}([-1/2,1/2]^{n-1};0), 
    \\& C^{(1)}_{\balpha;J} \coloneqq 2^{n-2}\sum_{1\le i_1<i_2\le n}\left[\delta_{\balpha^-_{i_1,i_2}}(J) V_{\balpha^-_{i_1,i_2}}([-1/2,1/2]^{n-1};0) + 
    \delta_{\balpha^+_{i_1,i_2}}(J)V_{\balpha^+_{i_1,i_2}}([-1/2,1/2]^{n-1};0)\right], 
    \end{split}
\end{align}
where the notation $V_{\balpha}$ is defined at~\eqref{eq:Valpha-BJ}.

\subsection{The case \texorpdfstring{$k\ge 4$}{k>=4}}\label{sec:k>=4}
In this case, we provide asymptotic formulae for $S_{n,0}(H,J;\balpha)$ and $S_{n,1}(H,J;\balpha)$ in the following propositions.

\begin{proposition}\label{prp:Sn0}
Let $H$ and $J$ be integers, and let $\balpha\in \Z^n$ be a vector with $k$ nonzero coordinates. Suppose that $k\ge 3$.
Then, 
\begin{equation*}
    S_{n, 0}(H, J;\balpha) = C^{(0)}_{\balpha;J}H^{n-2} + 
    \begin{cases}
        O(H^{n-3}+|J|H^{n-3}) & \quad  \text{if} \quad k \le 4, \\
        O(H^{n-3} +|J|^2 H^{n - 4}) & \quad \text{if} \quad k \geq 5.
    \end{cases}
\end{equation*}
\end{proposition}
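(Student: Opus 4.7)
Since $\bnu \in \Z^n$, having multiplicative rank $0$ means at least one coordinate is a root of unity, i.e.\ equals $\pm 1$. Thus
\[ \cS_{n,0}(H,J;\balpha) = \bigcup_{i=1}^n \bigcup_{\epsilon\in\{\pm 1\}} \cA_{i,\epsilon}, \]
where $\cA_{i,\epsilon}$ is the set of $\bnu\in\cS_n(H,J;\balpha)$ with $\nu_i=\epsilon$. The plan is to count $S_{n,0}(H,J;\balpha)$ by inclusion-exclusion over the $2n$ events $\cA_{i,\epsilon}$, keep the main contributions from singletons $|T|=1$, and absorb everything else into the error.

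For $|T|=1$, fixing $\nu_i=\epsilon$ reduces the linear condition to $\balpha_i^*\cdot\bnu^*=J-\epsilon\alpha_i$ for $\bnu^*\in\Z^{n-1}$, where $\bnu^*$ is $\bnu$ with the $i$-th coordinate removed. I will apply Theorem~\ref{thm:shifted-lattice-box-count} with the box $[-H,H]^{n-1}$ to count such $\bnu^*$; this gives
\[ \#\cA_{i,\epsilon} = \delta_{\balpha_i^*}(J-\epsilon\alpha_i)\, V_{\balpha_i^*}([-H,H]^{n-1}; J-\epsilon\alpha_i) + O(H^{n-3}). \]
Next, I will invoke Corollary~\ref{cor:Valpha-BJ-approx} to replace the volume by its value at the origin, picking up the error $O((|J|+|\alpha_i|)\,H^{n-3})$ when $\balpha_i^*$ has at most $2$ nonzero coordinates (which can only happen when $k\in\{3,4\}$ and $\alpha_i\neq 0$), and $O((|J|+|\alpha_i|)^2\, H^{n-4})$ whenever $\balpha_i^*$ has at least $3$ nonzero coordinates (always true when $k\geq 5$). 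Since the $\alpha_i$'s are constants absorbed into the implied constants, summing over all $(i,\epsilon)$ yields
\[ \sum_{i,\epsilon}\#\cA_{i,\epsilon} = 2^{n-2}H^{n-2}\sum_{i=1}^n\bigl[\delta_{\balpha_i^*}(J-\alpha_i)+\delta_{\balpha_i^*}(J+\alpha_i)\bigr]V_{\balpha_i^*}([-1/2,1/2]^{n-1};0) + E, \]
where $E$ matches the two stated error regimes after using $(2H)^{n-2}=2^{n-2}H^{n-2}$. The leading term is $C^{(0)}_{\balpha;J}H^{n-2}$ by definition~\eqref{eqn:C0C1}.

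I still need to control two sources of extra error. First, the count $\#\cA_{i,\epsilon}$ should be restricted to vectors with \emph{nonzero} coordinates, but Theorem~\ref{thm:shifted-lattice-box-count} counts all lattice points on the hyperplane inside the box; the defect from having some $\nu_j=0$ for $j\neq i$ amounts to fixing two coordinates and a linear constraint on $n-2$ others, giving $O(H^{n-3})$ per event. Second, the higher-order terms in inclusion-exclusion — for each $T\subseteq\{1,\dots,n\}$ with $|T|\geq 2$ and each $\epsilon\colon T\to\{\pm 1\}$ — count vectors satisfying a linear constraint on $n-|T|\leq n-2$ free coordinates, hence contribute at most $O(H^{n-|T|-1})=O(H^{n-3})$ each, with $O_n(1)$ such events total. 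Both contributions fit inside $O(H^{n-3})$.

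The main obstacle — really just bookkeeping — is to track the error regimes cleanly: when $k=3$ and $\alpha_i\neq 0$, the reduced vector $\balpha_i^*$ has only $k-1=2$ nonzero coordinates, forcing the weaker bound $O(|J|H^{n-3})$ rather than $O(|J|^2H^{n-4})$ from Corollary~\ref{cor:Valpha-BJ-approx}. This is precisely why the stated error splits at $k=5$. Combining the main term with all error contributions yields the claimed asymptotic formula.
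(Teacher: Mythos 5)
Your proposal is correct and follows essentially the same route as the paper: isolate the coordinates equal to $\pm 1$, count the singleton cases via Theorem~\ref{thm:shifted-lattice-box-count} and Corollary~\ref{cor:Valpha-BJ-approx}, and absorb the overlaps (two or more coordinates equal to $\pm 1$) and zero-coordinate defects into $O(H^{n-3})$. The only slips are cosmetic and harmless: the weak error regime actually arises only when $k=3$ (for $k=4$ every $\balpha_i^*$ still has three nonzero coordinates), and for $|T|\ge 3$ the inclusion--exclusion terms are bounded by $O(H^{n-|T|})$ rather than $O(H^{n-|T|-1})$ when the residual linear constraint degenerates, which is still $O(H^{n-3})$.
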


\begin{proposition}\label{prp:Sn1}
Let $H$ and $J$ be integers, and let $\balpha\in \Z^n$ be a vector with $k$ nonzero coordinates. 
Then,
\begin{equation*}
    S_{n,1}(H,J;\balpha) = C^{(1)}_{\balpha;J} H^{n - 2}+ \begin{cases}
        O( H^{n-5/2}+|J|H^{n-3} ) & \quad \text{if} \quad k = 4 \text{ and } J\ne 0, \\
        O(H^{n - 5/2} + |J|^2 H^{n - 4}) & \quad \text{if} \quad k \ge 5.
    \end{cases}
\end{equation*}
\end{proposition}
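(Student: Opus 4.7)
The strategy is to count rank-one vectors $\bnu$ by identifying a pair of multiplicatively dependent coordinates. Since rank one forbids any coordinate from equalling $\pm 1$, such a pair $(\nu_{i_1},\nu_{i_2})$ must satisfy $|\nu_{i_1}|^p = |\nu_{i_2}|^q$ for some positive integers $p,q$. I split pairs into \emph{trivial} ones (where $|\nu_{i_1}|=|\nu_{i_2}|$, equivalently $\nu_{i_2}=\epsilon\nu_{i_1}$ for some $\epsilon\in\{+1,-1\}$) and \emph{nontrivial} ones (where the absolute values differ). The trivial pairs will produce the main term $C^{(1)}_{\balpha;J}H^{n-2}$, while the nontrivial pairs together with inclusion--exclusion corrections will be absorbed into the error.

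For the trivial contribution, I fix $i_1<i_2$ and $\epsilon\in\{+,-\}$ and substitute $\nu_{i_2}=\epsilon\nu_{i_1}$ into the hyperplane condition, reducing it to $\balpha^{\epsilon}_{i_1,i_2}\cdot \bnu' = J$ on $\bnu'\in\Z^{n-1}\cap [-H,H]^{n-1}$. Since $\balpha^{\epsilon}_{i_1,i_2}$ retains at least $k-2$ nonzero coordinates---hence at least $2$ when $k=4$ and at least $3$ when $k\ge 5$---Theorem~\ref{thm:shifted-lattice-box-count} combined with Corollary~\ref{cor:Valpha-BJ-approx} gives, for each such triple $(i_1,i_2,\epsilon)$,
\[ 2^{n-2}\delta_{\balpha^{\epsilon}_{i_1,i_2}}(J)\,V_{\balpha^{\epsilon}_{i_1,i_2}}([-1/2,1/2]^{n-1};0)\,H^{n-2} + O(|J|H^{n-3}) \]
when $k=4$, with the sharper error $O(|J|^2 H^{n-4})$ when $k\ge 5$. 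Summing over all $\binom{n}{2}$ pairs and both signs yields exactly the constant $C^{(1)}_{\balpha;J}$ from~\eqref{eqn:C0C1}.

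For the nontrivial contribution, I bound the number of ordered integer pairs $(a,b)$ with $1<|a|<|b|\le H$ such that $|a|,|b|$ are distinct powers of a common integer base. Writing $|a|=z^p,|b|=z^q$ with $q>p\ge 1$ and $z\ge 2$ not a proper power, one sees at most $\sum_{q\ge 2}(q-1)H^{1/q}=O(H^{1/2})$ such pairs, dominated by $q=2$. For each fixed such pair, Theorem~\ref{thm:shifted-lattice-box-count} applied to the remaining $n-2$ coordinates (with at least $k-2\ge 2$ nonzero coefficients) gives $O(H^{n-3})$ extensions, producing the desired $O(H^{n-5/2})$ total. The remaining bookkeeping---overcounting vectors with several multiplicatively dependent pairs, and subtracting rank-zero vectors that slip into the trivial sum via some coordinate being $\pm 1$---reduces to counting vectors satisfying two simultaneous equality constraints plus the hyperplane, contributing $O(H^{n-3})$ and hence absorbed into the error. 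The principal obstacle is tracking the $\gcd$-divisibility indicators $\delta_{\balpha^{\pm}_{i_1,i_2}}(J)$ consistently through the volume computations so that~\eqref{eqn:C0C1} emerges exactly, and ensuring the error estimates remain uniform in $J$.
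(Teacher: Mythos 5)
Your overall route coincides with the paper's: the main term comes from pairs with $\nu_{i_2}=\pm\nu_{i_1}$, counted via Theorem~\ref{thm:shifted-lattice-box-count} and Corollary~\ref{cor:Valpha-BJ-approx} applied to $\balpha^{\pm}_{i_1,i_2}$ in dimension $n-1$, the pairs with $|\nu_{i_1}|\neq|\nu_{i_2}|$ contribute $O(H^{1/2})\cdot O(H^{n-3})=O(H^{n-5/2})$, and the rank-zero contamination is $O(H^{n-3})$ because the reduced linear form retains at least $k-2\geq 2$ nonzero coefficients. The error terms you state for $k=4$ and $k\geq 5$ also match the paper's.

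The gap is in your final bookkeeping sentence. Vectors admitting two distinct multiplicatively dependent pairs do not all reduce to ``two simultaneous equality constraints plus the hyperplane'': one or both witnessing pairs may be nontrivial power relations (these must instead be routed through the $O(H^{n-5/2})$ nontrivial bound, which you do not say), and even for two genuine equalities the claimed $O(H^{n-3})$ is false in general. Concretely, if $k=4$ with the support of $\balpha$ equal to the four indices of the two pairs and the sign choices make the restricted form vanish (e.g.\ $\alpha_{i_1}+\alpha_{i_2}=\alpha_{i_3}+\alpha_{i_4}=0$ with $\nu_{i_2}=-\nu_{i_1}$, $\nu_{i_4}=-\nu_{i_3}$), the hyperplane condition degenerates to $0=J$, and for $J=0$ there are $\gg H^{n-2}$ such doubly-counted vectors, which would destroy the asymptotic. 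It is exactly the hypothesis $J\neq 0$ (imposed only when $k=4$) that eliminates this case, and your proposal never invokes it; this is the step on which the paper spends the bound~\eqref{eqn:twodistinct}, parametrising the four special coordinates as signed powers of two integers $A,B$ and using that the hyperplane equation, viewed as a polynomial identity in $A,B$, is either nonconstant (hence $O(H\log H)$ choices of $(A,B)$) or constant equal to $0\neq J$ (hence no solutions). Once you supply this argument, or at least the observation that the degenerate two-equality configuration forces $0=J$ and is excluded by hypothesis, the rest of your proof goes through as in the paper.
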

Combining Propositions~\ref{prp:Sn0},~\ref{prp:Sn1} and either Corollary~\ref{cor:mainterm} (for $k\ge 5$) or Proposition~\ref{prp:k=2,3,4} (for $k=4$) completes the proof of Theorem~\ref{thm:main-k>=3} for $k\ge 4$, where we define 
\begin{equation}\label{eqn:Calpha}
    C_{\balpha;J} \coloneqq C^{(0)}_{\balpha;J} + C^{(1)}_{\balpha;J}
\end{equation} and the constants in~\eqref{eqn:C0C1}.
The remainder of this section is dedicated to providing the proofs of these propositions.
\begin{proof}[Proof of Proposition~\ref{prp:Sn0}]
To count $S_{n,0}(H,J;\balpha)$ for $n \ge 3$, we only need to count the number of vectors that has $\pm1$ as one of its coordinates and lies on the hyperplane $\balpha\cdot \bnu=J$.
The number of vectors $\bnu \in \cS_{n,0}(H,J;\balpha)$ with at least two coordinates equal to $\pm 1$ is
\begin{equation}\label{eqn:Sn0-twodistinct}
    O(H^{n - 3})
\end{equation}
due to the hyperplane equation, since $k \geq 3$.

Hence, we now count the number of vectors  $\bnu \in\cS_{n,0}(H,J;\balpha)$   such that $\nu_{i}=\pm 1$ for exactly one index $i \le n$. For a fixed $i$,  the number of vectors $\bnu$ such that $\nu_{i} = \pm1$ and $\balpha \cdot \bnu = J$ is 
\begin{align}\label{eqn:alpha-star}\begin{split}
    &\#\left\{\bnu \in (\Z \cap [-H, H])^{n - 1} : \balpha^*_{i} \cdot \bnu = J - \alpha_{i}\right\} \\
    &\qquad+\#\left\{\bnu \in (\Z \cap [-H, H])^{n - 1} : \balpha^*_{i} \cdot \bnu = J + \alpha_{i}\right\} 
\end{split}
\end{align}
When these quantities are nonzero,   we may use Theorem~\ref{thm:shifted-lattice-box-count} and Corollary~\ref{cor:Valpha-BJ-approx} to conclude that~\eqref{eqn:alpha-star} is equal to 
\begin{equation*}\begin{split}
     &V_{\balpha_i^*}([-H,H]^{n-1};J-\alpha_i)+  V_{\balpha_i^*}([-H,H]^{n-1};J+\alpha_i)+O(H^{n-3})\\
     &\quad= 2^{n - 1}V_{\balpha_i^*}([-1/2,1/2]^{n-1};0) H^{n-2}+\begin{cases}
        O(H^{n-3}+|J|H^{n-3}) & \quad \text{if} \quad k \le 4, \\
        O(H^{n-3} +|J|^2 H^{n - 4}) & \quad \text{if} \quad k \geq 5.
\end{cases}
\end{split}
\end{equation*}
When at least one of the terms of~\eqref{eqn:alpha-star} is zero, we adjust accordingly. Then, we add~\eqref{eqn:alpha-star} over all possible $i$ and combine the result with~\eqref{eqn:Sn0-twodistinct} to complete the proof of Proposition~\ref{prp:Sn0}.
\end{proof}

\begin{proof}[Proof of Proposition~\ref{prp:Sn1}] Let $\bnu \in \cS_{n,1}(H,J;\balpha)$. Then, there exist a pair of coordinates $(\nu_{i_1},\nu_{i_2})$ and nonzero integers $k_1, k_2$ such that \begin{align}\label{eqn:rank1}
    \nu_{i_1}^{k_1}\nu_{i_2}^{k_2}=1.
\end{align} 

We first bound the number of vectors $\bnu$ associated with two distinct such pairs $(i_1,i_2)$ and $(i_3,i_4)$, by showing that there are at most
\begin{equation}\label{eqn:twodistinct}
    O(H^{n-3}(\log H)^6).
\end{equation}
such vectors $\bnu$. This vector would satisfy
\begin{equation}\label{eqn:rank1-intersect}
    \nu_{i_1}^{k_1} = \nu_{i_2}^{k_2} \quad \text{and} \quad \nu_{i_3}^{k_3} = \nu_{i_4}^{k_4}
\end{equation}
for some positive integers $k_1, k_2, k_3, k_4 \ll \log H$, by Lemma~\ref{lem:exp}. We now fix these variables and show that the number of possible choices for $\bnu$ satisfying~\eqref{eqn:rank1-intersect} is $O(H^{n - 3} (\log H)^2)$, baseed on the structure of $\balpha$. 

First, suppose there exists an index $i \notin \{i_1, i_2, i_3, i_4\}$ with $\alpha_i = 0$. Suppose first that $i_1, i_2, i_3, i_4$ are pairwise distinct. Then $\balpha \cdot \bnu = J$ has $O(H^{n - 5})$ solutions after fixing $\nu_{i_1}, \nu_{i_2}, \nu_{i_3}, \nu_{i_4}$. For $k_1, k_2, k_3, k_4$ fixed,~\eqref{eqn:rank1-intersect} has $O(H^2)$ solutions, thus giving us $O(H^{n - 3})$ vectors satisfying~\eqref{eqn:rank1-intersect}. On the other hand, in the case where $i_{j_1} = i_{j_2}$ for some $j_1 \in \{1, 2\}$ and $j_2 \in \{3, 4\}$, the same argument holds, with $O(H^{n - 4})$ solutions to $\balpha \cdot \bnu = J$ and $O(H)$ solutions to~\eqref{eqn:rank1-intersect}. In particular,~\eqref{eqn:twodistinct} holds whenever $k \geq 5$.

Now suppose that $\alpha_i = 0$ for all $i \notin \{i_1, i_2, i_3, i_4\}$. In particular, $k \leq 4$, and we assume $J \neq 0$. First consider the case where $i_1, i_2, i_3, i_4$ are pairwise distinct. From~\eqref{eqn:rank1-intersect}, we have
\[ (\nu_{i_1}, \nu_{i_2}, \nu_{i_3}, \nu_{i_4}) = (\pm A^{k_2/a}, \pm A^{k_1/a}, \pm B^{k_4/b}, \pm B^{k_3/b}) \]
for some integers $A$ and $B$, where $a = \gcd(k_1, k_2)$ and $b = \gcd(k_3, k_4)$. The equation
\[ \pm \alpha_{i_1} A^{k_2/a} \pm \alpha_{i_2} A^{k_1/a} \pm \alpha_{i_3} B^{k_4/b} \pm \alpha_{i_4} B^{k_3/b} = J \]
has $O(H \max\{k_1, k_2, k_3, k_4\}) = O(H \log H)$ solutions in terms of the pairs $(A, B)$ for any fixed choices of the signs, except possibly when the left hand side is constant as a polynomial in $A$ and $B$. However, in that case the constant is $0 \neq J$, so the equation has no solutions. Together with $O(H^{n - 4})$ choices for the other coordinates, we get $O(H^{n - 3} \log H)$ vectors $\bnu$ satisfying~\eqref{eqn:rank1-intersect}.

Now consider the case where some of the indices $i_1, i_2, i_3, i_4$ are equal. Without loss of generality, we assume $i_1 = i_4$. Then 
\[ (\nu_{i_1}, \nu_{i_2}, \nu_{i_3}) = (\pm A^{k_2 k_3/c}, \pm A^{k_1 k_3/c}, \pm A^{k_2 k_4/c}) \]
for some integer $A$, where $c = \gcd(k_2 k_3, k_1 k_3, k_1 k_4)$. With similar argument as in the previous case, we may show that
\[ \pm \alpha_{i_1} A^{k_2 k_3/c} \pm \alpha_{i_2} A^{k_1 k_3/c} \pm \alpha_{i_3} A^{k_2 k_4/c} = J \]
has $O(\max\{k_1, k_2, k_3, k_4\}^2) = O((\log H)^2)$ solutions in terms of $A$ for any fixed choices of the signs. Together with $O(H^{n - 3})$ choices for the other coordinates, Thus we get $O(H^{n - 3} (\log H)^2)$ vectors $\bnu$ satisfying~\eqref{eqn:rank1-intersect}. This proves the bound~\eqref{eqn:twodistinct}.

Therefore, we may assume $\bnu$ has exactly one pair of indices satistfying~\eqref{eqn:rank1}. We first consider the case $(k_{i_1},k_{i_2})=(t,\pm t)$, which implies
\[ \nu_{i_1} = \pm \nu_{i_2}. \]

First, suppose that $\nu_{i_1} = -\nu_{i_2}$. Note that since we count over $[-H,H]^{n-1}$, we may assume $i_1<i_2$.
In this case, we count the number of solutions to
\begin{equation}\label{eqn:linear-}
    \alpha_1 \nu_1 +\dots + (\alpha_{i_1}-\alpha_{i_2})\nu_{i_1} +\dots + \alpha_n\nu_n=J,
\end{equation}
with $|\nu_i|\le H$. Note that~\eqref{eqn:linear-} has at least $k-1$ and $k-2$ terms when $\alpha_{i_1}\ne \alpha_{i_2}$ and $\alpha_{i_1} = \alpha_{i_2}$, respectively. Since $k \geq 4$, there are $O(H^{n - 3})$ vectors satisfying~\eqref{eqn:linear-} with one of its coordinates equal to $\pm 1$.

We now count the number of integer solutions of~\eqref{eqn:linear-} with $|\nu_i|\le H$. When $\gcd(\balpha_{i_1,i_2}^-)\mid J$, by using notations of~\eqref{eqn:alphai1i2}, Theorem~\ref{thm:shifted-lattice-box-count} and Corollary~\ref{cor:Valpha-BJ-approx}, this is exactly
\begin{align}\label{eqn:tpmt-int}\begin{split}
&  V_{\balpha^-_{i_1,i_2}} ([-H,H]^{n-1};J)+O(H^{n-3})\\
=& 2^{n-2} V_{\balpha^-_{i_1,i_2}}([-1/2,1/2]^{n-1};0)H^{n-2} + 
    \begin{cases}
        O(H^{n - 3} + |J| H^{n - 3}) & \quad \text{if} \quad k\le   4, \\
        O(H^{n - 3} + |J|^2 H^{n - 4}) & \quad \text{if} \quad k \ge 5.
    \end{cases}\end{split}
\end{align}

Next, suppose that $\nu_{i_1} = \nu_{i_2}$. Similar to the previous case, 
for fixed $(i_1, i_2)$,  we now count the number of solutions to
\begin{equation}\label{eqn:linear+}
    \alpha_1 \nu_1 +\dots + (\alpha_{i_1}+\alpha_{i_2})\nu_{i_1} +\dots + \alpha_n\nu_n=J,
\end{equation}
where we consider this as an equation of $n-1$ integer variables, all with $|\nu_i|\le H$. 
Applying similar arguments as in the previous case, if this equation has integer solutions, then the number of solutions is
\begin{equation}\label{eqn:tpmt-int2}
2^{n-2}V_{\balpha^+_{i_1,i_2}}([-1/2,1/2]^{n-1};0)H^{n-2} + 
    \begin{cases}
        O(H^{n - 3} + |J| H^{n - 3}) & \quad \text{if} \quad k\le 4, \\
        O(H^{n - 3} + |J|^2 H^{n - 4}) & \quad \text{if} \quad k \ge 5.
    \end{cases}
\end{equation}

Adding~\eqref{eqn:tpmt-int} and~\eqref{eqn:tpmt-int2} over all possible $i_1< i_2$, we obtain the number of such vectors in this case $(k_{i_1},k_{i_2})=(t,\pm t)$ is
\begin{equation}\label{eqn:LnK1integer}
    C^{(1)}_{\balpha;J}H^{n-2} 
+\begin{cases}
        O(H^{n - 3} + |J| H^{n - 3}) & \quad \text{if} \quad k\le 4, \\
        O(H^{n - 3} + |J|^2 H^{n - 4}) & \quad \text{if} \quad k \ge 5.
    \end{cases}
\end{equation}

We now consider the remaining cases of the exponents of~\eqref{eqn:rank1},  which is the case $(k_{i_1},k_{i_2}) \ne (t,\pm t)$.
Without loss of generality and using Lemma~\ref{lem:exp}, 
we assume \begin{align*}
    \log H \gg k_1 > 0 > k_2, \quad |k_1|>|k_2|, \quad \nu_{i_1},\nu_{i_2}>0 \quad \text{and} \quad \gcd(k_1,k_2)=1.
\end{align*}
Denote $(t_1,t_2)\coloneqq (k_1,-k_2)$. Therefore,~\eqref{eqn:rank1} is equivalent to \begin{equation*}
   \nu_{i_1}^{t_1}=\nu_{i_2}^{t_2}. 
\end{equation*}
We have that $\nu_{i_2}$ is a $t_1$-th perfect power, and $\nu_{i_1}$ is uniquely determined. Therefore, the number of choices of $(\nu_{i_1}, \nu_{i_2})$ is
\[ \sum_{t_2 < t_1 \ll \log H} O(H^{1/t_1}) = O(H^{1/2}). \]

Counting the other coordinates of $\bnu$ with similar methods as in the other case, we have that the number of vectors $\bnu$ is at most 
\begin{equation}\label{eqn:rank1-wildcases}
   O(H^{n-3}) O(H^{1/2})=O(H^{n-5/2}).
\end{equation} when $k=1$ or $k\ge 4$. Adding~\eqref{eqn:LnK1integer},~\eqref{eqn:rank1-wildcases} and~\eqref{eqn:twodistinct} completes our proof. \end{proof}

\subsection{The case \texorpdfstring{$k=3$}{k=3}}\label{sec:k=3-main} In this section, we compute the main term of $S_{n}(H,J;\balpha)$ when $k=3$ and $J\ne 0$. Recalling Proposition~\ref{prp:k=2,3,4}, we have \[
S_{n}(H,J;\balpha) = S_{n,0}(H,J;\balpha)+S_{n,1}(H,J;\balpha)+C^{(2);k=3}_{\balpha;J}H^{n-2}+O(H^{n-5/2}(\log H)^{16}),
\]  where $C^{(2);k=3}_{\balpha;J}$ is defined in~\eqref{eqn:k=3,r=2,main}.
 By directly applying Proposition~\ref{prp:Sn0}, we obtain $S_{n,0}(H,J;\balpha)$.  Therefore, it remains to compute $S_{n,1}(H,J;\balpha)$. We employ an argument analogous to that in the proof of Proposition~\ref{prp:Sn1}, however we add a correcting factor for some specific choices of $\balpha$ and $J$ which we define shortly.

Let $\alpha_{i_1}$, $\alpha_{i_2}$, and $\alpha_{i_3}$ be the nonzero coordinates of $\balpha$. Define the quantity
\[ C^{(1);k=3}_{\balpha;J} = C^{(1)}_{\balpha;J} - 2^{n - 2} \#X, \]
where $X \subseteq \{i_1, i_2, i_3\}$ is the set of indices $j$ such that $|\alpha_j| = |J|$ and $|\alpha_{j'}| = |\alpha_{j''}|$, where $j', j''$ are the indices in $\{i_1, i_2, i_3\}$ other than $j$. For example, when $(\alpha_{i_1},\alpha_{i_2},\alpha_{i_3})=(\pm1,\pm1,\pm J)$ with $J\ne 1$, $\#X=1$. However, when $J=1$, $\#X=3$. Note that $\#X \in \{0, 1, 3\}$, with $\#X = 3$ if and only if all the nonzero coordinates of $\balpha$ are $\pm J$.

We are now ready to state our result for $k=3$.

\begin{proposition}\label{prp:Sn1-k=3}
Let $H$ and $J \neq 0$ be integers, and let $\balpha \in \Z^n$ be a vector with three nonzero coordinates. Then,
\[ S_{n,1}(H,J;\balpha) = C^{(1);k=3}_{\balpha;J} H^{n - 2} + O(H^{n-5/2}). \] In particular, as $H,|J|\to \infty$, \[
S_{n,1}(H,J;\balpha)= C^{(1)}_{\balpha;J} H^{n - 2} + O(H^{n-5/2}).
\]
\end{proposition}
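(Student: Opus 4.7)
The strategy is to imitate the proof of Proposition~\ref{prp:Sn1}, making adjustments to correct an overcount that newly appears at the main-term scale when $k=3$. Following that proof, any $\bnu \in \cS_{n,1}(H,J;\balpha)$ satisfies $\nu_{i_1}^{k_1}\nu_{i_2}^{k_2}=1$ for some pair of distinct indices $(i_1,i_2)$ with $|k_1|,|k_2| \ll \log H$. The cases $|k_1| \ne |k_2|$ contribute $O(H^{n-5/2})$ as in~\eqref{eqn:rank1-wildcases}, and the subcase where $\bnu$ admits two distinct such relations contributes $O(H^{n-3}(\log H)^6)$ by the argument producing~\eqref{eqn:twodistinct} (which does not require $k \ge 4$). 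Thus the main contribution comes from pairs $(k_1,k_2)=(t,\pm t)$, which reduce to $\nu_{i_1} = \pm\nu_{i_2}$.

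For each ordered pair $i_1<i_2$, substituting $\nu_{i_1}=-\nu_{i_2}$ (resp.\ $\nu_{i_1}=\nu_{i_2}$) into the hyperplane equation produces a linear equation in $n-1$ integer variables, governed by $\balpha^-_{i_1,i_2}$ (resp.\ $\balpha^+_{i_1,i_2}$), as in~\eqref{eqn:linear-} and~\eqref{eqn:linear+}. Applying Theorem~\ref{thm:shifted-lattice-box-count} together with Corollary~\ref{cor:Valpha-BJ-approx}, exactly as in the derivation of~\eqref{eqn:tpmt-int} and~\eqref{eqn:tpmt-int2}, yields the expected main contributions $2^{n-2}V_{\balpha^-_{i_1,i_2}}([-1/2,1/2]^{n-1};0)H^{n-2}$ and $2^{n-2}V_{\balpha^+_{i_1,i_2}}([-1/2,1/2]^{n-1};0)H^{n-2}$. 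Summing over pairs and signs recovers the formal coefficient $C^{(1)}_{\balpha;J}$.

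The novelty for $k=3$ is that the above lattice-point count includes vectors whose true multiplicative rank is $0$, and these spurious contributions can reach the scale $H^{n-2}$. Let $\{i_1,i_2,i_3\}$ index the nonzero coordinates of $\balpha$, and consider a pair $\{j',j''\}\subset\{i_1,i_2,i_3\}$ together with a sign such that $\alpha_{j'}\pm\alpha_{j''}=0$; this requires $|\alpha_{j'}|=|\alpha_{j''}|$. With this choice the linear equation collapses to $\alpha_j\nu_j = J$ for the remaining index $j$, forcing $\nu_j = J/\alpha_j$. When additionally $|\alpha_j|=|J|$, i.e.\ $j \in X$, this gives $\nu_j = \pm 1$, so $\bnu \in \cS_{n,0}$ rather than $\cS_{n,1}$. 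These impostor vectors are parametrised by a free $\nu_{j'}\in[-H,H]\setminus\{0\}$ together with $n-3$ free coordinates, contributing exactly $(2H)(2H)^{n-3}=2^{n-2}H^{n-2}$ vectors per $j\in X$. Removing them converts the coefficient $C^{(1)}_{\balpha;J}$ into $C^{(1);k=3}_{\balpha;J}$.

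All remaining ``impostor'' scenarios --- where some $\nu_i = \pm 1$ but the coefficient $\alpha_{j'}\pm\alpha_{j''}$ in the collapsed equation is nonzero, or where the forced value of $\nu_j$ is not $\pm 1$ --- impose at least one additional nondegenerate linear constraint beyond $\nu_{i_1}=\pm\nu_{i_2}$, and so contribute only $O(H^{n-3})$ vectors, which is absorbed into the error. The main obstacle is carrying out an exhaustive case analysis to verify that the set $X$ captures precisely the overcounting configurations that survive at the $H^{n-2}$ scale; the stated formula $S_{n,1}(H,J;\balpha) = C^{(1);k=3}_{\balpha;J}H^{n-2} + O(H^{n-5/2})$ follows, and the final limiting statement is immediate since $X=\varnothing$ whenever $|J|>\max_j|\alpha_j|$.
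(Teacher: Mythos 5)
Your proposal is correct and follows essentially the same route as the paper: reduce to the $\nu_{i_1}=\pm\nu_{i_2}$ case via the arguments of Proposition~\ref{prp:Sn1} (the two-relation and $|k_1|\neq|k_2|$ error bounds indeed persist for $k=3$ with $J\neq 0$), count solutions of~\eqref{eqn:linear-} and~\eqref{eqn:linear+} by Theorem~\ref{thm:shifted-lattice-box-count}, and subtract the $2^{n-2}H^{n-2}$ rank-zero contribution arising exactly when the collapsed equation forces $\nu_j=J/\alpha_j=\pm1$, i.e.\ when $j\in X$. Your identification of $X$ and of the degenerate configurations matches the paper's definition and argument, including the observation that $X=\varnothing$ once $|J|>\max_j|\alpha_j|$.
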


As a result, by combining Proposition~\ref{prp:Sn0}, Proposition~\ref{prp:Sn1-k=3} and Proposition~\ref{prp:k=2,3,4}, we obtain Theorem~\ref{thm:main-k>=3} for $k = 3$, with
\begin{equation}\label{eqn:Calpha-k=3}
C_{\balpha;J} := C^{(0)}_{\balpha;J} + C^{(1);k=3}_{\balpha;J} + C^{(2);k=3}_{\balpha;J}.
\end{equation}
   
We now prove Proposition~\ref{prp:Sn1-k=3}. Let the related plane equation be \[
\alpha_{j_1}\nu_{j_1}+\alpha_{j_2}\nu_{j_2}+\alpha_{j_3}\nu_{j_3}=J.\]

With similar arguments as in the proof of Proposition~\ref{prp:Sn1}, we only need to consider the number of pairs $(\nu_{i_1},\nu_{i_2})$ of rank $1$. Note that the arguments that gives the error bound $O(H^{n-3}(\log H)^4)$ and $O(H^{n-5/2})$ from~\eqref{eqn:rank1-intersect} and~\eqref{eqn:rank1-wildcases}, respectively, still holds in this case. Therefore, we only need to count solutions to~\eqref{eqn:linear-} and~\eqref{eqn:linear+} across all choices of indices $i_1 < i_2$. Note that when $\{i_1,i_2\} \not\subseteq \{j_1,j_2,j_3\}$, we may follow a similar argument as in the referred equations. 

However, a different scenario happens when, without loss of generality, $i_1=j_1$ and $i_2=j_2$. If $\alpha_{j_3} \ne \pm J$, then we may proceed with the arguments in~\eqref{eqn:linear-} and~\eqref{eqn:linear+} to obtain Proposition~\ref{prp:Sn1-k=3}, with $\#X=0$. However, when $\alpha_{j_3}=\pm J$, then $\nu_{j_3}=\pm 1$ and the corresponding vector $\bnu$ would have rank zero. Therefore, we need to exclude these family of vectors from the counting of $S_{n,1}(H,J;\balpha)$.

We now compute the number of such vectors. Suppose that $\alpha_{i_1} = \alpha_{i_2} \neq 0$, and $\alpha_{i_3} = \pm J$. By applying Theorem~\ref{thm:shifted-lattice-box-count}, the number of vectors satisfying~\eqref{eqn:linear-} is
\[ 2^{n - 2} V_{J\bfe_1}([-1/2, 1/2]^{n-1};J) H^{n - 2} + O(H^{n-3}) = 2^{n-2} H^{n-2} + O(H^{n-3}). \]
The same argument also holds for the case $\alpha_{i_1}=-\alpha_{i_2}$.

As a result, by considering the other permutations of $(i_1,i_2)$, we obtain
\[ S_{n,1}(H,J;\balpha) = (C^{(1)}_{\balpha;J} - 2^{n-2} \#X) H^{n-2} + O(H^{n-5/2}), \]
where $X$ is defined before Proposition~\ref{prp:Sn1-k=3}. This completes the proof of Proposition~\ref{prp:Sn1-k=3}.

\subsection{The case \texorpdfstring{$k=2$}{k=2}}\label{sec:k=2-main}

In this case, we consider the hyperplane \begin{equation}\label{eqn:multdep-k=2}
\alpha_{j_1}\nu_{j_1}+\alpha_{j_2}\nu_{j_2}=J
\end{equation}
with $J\ne 0$. Also, define $\cS'_2(J;\alpha_{j_1},\alpha_{j_2})$ as the set of multiplicatively dependent integer solutions of~\eqref{eqn:multdep-k=2} with $|\nu_{j_1}|,|\nu_{j_2}| > 1$ and $|\nu_{j_1}| \ne |\nu_{j_2}|$. Then, let
\[ S'_2(J;\alpha_{j_1},\alpha_{j_2})=\# \cS'_2(J;\alpha_{j_1},\alpha_{j_2}). \]
Note that $S'_2(J;\alpha_{j_1},\alpha_{j_2})$ is well-defined from Lemma~\ref{lem:extra-count-k=2}. Also, define the quantities
\[ C^{(0);k=2}_{\balpha; J} = \begin{cases} 
     C^{(0)}_{\balpha; J} - 2^{n - 2} & \quad \text{if} \quad J \in \{\pm(\alpha_{j_1} + \alpha_{j_2}), \pm(\alpha_{j_1} - \alpha_{j_2})\},\\
    C^{(0)}_{\balpha; J} & \quad \text{otherwise},
    \end{cases} \]
    and
\[ C^{(1);k=2}_{\balpha; J} = \begin{cases} 
    C^{(1)}_{\balpha; J} + 2^{n-2} S'_2(J;\alpha_{j_1},\alpha_{j_2}) - 2^{n-2} & \quad \text{if} \quad J \in \{\pm(\alpha_{j_1} + \alpha_{j_2}), \pm(\alpha_{j_1} - \alpha_{j_2})\}, \\
    C^{(1)}_{\balpha; J} + 2^{n-2} S'_2(J;\alpha_{j_1},\alpha_{j_2}) & \quad \text{otherwise.}
\end{cases} \]
We are now able to state our proposition for $k=2$.

\begin{proposition}\label{prp:Sn0,1-k=2}
Let $H$ and $J \neq 0$ be integers, and $\balpha \in \Z^n$ be a vector with two nonzero coordinates, Then, as $H \to \infty$,
\begin{align*} &S_{n,0}(H,J;\balpha) = C^{(0);k=2}_{\balpha; J} H^{n - 2} + O(H^{n - 3}), \\&S_{n,1}(H,J;\balpha) = C^{(1);k=2}_{\balpha; J} H^{n - 2} +  O(H^{n - 5/2}). 
\end{align*}
In particular, as $|J|,H\to \infty$, we have \begin{align*} &S_{n,0}(H,J;\balpha) = C^{(0)}_{\balpha; J} H^{n - 2} + O(H^{n - 3}), \\&S_{n,1}(H,J;\balpha) = (C^{(1)}_{\balpha; J}+2^{n-2} S'_2(J;\alpha_{j_1},\alpha_{j_2}))H^{n - 2} +  O(H^{n - 5/2}). 
\end{align*}
\end{proposition}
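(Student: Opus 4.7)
The plan is to adapt the proofs of Proposition~\ref{prp:Sn0} and Proposition~\ref{prp:Sn1} to the case $k=2$, tracking additional main-order contributions and corrections that arise when the multiplicative relations interact with the two-variable hyperplane $\alpha_{j_1}\nu_{j_1}+\alpha_{j_2}\nu_{j_2}=J$.

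For $S_{n,0}(H,J;\balpha)$, I would follow the proof of Proposition~\ref{prp:Sn0} and classify $\bnu$ by the index $i$ with $\nu_i=\pm 1$. Summing $V_{\balpha^*_i}([-H,H]^{n-1};J\mp\alpha_i)+O(H^{n-3})$ over $i$ yields the main term $C^{(0)}_{\balpha;J}H^{n-2}$. The genuinely new issue for $k=2$ is that the ``at least two coordinates $=\pm1$'' correction is no longer absorbed into $O(H^{n-3})$. A short case check shows that any vector with two indices $i_1<i_2$ such that $|\nu_{i_1}|=|\nu_{i_2}|=1$ still contributes $O(H^{n-3})$ unless $\{i_1,i_2\}=\{j_1,j_2\}$, in which case the other $n-2$ coordinates are completely free and the contribution is $(2H)^{n-2}=2^{n-2}H^{n-2}+O(H^{n-3})$. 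Such a vector actually exists iff there is a pair $(\epsilon_1,\epsilon_2)\in\{-1,1\}^2$ with $\epsilon_1\alpha_{j_1}+\epsilon_2\alpha_{j_2}=J$, i.e.\ iff $J\in\{\pm(\alpha_{j_1}\pm\alpha_{j_2})\}$, and (since $J\ne 0$ and $\alpha_{j_1},\alpha_{j_2}\ne 0$) there is exactly one such pair. Subtracting this double count yields $C^{(0);k=2}_{\balpha;J}H^{n-2}+O(H^{n-3})$.

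For $S_{n,1}(H,J;\balpha)$, I would repeat the case analysis of Proposition~\ref{prp:Sn1} according to the exponent pattern $(k_1,k_2)$ in the minimal relation $\nu_{i_1}^{k_1}\nu_{i_2}^{k_2}=1$. The ``irregular'' case $(k_1,k_2)\ne(t,\pm t)$ still produces only $O(H^{1/2})$ pairs $(\nu_{i_1},\nu_{i_2})$ whenever $\{i_1,i_2\}\ne\{j_1,j_2\}$, because one of $\nu_{j_1},\nu_{j_2}$ remains free in the hyperplane equation; this contributes the same $O(H^{n-5/2})$ as before. When $\{i_1,i_2\}=\{j_1,j_2\}$ the pair is constrained by both the multiplicative relation and the hyperplane, so Lemma~\ref{lem:extra-count-k=2} guarantees only finitely many such pairs $(\nu_{j_1},\nu_{j_2})$; restricting to $|\nu_{j_1}|,|\nu_{j_2}|>1$ and $|\nu_{j_1}|\ne|\nu_{j_2}|$ isolates exactly the rank-one pairs not already covered by the ``regular'' case, so these give $2^{n-2}S'_2(J;\alpha_{j_1},\alpha_{j_2})H^{n-2}+O(H^{n-3})$ vectors.

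For the regular case $\nu_{i_1}=\pm\nu_{i_2}$ I would reduce, exactly as in the proof of Proposition~\ref{prp:Sn1}, to the $(n-1)$-variable hyperplane with coefficient vector $\balpha^{\pm}_{i_1,i_2}$ and apply Theorem~\ref{thm:shifted-lattice-box-count} together with Corollary~\ref{cor:Valpha-BJ-approx}. Summation over $(i_1,i_2)$ reproduces $C^{(1)}_{\balpha;J}H^{n-2}$. The only $k=2$-specific subtlety is the pair $(i_1,i_2)=(j_1,j_2)$: the unique integer solution $\nu_{j_1}=J/(\alpha_{j_1}\pm\alpha_{j_2})$ satisfies $|\nu_{j_1}|=1$ precisely when $J\in\{\pm(\alpha_{j_1}\pm\alpha_{j_2})\}$, in which case the whole vector has rank $0$, not $1$, and must be removed; this accounts for the $-2^{n-2}$ correction in $C^{(1);k=2}_{\balpha;J}$. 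Finally, I would verify that vectors accidentally counted in two different pairs $(i_1,i_2)$, or of rank $\ge 2$ (using Lemma~\ref{lem:SnIupper} and Proposition~\ref{prp:k=2,3,4}), contribute only $O(H^{n-5/2})$. The main obstacle is this bookkeeping: confirming that the ``irregular'' exponent contribution concentrates exactly on the pair $\{j_1,j_2\}$ at main order, and that the single-point rank-demotion corrections produce precisely $-2^{n-2}$ for each of $C^{(0);k=2}$ and $C^{(1);k=2}$, with no further cancellation between the two.
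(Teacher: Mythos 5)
Your proposal is correct and follows essentially the same route as the paper: it adapts the proofs of Propositions~\ref{prp:Sn0} and~\ref{prp:Sn1}, isolates the pair $\{j_1,j_2\}$ as the only source of main-order deviations, and identifies exactly the paper's three corrections — the $-2^{n-2}$ double-count in the rank-$0$ count and the $-2^{n-2}$ rank-demotion in the regular rank-$1$ count when $J\in\{\pm(\alpha_{j_1}+\alpha_{j_2}),\pm(\alpha_{j_1}-\alpha_{j_2})\}$, plus the $+2^{n-2}S'_2(J;\alpha_{j_1},\alpha_{j_2})$ term from the finitely many irregular pairs supplied by Lemma~\ref{lem:extra-count-k=2}. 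The remaining bookkeeping (two distinct dependent pairs, rank $\ge 2$, and the off-$\{j_1,j_2\}$ cases) is handled exactly as in the paper, so no gap remains.
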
 

Note that our proof of Theorem~\ref{thm:main-k=2} is complete by combining Propositions~\ref{prp:Sn0,1-k=2} and~\ref{prp:k=2,3,4}. The rest of this section are the arguments for the proof of Proposition~\ref{prp:Sn0,1-k=2}. In addition, we also note that finding a result for a fixed $J$ and $H$ is harder in this case, due to reasons outlined in Section~\ref{sec:strength}.

We start by proving Proposition~\ref{prp:Sn0,1-k=2} for $r=0$. We note that the exact argument from  the proof of Proposition~\ref{prp:Sn0} works when $J \notin \{\pm(\alpha_{j_1} + \alpha_{j_2}), \pm(\alpha_{j_1} - \alpha_{j_2})\}$. Otherwise, we may write
\begin{equation}\label{eqn:Jpm1} J = \epsilon_1 \alpha_{j_1} + \epsilon_2 \alpha_{j_2} \end{equation}
for some $\epsilon_1, \epsilon_2 \in \{-1, 1\}$; the pair $(\epsilon_1, \epsilon_2)$ is unique since $J, \alpha_{j_1}, \alpha_{j_2} \neq 0$.

We may follow the proof of Proposition~\ref{prp:Sn0} in this case, except when counting the number of vectors $\bnu\in  \cS_{n,0}(H,J;\balpha)$ with at least two coordinates equal to $\pm 1$. In particular, note that there are $O(H^{n-3})$ such vector  $\bnu$  with $(\nu_{j_1}, \nu_{j_2}) \neq (\epsilon_1, \epsilon_2)$. However, note that when $(\nu_{j_1}, \nu_{j_2}) = (\epsilon_1, \epsilon_2)$, which is possible from~\eqref{eqn:Jpm1}, we obtain $(2H)^{n-2}+O(H^{n-3})$ such vectors. Therefore, the number of such vectors overall is
\[ (2H)^{n-2}+O(H^{n-3}). \]

Hence, when $J \in \{\pm(\alpha_{j_1} + \alpha_{j_2}), \pm(\alpha_{j_1} - \alpha_{j_2})\}$, we obtain
\[ S_{n,1}(H,J;\balpha) = (C^{(0)}_{\balpha; J} - 2^{n - 2}) H^{n - 2} + O(H^{n - 3}). \]
This completes the proof of Proposition~\ref{prp:Sn0,1-k=2} for $r=0$.

We now proceed to prove Proposition~\ref{prp:Sn0,1-k=2} for $r=1$. As in the proof of Proposition~\ref{prp:Sn1}, there $O(H^{n-3}(\log H)^6)$ vectors $\bnu$ such that the related rank equations hold for at least two distinct pairs of coordinates. We now proceed to count solutions of~\eqref{eqn:linear-} and~\eqref{eqn:linear+} in this case.

We first count the number of solutions of~\eqref{eqn:linear-} of rank zero. Note that when $\{i_1, i_2\} \neq \{j_1, j_2\}$, $\balpha_{i_1,i_2}^-$  has two nonzero coordinates. Therefore, following the proof of Proposition~\ref{prp:Sn1}, in this case the equation~\eqref{eqn:linear-} has $O(H^{n-3})$ solutions with one coordinate equal to $\pm 1$. Now, when $\{i_1, i_2\} = \{j_1, j_2\}$, there are also $O(H^{n-3})$ vectors of this kind when $|\alpha_{j_1} - \alpha_{j_2}| \ne |J|$. 

The remaining case is when $|\alpha_{j_1} - \alpha_{j_2}| = |J|$, where every vector $\bnu$ satisfying~\eqref{eqn:linear-} satisfies $\nu_{j_1} = \pm 1$. By choosing the other $n - 2$ coordinates arbitrarily, the number of such vector is $(2H)^{n-2}+O(H^{n-3})$. Similar arguments hold for~\eqref{eqn:linear+}, where instead we consider $|\alpha_{j_1} + \alpha_{j_2}| = |J|$.

Since $J, \alpha_{j_1}, \alpha_{j_2} \neq 0$, only one of the two equations $|\alpha_{j_1} - \alpha_{j_2}| = |J|$ and $|\alpha_{j_1} + \alpha_{j_2}| = |J|$ could hold at one time. Therefore, we only need to exclude exactly $(2H)^{n-2}+O(H^{n-3})$ such vector in the case one of this scenario happens. We then obtain \begin{equation}\label{eqn:Sn1-k=2-identical}
C^{(1)}_{\balpha;J}H^{n-2}+O(H^{n-3})-\begin{cases}
 (2H)^{n-2} &\quad \text{if} \quad J\in \{\pm(\alpha_{j_1} + \alpha_{j_2}), \pm(\alpha_{j_1} - \alpha_{j_2})\},\\
 0 &\quad \text{otherwise}.
\end{cases}
\end{equation}
vectors of rank $1$ with $|\nu_{j_1}| = |\nu_{j_2}|$ in this case.

It remains to deal with the case where $\nu_{i_1}^{t_1} = \nu_{i_2}^{t_2}$ for some positive integers $t_1 \neq t_2$. As in the proof of Proposition~\ref{prp:Sn1}, there are $O(H^{1/2})$ choices for the pair $(\nu_{i_1}, \nu_{i_2})$, and when $\{i_1, i_2\} \neq \{j_1, j_2\}$, they correspond to $O(H^{n-5/2})$ vectors $\bnu \in \cS_{n,1}(H,J;\balpha)$. 

However, when $\{i_1, i_2\} = \{j_1, j_2\}$, the only pairs $(\nu_{j_1}, \nu_{j_2})$ corresponding to vectors $\bnu \in \cS_{n,1}(H;J;\balpha)$ are those satisfying~\eqref{eqn:multdep-k=2} with $|\nu_{j_1}|,|\nu_{j_2}| > 1$ and $|\nu_{j_1}|\ne |\nu_{j_2}|$, i.e. $(\nu_{j_1}, \nu_{j_2}) \in \cS'_2(J;\alpha_{j_1},\alpha_{j_2})$. Each such pair corresponds to $(2H - 2)^{n - 2} = 2^{n - 2} H^{n - 2} + O(H^{n - 3})$ vectors $\bnu \in \cS_{n,1}(H;J;\balpha)$. Thus the number of $\bnu \in \cS_{n,1}(H;J;\balpha)$ such that $\nu_{i_1}^{t_1} = \nu_{i_2}^{t_2}$ for some positive integers $t_1 \neq t_2$ is
\begin{equation}\label{eqn:Sn1-k=2-wildcases} 2^{n - 2} S'_2(J;\alpha_{j_1},\alpha_{j_2}) + O(H^{n-3}). \end{equation}

Combining~\eqref{eqn:Sn1-k=2-identical} and~\eqref{eqn:Sn1-k=2-wildcases} completes the proof of Proposition~\ref{prp:Sn0,1-k=2} for $r=1$.

\subsection{The case \texorpdfstring{$k=1$}{k=1}}\label{sec:k=1-main}
As mentioned in Section~\ref{sec:k=1}, to count $S_{n,0}(H,J;\bfe_1)$ and $S_{n,1}(H,J;\bfe_1)$ in this case, we may assume $|J| > 1$. Note that $S_{n,0}(H,J;\bfe_1)$ is the number of vectors $\bnu\in \cS_{n}(H,J;\bfe_1)$ such that $\nu_1=J$ and at least one of the other coordinates of $\bnu$ is equal to $\pm 1$. Picking the coordinate location and applying the principles of inclusion and exclusion, we obtain \begin{align}\label{eqn:k=1,r=0}\begin{split}
S_{n,0}(H,J;\bfe_1)&= 2(n-1)(2H)^{n-2}+O(H^{n-3})\\&=2^{n-1}(n-1)H^{n-2}+O(H^{n-3}).
\end{split}\end{align}

We now compute $S_{n,1}(H,J;\bfe_1)$. As in Section~\ref{sec:k=1}, we may suppose $J$ is positive and not a perfect power. There are two possible scenarios for  a vector $\bnu \in \cS_{n,1}(H,J;\bfe_1)$; either \begin{itemize}
    \item $\nu_i=\pm J^k$ for some positive integer $k\le \log H/\log J$, or 
    \item There exists a pair of indices $(i_1, i_2)$ with $1 < i_1 < i_2$ and $k_1,k_2\ll \log H$ (from Lemma~\ref{lem:exp}) with \[
    \nu_{i_1}^{k_1} = \nu_{i_2}^{k_2}.
    \]
\end{itemize}
Note that trivially there are $O(H^{n-3} (\log H)^4)$ cases where these scenarios happened twice. Therefore, we may count each cases separately.

For the first case, there are $n-1$ possible choices for $i$, two possible sign choices and $\lfloor \log H/\log J\rfloor$ choices for $k$. Therefore,  we obtain \begin{align}\label{eqn:k=1-J}
&2(n-1) \left\lfloor \dfrac{\log H}{\log J}\right\rfloor (2H)^{n-2} + O(H^{n-3}) = 2^{n-1}(n-1) \left\lfloor \dfrac{\log H}{\log J}\right\rfloor H^{n-2} +O(H^{n-3})
\end{align} vectors $\bnu \in \cS_{n,1}(H,J;\bfe_1)$ in this case.

For the second case, when $|k_1|=|k_2|$ we obtain $|\nu_{i_1}|=|\nu_{i_2}|$. Without loss of generality, we may let $\nu_{i_1}=\nu_{i_2}=A$ for some integer $1<A\le H$. There are $H-1=H+O(1)$ choices of $A$ and $(n-1)(n-2)/2$ choices for $(i_1,i_2)$. Considering the four sign choices of $\nu_{i_1}$, $\nu_{i_2}$, we obtain \begin{equation}\label{eqn:k=1-eq}
2(n-1)(n-2)H(2H)^{n-3}+O(H^{n-3})=2^{n-2}(n-1)(n-2)H^{n-2}+O(H^{n-3}).
\end{equation} vectors $\bnu \in \cS_{n,1}(H,J;\bfe_1)$ in this case.

For the case $|k_1|\ne |k_2|$, we apply similar argument as in~\eqref{eqn:rank1-wildcases} to obtain at most \begin{equation}\label{eqn:k=1-wildcases}
O(H^{n-5/2})
\end{equation} vectors $\bnu \in \cS_{n,1}(H,J;\bfe_1)$ in this case.

Adding~\eqref{eqn:k=1-J},~\eqref{eqn:k=1-eq} and~\eqref{eqn:k=1-wildcases} together, we obtain 
\begin{equation}\label{eqn:k=1,r=1}
    S_{n,1}(H,J;\bfe_1)=2^{n-2}(n-1)\left(n-2+2\left \lfloor \dfrac{\log H}{\log J}\right\rfloor\right)H^{n-2} + O(H^{n-5/2}).
\end{equation}
Adding~\eqref{eqn:k=1,r=0} and~\eqref{eqn:k=1,r=1} to Proposition~\ref{prp:k=1}, we obtain 
\[ S_{n}(H,J;\bfe_1)= C_{\bfe_1;J}(H)H^{n-2}+O(H^{n-5/2}), \] with
\begin{align}\label{eqn:Cbfe1}\begin{split}
C_{\bfe_1;J}(H) &\coloneqq 2^{n-1}(n-1) +2^{n-2}(n-1)\left(n-2+2\left \lfloor \dfrac{\log H}{\log J}\right\rfloor\right)+\dfrac{2^{n-1}(n-1)(n-2)}{J-1}
\\&=2^{n-2}(n-1) \left( n+2\left \lfloor \dfrac{\log H}{\log J}\right\rfloor +  \dfrac{2(n-2)}{J-1}\right).
\end{split}\end{align}
This completes the proof of Theorem~\ref{thm:main-k=1}.

Finally, note that the above formula for $S_{n}(H,J;\bfe_1)$ holds when $J$ is positive and not a prime power. In general, as long as $|J| > 1$, we can replace the occurrences of $J$ in~\eqref{eqn:Cbfe1} with $f(|J|)$, the smallest positive integer $A$ such that $|J|$ is a power of $A$ (as defined in Theorem~\ref{thm:main-k=1}).

\subsection{Multiplicatively dependent positive integer vectors}\label{sec:positive}

In this section, we compute $S_n^+(H, J; \balpha)$ for all $\balpha$. First, we outline the differences of counting $S_{n,r}^+(H,J;\balpha)$ compared to  $S_{n,r}(H,J;\balpha)$, which stems from counting solutions of the related hyperplane equation $\balpha \cdot \bnu =J$. In particular, note that our main tools in Section~\ref{sec:count-vol} are worded differently in the case where we limit ourselves to positive integer vectors. Neverthless, most of our arguments in the previous section still hold in this setup.

First, for the case $k=1$, we note that by taking the absolute values of each of the coordinates, each vector $\cS_n^+(H,J;\bfe_1)$ corresponds to exactly $2^{n-1}$ vectors in $\cS_n(H,J;\bfe_1)$. We then use Theorem~\ref{thm:main-k=1} to complete our proof.

Next, we proceed to the case $k\ge 2$. For technical reasons, we only discuss the case where $\balpha$ has either at least two positive and two negative coordinates (which, in particular, implies $k \geq 4$) and the case where all coordinates of $\balpha$ are positive. 

Returning to our problem, we count the number of vectors in $\cS_n^+(H, J; \balpha)$ based on their multiplicative rank. We begin similarly as our arguments for $S_n(H,J;\balpha)$, by bounding the number of vectors based on its multiplicative rank. We denote $S_{n,r}^+(H,J;\balpha)$ as the number of vectors with multiplicative rank $r$. Since trivially
\[ S_{n,r}^+(H,J;\balpha) \leq S_{n,r}(H,J;\balpha), \]
the same upper bounds previously derived for $S_{n,r}(H,J;\balpha)$ also hold for  $S_{n,r}^+(H,J;\balpha)$. In the two cases that we are considering, we have $S_{n,r}(H,J;\balpha) = o(H^{n - 2})$ for $r \ge 2$, except when all entries of $\balpha$ are positive and $k = n = 3$. In this case, while $S_{3,2}(H,J;\balpha)$ contributes to the main term, all such contributions come from the vectors $\bnu$ satisfying~\eqref{eqn:alpha-k=3} with $\alpha_j \nu_j = J$ for some $j \in \{i_1, i_2, i_3\}$. However,~\eqref{eqn:alpha-k=3} implies that no such vectors $\bnu$ have all entries positive, since $\alpha_{i_1}, \alpha_{i_2}, \alpha_{i_3} > 0$. Removing these vectors, we obtain
\[ S_{3,2}^+(H,J;\balpha) = O(H^{n - 5/2} (\log H)^{9}). \]
Therefore, we only need to compute $S_{n, 0}^+(H, J; \balpha)$ and $S_{n, 1}^+(H, J; \balpha)$, which is done after we introduce the following quantities:
\begin{align*}
   &C^{+,(0)}_{\balpha;J} \coloneqq \sum_{i=1}^n \delta_{\balpha_i^*}(J-\alpha_i)V_{\balpha_i^*}([0,1]^{n-1};0),\\
   &C^{+,(1)}_{\balpha;J}\coloneqq \sum_{1\le i_1<i_2\le n} \delta_{\balpha^+_{i_1,i_2}}(J)V_{\balpha^+_{i_1,i_2}}([0,1]^{n-1};0),
\end{align*}
Note the difference between $C^{+,(r)}_{\balpha;J}$ and $C^{(r)}_{\balpha;J}$, which is due to the fact that we fix the sign of the coordinates of $\bnu$ here. Then we have the following proposition.

\begin{proposition}\label{prp:Snpos}
Assume that $\balpha$ has at least two positive and two negative coordinates. Then Propositions~\ref{prp:Sn0} and~\ref{prp:Sn1} remain true if we replace $S$ with $S^+$ and $C$ with $C^+$.
\end{proposition}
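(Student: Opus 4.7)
The plan is to adapt the proofs of Propositions~\ref{prp:Sn0} and~\ref{prp:Sn1} verbatim to the positive setting, replacing the symmetric box $[-H,H]^{n-1}$ by $[1,H]^{n-1}$ and replacing each invocation of Corollary~\ref{cor:Valpha-BJ-approx} on $[-H,H]^{n-1}$ by its $[0,H]^{n-1}$-counterpart. Three structural simplifications account for the passage from $C^{(r)}_{\balpha;J}$ to $C^{+,(r)}_{\balpha;J}$. First, restricting to $\bnu \in [1,H]^n$ fixes the sign of every coordinate, so the factor $2^{n-2}$ in $C^{(0)}_{\balpha;J}$ and $C^{(1)}_{\balpha;J}$ --- which counted the $\pm 1$ sign choices of the undistinguished coordinates in the signed problem --- disappears. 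Second, in the rank-zero count the event $\nu_i = -1$ cannot occur, leaving only $\delta_{\balpha^*_i}(J-\alpha_i) V_{\balpha^*_i}([0,1]^{n-1};0)$ from each index $i$. Third, in the rank-one count the relation $\nu_{i_1}^t \nu_{i_2}^{-t} = 1$ with $\nu_{i_1}, \nu_{i_2} > 0$ forces $\nu_{i_1} = \nu_{i_2}$ (never $-\nu_{i_2}$), so only $\balpha^+_{i_1,i_2}$ contributes and $\balpha^-_{i_1,i_2}$ drops out entirely.

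For the rank-zero count, I first bound by $O(H^{n-3})$ the contribution of vectors with two or more coordinates equal to $1$, using $k \geq 4$. Fixing $\nu_i = 1$ and applying Theorem~\ref{thm:shifted-lattice-box-count} to the shifted box $[1,H]^{n-1}$, together with the $[0,H]^{n-1}$-version of Corollary~\ref{cor:Valpha-BJ-approx}, yields $V_{\balpha^*_i}([0,1]^{n-1};0) H^{n-2}$ as the main contribution for each index $i$. Summing over $i$ produces $C^{+,(0)}_{\balpha;J} H^{n-2}$. The hypothesis of at least two positive and two negative coordinates of $\balpha$ guarantees that each $\balpha^*_i$ retains coordinates of both signs, which ensures that $\{\bnu^* \in [1,H]^{n-1} : \balpha^*_i \cdot \bnu^* = J - \alpha_i\}$ has full-order intersection with the positive box rather than a degenerately small one.

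For the rank-one count, the ancillary bounds of Proposition~\ref{prp:Sn1} --- namely $O(H^{n-3}(\log H)^6)$ for vectors admitting two distinct rank-one relations and $O(H^{n-5/2})$ for exponent pairs $(k_1,k_2) \neq (t,\pm t)$ --- transfer verbatim because $\cS_n^+ \subseteq \cS_n$. The dominant contribution comes from the case $\nu_{i_1} = \nu_{i_2}$; substituting into the hyperplane equation and applying Theorem~\ref{thm:shifted-lattice-box-count} and Corollary~\ref{cor:Valpha-BJ-approx} on $[1,H]^{n-1}$ to $\balpha^+_{i_1,i_2} \cdot \tilde{\bnu} = J$ produces $V_{\balpha^+_{i_1,i_2}}([0,1]^{n-1};0) H^{n-2}$, and summing over $i_1 < i_2$ yields $C^{+,(1)}_{\balpha;J} H^{n-2}$.

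The main technical obstacle is reproducing the improved error term $O(H^{n-3} + |J|^2 H^{n-4})$ claimed for $k \geq 5$ in Proposition~\ref{prp:Sn0}. In the signed setting this came from the $O(|r|^2)$ remainder in Corollary~\ref{cor:Marichal-Mossinghoff-approx}, which leveraged reflection symmetry of $[-1/2,1/2]^n$ about the origin --- a symmetry not directly present for $[0,1]^n$. The hypothesis that $\balpha$ has at least two positive and two negative coordinates is essential at precisely this point: applying the coordinate reflection $\nu_i \mapsto 1 - \nu_i$ at each index with $\alpha_i < 0$ relates the section-volume function on $[0,1]^n$ associated to $\balpha^*_i$ (respectively $\balpha^+_{i_1,i_2}$) to the corresponding function for its absolute-value vector at a translated argument, and by inspecting the explicit formula of Theorem~\ref{thm:Marichal-Mossinghoff} one can then extract an analogous $O(s^2)$-type bound sufficient for the computation, thereby recovering the claimed error term.
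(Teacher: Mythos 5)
Your main argument is the same as the paper's: restrict the rank-zero count to coordinates equal to $1$ (instead of $\pm 1$) and the rank-one count to $\nu_{i_1}=\nu_{i_2}$, import the auxiliary bounds of Propositions~\ref{prp:Sn0} and~\ref{prp:Sn1} via $\cS^+_n\subseteq\cS_n$, and evaluate the surviving hyperplane counts with Theorem~\ref{thm:shifted-lattice-box-count} and Corollary~\ref{cor:Valpha-BJ-approx} on the box $[0,H]^{n-1}$; this is exactly how the paper proceeds, and it accounts for the disappearance of the factor $2^{n-2}$ and of the $\balpha^-_{i_1,i_2}$ terms just as you describe.

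The final paragraph, however, contains a step that fails. The reflection $\nu_i\mapsto 1-\nu_i$ at the indices with $\alpha_i<0$ maps $[0,1]^m$ to itself, but it sends the section $\{\bnu : \balpha\cdot\bnu=r\}$ to a section for the componentwise absolute-value vector at the translated argument $r-\sum_{\alpha_i<0}\alpha_i$; equivalently, the section-volume function $r\mapsto \Vol_{m-1}(\{\bnu\in[0,1]^m:\balpha\cdot\bnu=r\})$ is symmetric about $r=\balpha\cdot\bfone_m/2$, not about $r=0$. The $O(|r|^2)$ estimate in Corollary~\ref{cor:Marichal-Mossinghoff-approx} comes precisely from evenness in $r$ at $r=0$, which is available for $[-1/2,1/2]^m$ but not for $[0,1]^m$, and mixed signs in $\balpha$ do not restore it: for $(\alpha_1,\alpha_2,\alpha_3)=(1,1,-1)$ one computes directly (or from Theorem~\ref{thm:Marichal-Mossinghoff}) that $\Vol_2(\{\bnu\in[0,1]^3:\nu_1+\nu_2-\nu_3=r\})=\frac{\sqrt3}{2}\,(1+2r)+O(r^2)$ near $r=0$, so the deviation from the central section is genuinely linear in $r$. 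Hence your reflection argument cannot produce an $O(s^2)$ bound, and the quadratic-in-$|J|$ error term $O(|J|^2H^{n-4})$ for $k\ge 5$ is not recovered this way.

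For what it is worth, the paper does not attempt such a recovery: in its proof of this proposition the volume approximation on $[0,H]^{n-1}$ is only invoked with the linear error, and the positive-vector counts are recorded with remainder $O(H^{n-3}+|J|H^{n-3})$ (the shape of the $k\le 4$ case of Proposition~\ref{prp:Sn0}), which is what the method genuinely gives and what is used afterwards. So the honest conclusion of your (and the paper's) argument is the statement with the linear-in-$|J|$ error for all $k\ge 4$; if you want the literal $k\ge 5$ error terms of Propositions~\ref{prp:Sn0} and~\ref{prp:Sn1} in the positive setting, a new idea beyond the reflection trick would be needed.
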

\begin{proof}
To obtain a formula for $S_{n, 0}^+(H, J; \balpha)$, we use similar arguments as in the proof of Proposition~\ref{prp:Sn0}. The main differences are we only consider the case $\nu_i=1$ for an index $1\le i\le n$, and we replace the volume-counting expression. In particular, for a fixed $i$, by Theorem~\ref{thm:shifted-lattice-box-count} and Corollary~\ref{cor:Valpha-BJ-approx}, the number of such vectors $\bnu \in \cS_{n,0}^+(H,J;\balpha)$ is 
\begin{align*}
    \#\left\{\bnu \in (\Z \cap (0, H])^{n - 1} : \balpha^*_{i} \cdot \bnu = J - \alpha_{i}\right\}
    &= V_{\balpha^*_{i}}([0,H]^n;J-\alpha_i)+O(H^{n-3}) \\
    &=  
    V_{\balpha^*_{i}}([0,1]^n;0)H^{n-2}+O(H^{n-3}+|J|H^{n-3}),
\end{align*}
when $\balpha_i^*$ also have positive and negative coordinates and $\gcd(\balpha_i^*) \mid J-\alpha_i$. Adding these expressions over all $i$ gives the formula for $S_{n, 0}^+(H, J; \balpha)$.

We now obtain a formula for $S_{n, 1}^+(H, J; \balpha)$. We proceed similarly as in the proof of Proposition~\ref{prp:Sn1}, except that we only consider the case $\nu_{i_1} = \nu_{i_2}$. We then obtain an expression similar to~\eqref{eqn:tpmt-int2}, and adding these expressions over all $i_1, i_2$ gives the formula for $S_{n, 1}^+(H, J; \balpha)$.
\end{proof}

When (without loss of generality) all coordinates of $\balpha$ are positive, we define \begin{align*}
 &C^{+,(0)}_{\balpha;J}\coloneqq \dfrac{1}{(n-2)!} \sum_{i=1}^n  \dfrac{\delta_{\balpha_i^*}(J-\alpha_i) \gcd(\balpha_i^*)}{\prod_{j \ne i} \alpha_j},\\
 &C^{+,(1)}_{\balpha;J}\coloneqq \dfrac{1}{(n-2)!} \sum_{i=1}^n  \dfrac{\delta_{\balpha_{i_1,i_2}^+}(J) \gcd(\balpha_{i_1,i_2}^+)}{ (\alpha_{i_1}+\alpha_{i_2})\prod_{j \ne i_1,i_2} \alpha_j}.
\end{align*}

In this case, we have the following proposition.

\begin{proposition}\label{prp:allpos}
Assume that $\balpha$ has positive coordinates and $J > \max_i \alpha_i$. For $r=0,1$, 
\[ S_{n,r}^+(J;\balpha) = C^{+,(r)}_{\balpha;J} J^{n-2}+O(J^{n-3}). \]
\end{proposition}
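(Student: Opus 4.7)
The plan is to mirror the proof of Proposition~\ref{prp:Snpos}, but to replace the box-intersection volume estimate from Corollary~\ref{cor:Valpha-BJ-approx} with the exact simplex formula of Corollary~\ref{cor:Valpha-BJ-pos}, which is available whenever all coordinates of $\balpha$ are positive. The decisive observation is that every positive-integer solution of $\balpha\cdot\bnu=J$ automatically satisfies $0<\nu_i\le J/\alpha_i\le J$, so Theorem~\ref{thm:shifted-lattice-box-count} applies to the box $[0,J]^n$ with both main and error terms controlled in terms of $J$ alone.

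For $r=0$, a positive rank-zero vector must satisfy $\nu_i=1$ for some index $i$. Fixing such an $i$ reduces the problem to counting positive-integer solutions of $\balpha_i^*\cdot\bnu'=J-\alpha_i$; when $\gcd(\balpha_i^*)\mid J-\alpha_i$, applying Theorem~\ref{thm:shifted-lattice-box-count} together with Corollary~\ref{cor:Valpha-BJ-pos} gives
\[
\frac{\gcd(\balpha_i^*)}{(n-2)!\,\prod_{j\ne i}\alpha_j}(J-\alpha_i)^{n-2}+O(J^{n-3})
=\frac{\gcd(\balpha_i^*)}{(n-2)!\,\prod_{j\ne i}\alpha_j}\,J^{n-2}+O(J^{n-3})
\]
via the binomial expansion, while the contribution is zero when the divisibility fails. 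Summing over $i$ produces the main term $C^{+,(0)}_{\balpha;J}J^{n-2}$; the overcount from vectors with two or more coordinates equal to $1$ is restricted to a codimension-two affine subspace and is therefore $O(J^{n-3})$.

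For $r=1$, since all coordinates are positive, the only tame rank-one pattern is $\nu_{i_1}=\nu_{i_2}$. For each pair $1\le i_1<i_2\le n$ with $\gcd(\balpha_{i_1,i_2}^+)\mid J$, substituting this equality reduces the count to an $(n-1)$-variable positive lattice count with coefficient vector $\balpha_{i_1,i_2}^+$; by Theorem~\ref{thm:shifted-lattice-box-count} and Corollary~\ref{cor:Valpha-BJ-pos} this equals
\[
\frac{\gcd(\balpha_{i_1,i_2}^+)}{(n-2)!\,(\alpha_{i_1}+\alpha_{i_2})\prod_{j\ne i_1,i_2}\alpha_j}\,J^{n-2}+O(J^{n-3}),
\]
and summing over admissible pairs yields $C^{+,(1)}_{\balpha;J}J^{n-2}$.

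The main obstacle will be controlling the remaining rank-one contributions. Vectors that satisfy two distinct pair-relations are absorbed by the codimension-two argument already used in the proof of Proposition~\ref{prp:Sn1}, giving at most $O(J^{n-3}(\log J)^{O(1)})$. The wild subcase $\nu_{i_1}^{t_1}=\nu_{i_2}^{t_2}$ with $\gcd(t_1,t_2)=1$ and $t_1\ne t_2$ forces one coordinate to be a proper perfect power; combined with the bound $t_1,t_2\ll\log J$ from Lemma~\ref{lem:exp}, this yields $O(J^{1/2})$ admissible pairs $(\nu_{i_1},\nu_{i_2})$, each completing to $O(J^{n-3})$ vectors via the hyperplane. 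The delicate step is to verify that these wild contributions, together with the lattice-point remainder from the tame case, assemble into the stated error, matching the global bound required in Theorem~\ref{thm:main-all-positive-integer}.
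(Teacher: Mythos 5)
Your proposal takes essentially the same route as the paper: the paper's own proof is a one-line reduction to Proposition~\ref{prp:Snpos} (i.e.\ to the arguments of Propositions~\ref{prp:Sn0} and~\ref{prp:Sn1} restricted to positive vectors), with the exact simplex volume of Corollary~\ref{cor:Valpha-BJ-pos} replacing the approximation of Corollary~\ref{cor:Valpha-BJ-approx}, and your computation of the main terms for $r=0$ and $r=1$ recovers precisely the constants $C^{+,(0)}_{\balpha;J}$ and $C^{+,(1)}_{\balpha;J}$.

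Concerning the ``delicate step'' you flag: it cannot be closed in the form you (and the proposition) state, but this is a defect of the stated error term rather than of your argument. For $r=1$ the wild pairs $(\nu_{i_1},\nu_{i_2})=(z^{t_2},z^{t_1})$ with $t_1\neq t_2$ contribute genuinely on the order of $J^{n-5/2}$: already for $\balpha=\bone_n$ the vectors $(z,z^2,\nu_3,\dots,\nu_n)$ with the remaining coordinates generic give $\gg J^{n-5/2}$ rank-one vectors not captured by the tame $\nu_{i_1}=\nu_{i_2}$ main term. Hence the best error this method yields for $S^{+}_{n,1}(J;\balpha)$ is $O(J^{n-5/2})$, exactly as in Proposition~\ref{prp:Sn1}, in~\eqref{eqn:Sn+1}, and in Theorem~\ref{thm:main-all-positive-integer}; only the $r=0$ case achieves $O(J^{n-3})$. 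The paper's proof, which simply defers to Proposition~\ref{prp:Snpos}, inherits the same $O(J^{n-5/2})$ term, so you should record the $r=1$ error as $O(J^{n-5/2})$ (your two-distinct-pairs bound $O(J^{n-3}(\log J)^{O(1)})$ is then also absorbed) rather than try to force it into $O(J^{n-3})$; this is all that is needed downstream.
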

\begin{proof} We argue similarly as in the proof of Proposition~\ref{prp:Snpos}, except that we compute $V_{\balpha}([0, cJ]^n; J)$ for $c = \max\{\alpha_i^{-1} : i \leq n\}$ using Corollary~\ref{cor:Valpha-BJ-pos}, rather than approximate using Corollary~\ref{cor:Valpha-BJ-approx}.
\end{proof}

We end this section by noting our arguments can be modified to obtain results where the vector $\balpha$ has exactly one negative (or positive) coordinate, or where its coordinates are all nonnegative with some being zero. However, due to technical considerations for deriving $C^{+,(0)}_{\balpha;J},C^{+,(1)}_{\balpha;J}$ in these cases, we omit the proof.

\section{Comments}\label{sec:remark}

\subsection{On the strength of the results}\label{sec:strength}
As mentioned previously, the order of our asymptotical formulae on $S(H,J;\balpha)$ is expected to be $H^{n-2}$, since the multiplicatively dependence condition and hyperplane condition are of one degree of freedom each. We also note that the main error terms of our formulae are of order $H^{n-5/2}$. Following our argument, especially in the proof of Proposition~\ref{prp:Sn1}, this error term  comes from vectors of rank 1 of the form \[
(\pm x,\pm x^2,\nu_3,\dots,\nu_n).
\] Since there are $H^{1/2}$ choices of $x$ and $H^{n-3}$ choices for the rest of the coordinates, this error term is naturally expected. With some additional arguments, this error term might be made explicit, and one may repeat the same arguments (when $k\ge 5$) for vectors of the form $(\pm x^a,\pm x^b,\dotsc)$, as long as $a,b\ll \log H$. In fact, in this case any contributions of order between $H^{n-3}$ and $H^{n-5/2}$ should only come from vectors of rank $1$, based on Corollary~\ref{cor:Snrupper}. We may compare this result with the results in~\cite[Section~1]{PSSS}, where the error terms in their results only comes from vectors with multiplicative ranks at least two.

We also believe that the log factor of the error terms in Theorems~\ref{thm:main-k>=3} and~\ref{thm:main-k=2} for $2\le k\le 4$ may be removed. In particular, Igor E. Shparlinski noted that this can be done if the curves in Theorems~\ref{thm:bompil-2var}-\ref{thm:bompil-4var} admit no quadratic factor with integer points. In this case, one needs to modify the secondary term of Proposition~\ref{prp:CCDN} to consider integer points from the related cubic curves more carefully. 

We now direct our attention to the constant $C_{\balpha;J}$ in Theorem~\ref{thm:main-k>=3} for the case $k \geq 4$. By observing our arguments in Section~\ref{sec:small-rank} to derive $C_{\balpha;J}$, we may see that the dependence of $J$ only comes from the fact that the corresponding point counting arguments may generate zero solutions depending on the value of $J$ relative to $\balpha$. Therefore, if we can guarantee that all related hyperplane equations admits an integer solution, the constant may be derived independently of $J$. More explicitly, from the proof of Propositions~\ref{prp:Sn0} and~\ref{prp:Sn1}, we may conclude that $C_{\balpha;J}$ is independent of $J$ if and only if all the equations
\begin{align}\begin{split}\label{eqn:alpha-system}
    \balpha_i^*\cdot \bnu = J-\alpha_i,\qquad
    \balpha_i^*\cdot \bnu = J+\alpha_i,\qquad
    \balpha_{i_1,i_2}^+ \cdot \bnu = J,\qquad
    \balpha_{i_1,i_2}^- \cdot \bnu = J,
\end{split}
\end{align}
    with $1\le i,i_1,i_2\le n$ and $i_1<i_2$, admits a solution $\bnu \in \Z^n$.
One instance is when $\balpha$ has three coordinates equal to $\pm 1$, since each $\balpha_i^*$ and $\balpha_{i_1, i_2}^{\pm}$ will have at least one coordinate equal to $\pm 1$.

We proceed to discuss the case where $C_{\balpha;J}$ is zero. From the proof of Propositions~\ref{prp:Sn0} and~\ref{prp:Sn1}, this happens when each equation in~\eqref{eqn:alpha-system} admits no solution. This is trivially true when $\gcd(\balpha) \nmid J$, and thus we will discuss some choices of $\balpha$ and $J$ such that $\gcd(\balpha) \mid J$ but $C_{\balpha;J}=0$.

Consider $n^2$ pairwise distinct primes, say $p_i$ for each $i = 1, 2, \ldots, n$, and $q_{i, j}$ and $r_{i, j}$ for each $1 \leq i < j \leq n$.
By Chinese remainder theorem, there exists an integer vector $\balpha = (\alpha_1, \alpha_2, \ldots, \alpha_n)$ with the following property:
\begin{itemize}
    \item   for each $i \leq n$, we have $\alpha_i \equiv 1 \pmod{p_i}$ and $\alpha_j \equiv 0 \pmod{p_i}$ if $j \neq i$;
    \item   for each $1 \leq i_1 < i_2 \leq n$, we have $\alpha_{i_1} \equiv 1 \pmod{q_{i_1, i_2}}$, $\alpha_{i_2} \equiv -1 \pmod{q_{i_1, i_2}}$, and $\alpha_j \equiv 0 \pmod{q_{i_1, i_2}}$ if $j \neq i_1, i_2$;
    \item   for each $1 \leq i_1 < i_2 \leq n$, we have $\alpha_{i_1} \equiv 1 \pmod{r_{i_1, i_2}}$, $\alpha_{i_2} \equiv 1 \pmod{r_{i_1, i_2}}$, and $\alpha_j \equiv 0 \pmod{r_{i_1, i_2}}$ if $j \neq i_1, i_2$.
\end{itemize}
Then none of the primes $p_i, q_{i_1, i_2}, r_{i_1, i_2}$ divides $\gcd(\balpha)$.
However, $p_i \mid \gcd(\balpha_i^*)$ for each $i \leq n$, and we have $q_{i_1, i_2} \mid \gcd(\balpha_{i_1, i_2}^+)$ and $r_{i_1, i_2} \mid \gcd(\balpha_{i_1, i_2}^-)$ for each $1 \leq i_1 < i_2 \leq n$.
As a result~\eqref{eqn:alpha-system} admits no solution when $J$ is not divisible by any of the $n^2$ distinct primes, and $C_{\balpha;J} = 0$.

We also note that we actually may let $H\ge 1$ in Theorem~\ref{thm:main-k>=3} for $k\ge 4$. The stricter restriction in the theorem statement only come from the case $k=3$, where this condition comes from the argument before~\eqref{eqn:k=3,r=2,balpha}. 

Finally, we discuss our results for the case $k=2$, with fixed $J$ and $H$. With regards to the arguments in Section~\ref{sec:k=2-main}, we need to find a formula for the number $\#(\cS_2'(J;\alpha_{j_1},\alpha_{j_2})\cap [-H,H]^2)$, where the definition can be seen in the corresponding section. In particular, we need to provide a uniform count on the number of solutions to~\eqref{eqn:multdep-k=2} with $|\nu_{j_1}|,|\nu_{j_2}|\le H$. While we are currently unable to obtain a result in this form, we suggest that a strategy similar to the one employed in~\cite[Section 4]{DS2018} may be worth exploring.

\subsection{On the conditions of Theorems~\ref{thm:bompil-2var}-\ref{thm:bompil-4var}}\label{sec:cond-bompil}We first note that our proof actually shows that one may replace the $k_1+k_2+k_3$ (or $k_1+k_2+k_3+k_4$, for Theorem~\ref{thm:bompil-4var}) terms in our results with $\max(k_1+k_2,k_3)$. However, we choose this weaker, but symmetric, bound in our use cases.

We note that all system of equations discussed in these propositions require $J\ne 0$. This condition is neccessary, as seen by the following examples. First, for Theorem~\ref{thm:bompil-2var}, we consider the system of equations
\begin{align*}
    \nu_1\nu_3=\nu_2,\qquad\alpha_1\nu_1+\nu_2=0,
\end{align*}
which admits solutions $(x,-\alpha x, -\alpha)$ for all $1<|x|\le H/\alpha$. Next, for Theorem~\ref{thm:bompil-3var}, we note that the system of equations \begin{align*}
    \nu_1^2\nu_2=\nu_3^3,\qquad
&2\nu_1+3\nu_2-13\nu_3=0
\end{align*}  admit solutions of the form $(x,8x,2x)$ for all integers $1<|x|\le H/8$. 
Finally, for Theorem~\ref{thm:bompil-4var}, we consider the system of equations\begin{align*}
    &\nu_1\nu_3=\nu_2\nu_4,\qquad
    \alpha_1\alpha_4=\alpha_2\alpha_3,\\
&\alpha_1\nu_1+\alpha_2\nu_2+\alpha_3\nu_3+\alpha_4\nu_4=0,
\end{align*} which admits solutions of the form $(\alpha_2x,-\alpha_1x,\alpha_3y,-\alpha_4y)$ for all integers $x,y$ and therefore gives $\gg H^2$ solutions $\bnu\in \Z^4\cap [-H,H]^4$.

Next, we return to Theorem~\ref{thm:bompil-3var} and discuss the neccessity  of the condition $\alpha_1\nu_1\ne J$. This can be seen by considering the system of equations \begin{align*}
    \nu_1\nu_2=\nu_3, \quad
J\nu_1+\nu_2-\nu_3=J,
\end{align*} which admits  $(1,x,x)$ for all integers $0<|x|\le H$ as solutions and therefore more than $H$ solutions in $[-H,H]^3$.

Finally, we discuss the strength of our propositions. First, we expect our error term in Theorem~\ref{thm:bompil-2var} is tight with respect to the main power of $H$ with the following argument. Picking any $|\nu_1| \ll H^{1/2}$ as $H\to \infty$ for~\eqref{eqn:fatal2var-linear}, we may guarantee that $\nu_2 =O(H^{1/2})$. Then we note that the tuples $(\nu_1,\nu_2,\nu_1\nu_2)\in [-H,H]^{3}$  satisfies~\eqref{eqn:fatal2var-1} with $k_1=k_2=k_3=1$. This ensures that the system of equations has $\gg H^{1/2}$ solutions $\bnu$, as $H\to \infty$. A similar construction also exists if we want to replace~\eqref{eqn:fatal2var-1} with~\eqref{eqn:fatal2var-2}.

On another hand, with respect to Theorem~\ref{thm:bompil-3var}, we note that Igor E. Shparlinski, from private communications with the first author, conjectured that the system of equations~\eqref{eqn:fatal1} and~\eqref{eqn:fatal2} should have $H^{o(1)}$ solutions $(\nu_1,\nu_2,\nu_3)\in (\Z\cap [-H,H])^3$ for most choices of $(A,B,\balpha,k_1,k_2,k_3)$. Indeed, if the related affine curve $f$ is irreducible, then one may apply Lemma~\ref{lem:bompil-CCDN} directly to $f$ and obtain that at most $O(H^{1/\max(k_1+k_2,k_3)}\log H)$ integer points lies in $f$, for a fixed $(k_1,k_2,k_3)$. However, proving an arbitrary affine curve $f$ is irreducible is generally hard, especially as $\deg f\to \infty$. In addition, one may also hope to use results on affine dimension growth as seen at~\cite{CCDN,CDHNV,Verm} for bounding the number of vectors of large rank.

\subsection{A correction to Pappalardi-Sha-Shparlinski-Stewart}\label{sec:cor-PSSS}

We use this section to correct a bound of Pappalardi-Sha-Shparlinski-Stewart that arose by substituting $d=1$ to~\cite[Equation (1.17)]{PSSS}. We use this section to correct an erroneous bound reported by Pappalardi, Sha, Shparlinski, and Stewart. The original assertion, obtained from \cite[Equation (1.17)]{PSSS} with $d=1$, incorrectly counted the number of multiplicatively dependent vectors in $\bnu\in (\Z \cap [-H,H])^2$ as $12H+O(1)$  as $H\to \infty$. We acknowledge that the potential error (a typing mistake in their Theorem 1.4) was first noted by Pineda-Ruelas~\cite{P-R} in their review of \cite{PSSS}.

To show that their argument is incorrect, we note that~\cite[Equation (1.17)]{PSSS} was derived by adding~\cite[Equation (4.18)]{PSSS} and~\cite[Equation (4.21)]{PSSS}, which has error terms of order $O(1)$ and $O(\log H)$ respectively. Therefore, the resulting error term be $O(\log H)$ instead of $O(1)$.

We now provide a heuristic on why this error bound cannot be improved. We first note that the main term is correct, as seen from pairs of the form \begin{equation}\label{eqn:n=2}
(x,\pm 1), (\pm1,x), (\pm |x|,\pm |x|) \quad \text{for} \quad 1<|x|\le H.
\end{equation} However, we note that there are at least $\log H/\log 2$ multiplicatively dependent vectors of the form $(2^a,2^b)$ for $1<a\ne b < \log H$. Thus, the number of multiplicatively dependent vectors $\bnu\in (\Z \cap [-H,H])^2$ that are not of form~\eqref{eqn:n=2} is $\gg \log H$. Hence, we conclude there are  \[
12H+O(\log H).
\]multiplicatively dependent vectors $\bnu\in (\Z \cap [-H,H])^2$.

\subsection{Multiplicatively dependent positive integers with fixed sum of coordinates}\label{sec:allone}

As a natural exposition for our result, this section discusses counting $S_{n}^+(H,J;\bfone_n)$, where $\bfone_n$ is the all-one vector. For simplicity, we denote 
\[ S_n^+(J)=S_n^+(H,J;\bfone_n) \qquad \text{and} \qquad S_{n,r}^+(J)=S_{n,r}^+(H,J;\bfone_n). \]  Now, counting $S_n^+(J)$ corresponds to counting solutions $\bnu\in \Z_+^n$ to \begin{equation}\label{eqn:sumcoordinates}
\nu_1+\dots+\nu_n=J
\end{equation} such that $\bnu$ is multiplicatively dependent. In this perspective, computing $S_n^+(J)$ is equivalent to counting multiplicatively dependent positive integer vectors whose coordinates sum to $J$, or counting compositions of $J$ as multiplicatively dependent tuples.  It is easy to see that
\[ C^{+,(0)}_{\bfone_n; J} = \frac{n}{(n - 2)!} \quad \text{and} \quad C^{+,(1)}_{\bfone_n; J} = \frac{n(n - 1)}{4(n - 2)!}, \]
    where $C^{+,(0)}_{\bfone_n; J}$ and $C^{+,(1)}_{\bfone_n; J}$ are as defined in Section~\ref{sec:positive}.
Thus, applying Theorem~\ref{thm:main-positive-integer} gives
\begin{equation}\label{eqn:bone}
    S_n^+(J) = \frac{n(n+3)}{4(n-2)!}J^{n-2} + \begin{cases}
         O(J^{n-5/2}(\log J)^{16}) &\quad\text{for} \quad 3\le n \le 4,\\
         O(J^{n-5/2}) &\quad\text{for}\quad n\ge 5.
     \end{cases}
\end{equation}
We now discuss an alternative combinatorial proof for~\eqref{eqn:bone}. Following Corollary~\ref{cor:Snrupper}, we only need to compute $S_{n,r}^+(J)$ for $r=0,1$.

First, to compute $S_{n,0}^+(J)$, we need to count the number of solutions to~\eqref{eqn:sumcoordinates} with  at least one of its coordinates is equal to $1$. We note that we may choose the coordinates in $n$ ways, and the remaining $n-1$ terms would sum to $J-1$. Then, we obtain
\[ \binom{J - 2}{n - 2} = \dfrac{1}{(n-2)!}J^{n-2} + O(J^{n-3}) \]
possible vectors $\bnu$ for each fixed coordinate. On the other hand, by principle of inclusion-exclusion, there are $O(J^{n-3})$ vectors that contains at least two coordinates equal to $1$. Therefore, by considering each fixed coordinates, we obtain 
    \begin{equation}\label{eqn:Sn+0} S_{n,0}^+(J) = \frac{n}{(n - 2)!} J^{n - 2} + O(J^{n - 3}). \end{equation}
    
Next, we compute $S_{n,1}^+(J)$. Following similar arguments from Proposition~\ref{prp:Sn1}, we obtain that the main term of this expression comes from vectors of~\eqref{eqn:sumcoordinates} such that it contains (at least) two identical coordinates and all of the coordinates are bigger than $1$. Without loss of generality, let $x_{n-1}=x_n$.
Then, our problem is counting positive integer solutions to \begin{equation}\label{eqn:sumcoordinates-rank1}
x_1+\dots+x_{n-2}+2x_{n-1}=J-n.
\end{equation}
Now, for a fixed $x_{n-1}=k$, the number of solutions to~\eqref{eqn:sumcoordinates-rank1} is exactly
$\displaystyle \binom{J - n - 2k - 1}{n - 3}$.
Therefore, as $k$ ranges over all possible values, the number of solutions to~\eqref{eqn:sumcoordinates-rank1} is \begin{align*}
    \sum_{k \ge 1} \binom{J - n - 2 k - 1}{n - 3}
    &= \frac{1}{2} \left[ \sum_{k \ge 3} \binom{J - n - k}{n - 3} + \sum_{k \ge 3} (-1)^{k - 1} \binom{J - n - k}{n - 3} \right] \\
    &=  \frac{1}{2}\left[ \binom{J - n -2}{n - 2} + \sum_{m \ge 2} \left( \binom{J - n - 2m + 1}{n - 3} - \binom{J - n - 2m}{n - 3} \right) \right]\\
    &= \frac{1}{2} \left[ \binom{J - n - 1}{n - 2} + \sum_{m \ge 2} \binom{J - n - 2m}{n - 4} \right] \\ 
    &= \frac{1}{2(n - 2)!} J^{n - 2} + O(J^{n - 3}) .
\end{align*}
Noting there are $\binom{n}{2}$ possible choices for the pair of coordinates, and following the same reasoning as the proof of Proposition~\ref{prp:Sn1}, we obtain     \begin{equation}\label{eqn:Sn+1} 
S_{n,1}^+(J)= \frac{n(n-1)}{4(n - 2)!} J^{n - 2} + \begin{cases}
         O(J^{n-5/2}(\log J)^{16}) &\quad \text{for} \quad 3\le n \le 4,\\
         O(J^{n-5/2}) &\quad\text{for}\quad n\ge 5.
     \end{cases} .
\end{equation}
Combining~\eqref{eqn:Sn+0},~\eqref{eqn:Sn+1} and Corollary~\ref{cor:Snrupper} completes the proof of~\eqref{eqn:sumcoordinates}.

We now consider a problem similar to the one in the previous subsection. This problem involves counting partitions of $J$ as $n$ multiplicatively dependent parts, denoted as $p^{md}(J)$. 
Similarly as in the previous section, we consider the ``multiplicative rank" of the related partition by using a similar definition as in Definition~\ref{def:multrank}. Using similar reasoning as in the previous section, we may conclude there are $O(J^{n-3})$ multiplicatively dependent partitions of rank more than $1$. Therefore, we may now count multiplicatively dependent partitions of rank $0$ and $1$, denoted as $p^{md}_{n,0}(J,n)$ and $p^{md}_{n,1}(J,n)$, respectively.

To compute $p^{md}_{n,0}(J,n)$, we note that this problem is equivalent to counting the number of partitions of $J$ such that at least one of its parts equal to $1$. Such partition corresponds to a partition of $J-1$ to $n-1$ parts, which implies \[
p^{md}_{n,0}(J,n)=p_{n-1}(J-1),
\] where $p_k(N)$ denotes the number of partitions of $N$ to $k$ parts.

Next, to compute $p^{md}_{n,1}(J,n)$, we first note that the number of partitions of rank $1$ without equal coordinates or more than two equal coordinates is at most $O(J^{n-5/2})$. Therefore, we may now count partitions of $J$ to $n$ parts of rank $1$ with equal coordinates. Using similar arguments as in~\eqref{eqn:sumcoordinates-rank1}, the number of such partitions is exactly
$ \displaystyle \sum_{a \ge 1} p_{n-2}(J - n - 2a)$,
which implies \[
p_{n}^{md}(J)= p_{n-1}(J-1)+\sum_{a \ge 1} p_{n-2}(J - n - 2a)+O(J^{n-5/2})
\]
Similar question may also be asked for the number of multiplicatively dependent partitions of $J$, which is  $\displaystyle \sum_n p_{n}^{md}(J)$.

To end this discussion, we note that Knessl and Keller~\cite{KK} claimed
\[ p_k(n) = \frac{1}{k! (k - 1)!} n^{k - 1} + O(n^{k - 2}), \]
however as noted by Andrews in the Mathematical Review~\cite{And}, this result only holds ``formally"; i.e. this formula does not hold asymptotically. The reader may consult Yavuz Oru\c{c}~\cite{YO} for the  best known upper and lower bounds for $p_k(n)$.

\subsection{Further directions}\label{sec:remark-further} \subsubsection{Multiplicatively dependent vectors of small dimensions}\label{sec:small-dim}We now discuss related questions for $n=2,3$. This project was initially inspired by the first author's reading of~\cite{PSSS} for the case $n=3$, which results in a problem for the Indonesian national high-school mathematical olympiad (Olimpiade Sains Nasional) in 2024. The  following translation is taken from~\cite{AOPS}.

\begin{question}[Question 2, Olimpiade Sains Nasional 2024, mathematics national round for high school students]\label{que:fatal}
    The triplet of positive integers $(a,b,c)$ with $a<b<c$ is called a \textit{fatal triplet} if there exist three nonzero integers $p,q,r$ which satisfy the equation $a^p b^q c^r = 1$.
As an example, $(2,3,12)$ is a fatal triplet since $2^2 \cdot 3^1 \cdot (12)^{-1} = 1$.
The positive integer $N$ is called \textit{fatal} if there exists a fatal triplet $(a,b,c)$ satisfying $N=a+b+c$.
\begin{itemize}
    \item Prove that $16$ is not fatal.
\item Prove that all integers bigger than $16$ which are not an integer multiple of $6$ are fatal.
\end{itemize}
\end{question}
The term ``fatal triplet", coined by the first author, was  inspired by the song ``Fatal" from GEMN.\footnote{The song was the opening song for the second season of the Oshi no Ko anime that premiered in July 2024.
The authors would like to thank Aka Akasaka, Oshi no Ko anime series, Dogakobo and GEMN for providing inspirations that led the first author to propose Question~\ref{que:fatal} which kickstarted this work. However, the authors concur that the manga was ended unsatisfactorily, to which they would not like to thank Aka Akasaka. In addition, all authors have agreed that Akane Kurokawa is the best girl character of the franchise. } 

The first part of Question~\ref{que:fatal} can be solved by bruteforcing all possible triplets $(a,b,c)$ with $a+b+c=16$. For the second part, we consider the following constructions:
\begin{align*}
    6k+2&=2+2k+4k, & 2^1(2k)^1(4k)^{-1}&=1,\\
    6k+4&=4+2k+4k, & 4^1(2k)^2(4k)^{-2}&=1,\\
    4k+3&=3+k+3k, & 3^1(k)^1(3k)^{-1}&=1,\\ 4k+9&=9+k+3k, & 9^1(k)^2(3k)^{-2}&=1, \quad \text{for} \quad k\neq 3,9.
\end{align*}
    For the remaining case ($4k+9$ with $k=3,9$), we consider
\begin{align*}
    21&=1+4+16=3+6+12, & 1^14^216^{-1}=3^16^{-2}12^1&=1, \\
    45&=6+12+27=3+12+24, & 6^6 12^{-3} 27^1 = 9^1 12^{6}24^{-4}&=1.
\end{align*} 
However, the authors are still unsure about the existence of fatal triples for $6 \mid N$.

With respect to our work, we note that all vectors $\bnu\in \Z^3$ of rank $2$ are fatal triplets. However, not all fatal triplets have rank $2$, as demonstrated by $(1,4,16)$. In addition, our bound in~\eqref{eqn:k=2, r=2} only shows that there is $O(H^{1/2}(\log H)^{9})$ such triplets of rank $2$ with all coordinates in $\bnu$ between $1$ and $H$. 
Therefore, it is natural to pose the following relevant question, which can also be asked for bigger dimensions. \begin{question}
    For an $\balpha \in \Z^3$ with nonzero coordinates and $J\in \Z$, does there always exist $\bnu\in \Z^3$ of rank $2$ with $
    \alpha_1\nu_1+\alpha_2\nu_2+\alpha_3\nu_3=J
    $?
\end{question}

We also discuss some related results to Lemma~\ref{lem:extra-count-k=2}.
First, Dubickas and Sha~\cite[Section~4]{DS2018}, who considered the case $(\alpha,\beta)=(1,1)$, used results from Bugeaud and Luca~\cite{BL2006} to obtain several results on this setup. 
In particular, with our notation, they proved that under the ABC conjecture~\cite[Theorem~4.8]{DS2018}, when $\balpha=(\pm1,\pm1)$, the number of multiplicatively dependent vectors $\bnu = (\nu_1,\nu_2)\in \Z^2$ with $\balpha\cdot \bnu=J$ is bounded above by an absolute constant $C$. Furthermore, for large enough $J$ one may take $C=9$. However, their unconditional result~\cite[Theorem~4.4]{DS2018} only implies that the number of solutions is $O(2^{\omega(J)})$, where $\omega(n)$ denotes the number of distinct prime factors of $n$. For general $\balpha$, one may hope that Scott and Styer's result~\cite{SS} on Diophantine equations of the form $\pm ra^x\pm sb^y=c$ be useful in this case.

Finally, we note that several other questions on linear translations of multiplicatively dependent vectors have been discussed in the literature, mostly for the case $n=2,3$.  For example, for the case $n=3$, the works~\cite{BIV,VZ} provides result on the finiteness of multiplicatively dependent tuples $(a,b,c)$ such that $(a+1,b+1,c+1)$ and $(a+2,b+2,c+2)$ are also multiplicatively dependent. Moreover, for a fixed $a$, the number of pairs $(b,c)$ satisfying the above condition with $a<b<c$ can be bounded effectively with respect to $a$.

More is known for the same problem when $n=2$,  thanks to LeVeque~\cite{LV} and Mihailescu~\cite{Mih}. We refer the reader to~\cite{VZ} for more discussions in this case. In addition, for algebraic integers one may want to consult~\cite{BMZ99,BMZ06,BMZ03,Maur}, as mentioned in the next section.

\subsubsection{Further extensions}\label{sec:further-ext}
A natural generalisation, following~\cite{PSSS}, is counting vectors on a hyperplane whose coordinates are algebraic integers or numbers of fixed degree, or within a fixed number field, and bounded height. Our argument in Corollary~\ref{cor:Snrupper}, which is an extension of~\cite[Propositions 1.3 and 1.5]{PSSS} works well to obtain a similar result for bounding vectors of large multiplicative rank. However, in order to obtain asymptotic formulae for this setup, one may hope to derive a volume-counting formula as seen in  Section~\ref{sec:prelim-lat}. In addition, for small $k$, one also needs an analogue of Bombieri-Pila results for number fields, as seen at Section~\ref{sec:bompil}.

A particularly interesting setup in this direction is when $\bnu\in \Q^n$. In this case, we count the number of multiplicatively dependent vectors $\bnu$ on a fixed hyperplane and whose coordinates are Farey fractions $a_i/b_i$ of height at most $H$. We may obtain bounds on the hyperplane equation based on Shparlinski's result~\cite{Shp} on counting Farey fractions on a hyperplane. However, it remains to be seen whether combining this bound with the multiplicative relation will yield new results. Similar questions arise if we replace Farey fractions with Egyptian fractions, where one uses the result from the work of the first author with Kuperberg, Ostafe and Shparlinski~\cite[Lemma 3.5]{AKOS} to obtain the preliminary bound.

Another possible direction with respect to the propositions in Section~\ref{sec:bompil} is to count the number of solutions to the related equations for fixed exponents $k_1$, $k_2$, $\dotsc$, $k_n$. One might be able to use~\cite{AC} to count the number of solutions to the related multiplicative equation and combine this with the hyperplane (or other varieties) equation. We also note that~\cite{BMZ99} consider this problem for $n=2$ over more general settings, namely a curve $C\in \G_{m}^n$ with the family of proper algebraic subgroups of  $\G_m^n$, which was further generalised at~\cite{BMZ03,BMZ06,Maur}.

Yet another possible direction is to work with more general varieties. New results could be obtained by applying our methods to varieties arising from intersections of hyperplanes. Nevertheless, attempting a general formulation in this setup would lead to technical complications, especially while considering the related coefficients of the hyperplanes.
For varieties of higher degree, one may replace Bombieri-Pila's counting arguments with arguments based on Schwartz-Zippel principles, which may possibly give new results. Bombieri, Masser and Zannier's 2008 result~\cite{BMZ08} should also be useful in this case.

\section*{Acknowledgements}

The authors would like to thank Alina Ostafe, Igor E. Shparlinski and Jerry Wang for supports and suggestions during the preparation of this work. The authors would also like to thank Chip Corrigan and Ingrid Vukusic for helpful discussions related to the multiplicative equation. The authors would also like to thank Anders Mah for his suggestion to improve Proposition~\ref{prp:CCDN}. MA is supported by  Australian Research Council Grants  DP230100530 and a UNSW Tuition Fee Scholarship.

\end{document}